\newtheorem{thm}{Theorem}[section]
\newtheorem{lem}[thm]{Lemma}
\newtheorem{prop}[thm]{Proposition}
\newtheorem{dfthm}[thm]{Definition / Theorem}
\newtheorem{cor}[thm]{Corollary}
\newtheorem{df}[thm]{Definition\rm}
\newtheorem{rem}{\it Remark\/}
\def\C{{\mathbb C}}  
\def\deg  {{\rm deg}}
\def\E{{\mathbb E}}    
\def\H {\mathbb H} 
\def\Hom{\operatorname{Hom}} 
\def\id{\operatorname{id}}  
\def\ker{\operatorname{ker}}  
\def\M {\mathbb M}   
\def\pr{\operatorname{pr}} 
\def\Q{{\mathbb Q}}   
\def\R{{\mathbb R}}    
\def\W{{\mathbb W}}    
\def\Z{{\mathbb Z}}    
\def\N{{\mathbb N}}    
\def\supp{\operatorname{supp}}  
\def\og{\leavevmode\raise.3ex\hbox{$\scriptscriptstyle\langle\!\langle$~}}
\def\fg{\leavevmode\raise.3ex\hbox{~$\!\scriptscriptstyle\,\rangle\!\rangle$}}
\begin{document}
\title{\bf (Co)monads in Free Probability Theory}
\author
{Roland Friedrich\thanks{Saarland University, friedrich@math.uni-sb.de. Supported by the  ERC advanced grant ``Noncommutative distributions in free probability". 
}}

\maketitle
\date{}
\begin{abstract}
We discuss free probability theory and free harmonic analysis from a categorical perspective. In order to do so, we extend first the set of analytic convolutions and operations and then show that the comonadic structure governing free probability is isomorphic to several well-known categories of algebras, such as, e.g., Witt vectors, differential algebras, etc. Within this framework moment-cumulant formulae are shown to correspond to natural transformations and not to be exclusive to probability theories. Finally, we start to discuss free probability and in particular free harmonic analysis from the point of view of algebraic theories and operads.
\end{abstract}

\tableofcontents
\section{Introduction}
If one is dealing with maps which are incompatible with a given algebraic structure such as, e.g. linear vs. multiplicative, ``a map" vs. a linear one or more general situations, there is, a priori, no information how to express the value of these maps on combined quantities from the values on the constituents themselves. In the simplest case this means, e.g.:
\begin{eqnarray*}
f(x+y)&=&?\\
f(x\cdot y)&=&?
\end{eqnarray*}
However, it happens that some kind of ``independence" is present which permits one to express the value on combined quantities in terms of its base constituents and the values on them. This situation is common in probability theory, both commutative and non-commutative, but it appears in other contexts as-well. Let us consider Joyal's $\delta$-rings~\cite{Joy1} as a prototypical example. Let $p$ be a fixed prime number and $q:=p^n$, $n\in\N^*$.
\begin{df}[\cite{Joy1}]
Let $A\in\mathbf{cAlg}_k$. A {\bf $\delta$-ring} is a pair $(A,\delta)$ where $\delta:A\rightarrow A$ is a map which satisfies the following identities:
\begin{eqnarray*}
\delta(1)&=&0,\\
\delta(x+y)&=&\delta(x)+\delta(y)-\sum_{i=1}^{q-1}\frac{1}{p} {{q}\choose{i}}x^iy^{q-i},\\
\delta(x\cdot y)&=&x^q\delta(y)+\delta(x)y^q+p\delta(x)\delta(y).
\end{eqnarray*}
\end{df} 
Further, if $q=p$ then the corresponding $\delta$-ring is isomorphic to the ring of Witt vectors on $A$ as has been shown in~\cite{Joy1}.

In~\cite{FMcK2011,FMcK2012} J.~McKay and the author, established a surprising connection of free probability theory and free harmonic analysis with the ring of Witt vectors and complex cobordism, motivated by the relations J.~McKay suggested Free Probability should have with Monstrous Moonshine~\cite{FrdMcK}.

Free harmonic analysis originates in the work of D.V. Voiculescu~\cite{V1985,V1986,VDN} and it develops a framework which parallels the theory in classical probability, however with the notion of independence replaced by freeness. Subsequently,  H. Berkovici and D.V. Voiculescu~\cite{BV} further developed the underlying complex analytic theory and then H. Berkovici and V. Pata~\cite{BPB} established with the so-called {\em Berkovici-Pata bijection}~\cite{BPB} the connection, both at the algebraic and analytic level,  with classically infinitely divisible probability measures. R. Speicher and A. Nica~\cite{NS} on the other hand treated infinitely divisible distributions and limit theorems with their methods based on free cumulants and cumulants of operators on Fock spaces. 

The link between the theory of Witt vectors and free probability theory~\cite{FMcK2012} led to a formula, a logarithm, which linearises Voiculescu's celebrated $S$-transform~\cite{V1987} but also to the introduction of an additional convolution which is, a priori,  
neither given by the addition nor the multiplication of the underlying algebra of random variables. Notably, in~\cite{FMcK2013a} it was shown that in higher dimensions the free multiplicative convolution can not be linearised and the algebraic groups related to it determined, as pro-unipotent or Borel groups. 

Previously and independently, M. Mastnak and A. Nica~\cite{MN} showed, by using Lie theoretic methods, that the one-dimensional $S$-transform can be linearised. The connection between the additive and multiplicative free convolution which was obtained at the algebraic level in~\cite{MN,FMcK2012} was subsequently studied with analytic methods by G. Cébron~\cite{C2014}. He  investigated a homomorphism of exponential type between freely infinitely divisible distributions and also related it to a matricial model he introduced. 

M. Anshelevich and O. Arizmendi~\cite{AA2016} then considered general exponential homomorphisms in non-commutative probability and discussed the relation their map has with the versions in~\cite{MN,FMcK2012,C2014}.

Here we continue the investigation Witt vectors have with free harmonic analysis both at the algebraic and analytic level. In particular we considerably extend our first analytic results in~\cite{FMcK2013a} to the whole set of freely infinitely divisible probability measure with compact support. 

The content of the paper is summarised in the Table of Contents.

\section{Algebraic Aspects of Free Probability}
In this section we discuss the algebraic properties of the groups, which originally appeared in a series of articles by Voiculescu~\cite{V1985,V1986,V1987} as convolution semi-groups for the addition and multiplication of free random variables. Their analogs (and generalisations) have been obtained by combinatorial methods by Nica and Speicher~\cite{NS}, based on the notion of free cumulants, as introduced by Speicher~\cite{S1997}. 
General background information on free probability can be obtained from the monographs~\cite{NS,VDN}. Let us point out, that here we focus on the one-dimensional case.
\subsection{The category of algebraic probability spaces and freeness}
Let $k$ be a field of characteristic zero, and $\mathbf{Alg}_k$ the category of associative $k$-algebras, not necessarily unital, and $\mathbf{cAlg}_k$ the associative and commutative unital $k$-algebras. For $A\in\mathbf{cAlg}_k$, we denote by $A^{\times}$ its group of units, i.e. the set of invertible elements with respect to the multiplication. Next, we introduce the category of algebraic $k$-probability spaces. 
\begin{df}
Let $A\in\mathbf{Alg}_k$, with unit $1_A$, and let $\phi:A\rightarrow k$ be a pointed $k$-linear functional with $\phi(1_A)=1_k$. The pair $(A,\phi)$, is called a {\bf non-commutative $k$-probability space}, and the elements $a\in A$ the {\bf random variables}.
\end{df}
The algebraic probability spaces over $k$ form a category, which we denote by $\mathbf{Alg_kP}$. The subcategory of commutative algebraic probability spaces is denoted by $\mathbf{cAlg_kP}$. The full subcategory of $C^*$-probability spaces $\mathbf{C^*Alg_{\C}P}$, has a objects pairs $(A,\varphi)$, where $A$ is a $C^*$-algebra and $\phi$ a state.
\begin{df}
The {\bf law} or {\bf distribution} of a random variable $a\in A$, is the $k$-linear functional $\mu_a\in k[[x]]$,
given by $\mu_a(x^n):=\phi(a^n)$, $n\in\N$.
The coefficients $(m_n(a))_{n\in\N}$ of the power series $\mu_a$ are the {\bf moments} of $a$.
\end{df}

\begin{rem}
Let us briefly discuss a kind of ``Taylor approximation" for random variables. 
For $\mathfrak{m}:=A\setminus\{k1_A\}$ consider the decreasing filtration
$$ 
A\supset\mathfrak{m}\supset\mathfrak{m}^2\supset\mathfrak{m}^3\supset\dots.
$$
of subsets (ideals). Then, by restricting $\phi$ onto $\mathfrak{m}^n$, $n\in\N^*$, and, in particular, to the purely diagonal sets, we obtain higher order, localised information. \end{rem}

\begin{df}[Freeness]
Let $(A,\phi)\in\mathbf{Alg_kP}$ and $I$ an index set.  A family of sub-algebras $(A_i)_{i\in I}$, with $1_A\in A_i\subset A$, $i\in I$, is called {\bf freely independent} if for all $n\in\N^*$, and $a_k\in {A_{i_k}\cap\ker(\phi)}$, $1\leq k\leq n$, also
$$
a_1\cdots a_n\in{\ker(\phi)},
$$
holds, given that $i_k\neq i_{k+1}$, $1\leq k< n$.
\end{df}
One should note that the requirement for freeness is that {\em consecutive indices} must be distinct, thus $i_j=i_{j+2}$ is possible.

Let $I_{\phi}\subset\ker(\phi)$ be the largest two-sided ideal contained in the kernel of $\phi$, which is a linear subspace of $A$. Then all the elements of $I_{\phi}$ are free but have trivial moments. Hence, the relevant object for free probability is
the linear quotient space $\ker(\phi)/I_{\phi}$. 

The comparison with classical probability is illustrative.
\begin{df}[Independence]
Let $(A,\phi)\in\mathbf{cAlg_kP}$ be a commutative algebraic probability space, and $I$ an index set.  A family of sub-algebras $(A_i)_{i\in I}$, with $1_A\in A_i\subset A$, $i\in I$, is called {\bf independent} if for all $n\in\N^*$, and $a_k\in A_{i_k}$, $1\leq k\leq n$, with $A_{i_k}\neq A_{i_{k'}}$ for $k\neq k'$ (i.e. for mutually different sub-algebras )
$$
\phi(a_1\cdots a_n)=\prod_{i=1}^n \phi(a_i),
$$
holds.
\end{df}

In classical probability, independence of random variables corresponds to those elements for which the expectation, a linear functional, behaves like a character. 
In free probability those elements of the kernel are of importance which behave like elements of an algebra. 

\subsection{The affine group schemes related to free additive and multiplicative convolution}
Voiculescu~\cite{V1985,V1986,V1987} proved that for a pair of free random variables $\{a,b\}$ in a non-commutative probability space, with distributions $\mu_a$ and $\mu_b$, respectively, $\mu_{a+b}$ and $\mu_{ab}$ depend only on $\mu_a$ and $\mu_b$. 
\begin{itemize}
\item The distribution of $\mu_{a+b}$ is called the {\bf additively free convolution} of $\mu_a$ and $\mu_b$, and it is denoted by $\mu_a\boxplus\mu_b$.
\item The distribution of $\mu_{ab}$ is called the {\bf multiplicatively free
convolution} of $\mu_a$ and $\mu_b$, and it is denoted by $\mu_a\boxtimes\mu_b$.
\end{itemize}

In fact, the binary operations $\boxplus$ and $\boxtimes$, as induced by the sum and the product of free random variables, can be expressed by {\bf universal polynomials with integer coefficients}, and therefore these operations can be further investigated by algebraic methods.

Let us consider the following $k$-functors from $\mathbf{cAlg}_k$ to $\mathbf{Set}$, 
\begin{eqnarray*}
\mathbb{A}^{\N^*}(A) & := & \{(a_1,a_2, a_3,\dots)~|~a_i\in A, i\geq 1\},\\
\mathfrak{G}(A) & := & \{(a_1,a_2, a_3,\dots)~|~a_1\in A^{\times},  a_i\in A, i\geq 2\},\\
\mathfrak{G}_+(A) & := & \{(1_A,a_2,a_3,\dots)~|~a_i\in A, i\geq 2\},
\end{eqnarray*}
and their finite-dimensional projections for $n\in\N^*$,
\begin{eqnarray*}
\mathbb{A}^n(A) & := & \{(a_1,\dots,a_n)~|~a_i\in A, 1\leq i\leq n\},\\
\mathfrak{G}_n(A) & := & \{(a_1,\dots, a_n)~|~a_1\in A^{\times}, a_i\in A, 2\leq i\leq n\},\\
(\mathfrak{G}_+)_n(A) & := & \big\{(1_A,a_2,\dots,a_n)~|~a_i\in A, 2\leq i\leq n\}.
\end{eqnarray*} 
We note, that $\mathbb{A}^{\N^*}(A)$, i.e. the affine $\N^*$-space over $k$, $\mathfrak{G}_+(A)$ and $(\mathfrak{G})_n(A)$ are affine varieties. 

Let us restate the following results of Voiculescu in the language of formal groups.
\begin{prop}[\cite{V1985, V1986}]
\label{PropVRtrafo}
The additive free convolution $\boxplus$, defines an infinite {\bf commutative formal group law} over $\Z$ which is homogeneous, i.e.  there exists a countable sequence of polynomials $P_{\boxplus}=P=(P_1,P_2,P_3,\dots)$, $P_i\in\Z[x_1,\dots,x_n,y_1,\dots,y_n]$ such that
\begin{itemize}
\item $P_1(x_1,y_1)=x_1+ y_1,$
\item $P_n(x,y)=P_n(y,x)$, for all $n\in\N^*$, (commutative, symmetric)
\item $P_n(x_1,\dots, x_n, y_1,\dots, y_n)=x_n+y_n+\tilde{P}_n(x_1,\dots,x_{n-1},y_1,\dots,y_{n-1})$, (homogeneous of degree $n$ if $\operatorname{deg}(x_i):=\operatorname{deg}(y_i):=i$)
\item $\underline{0}=(0_n)_{n\in\N^*}$ is the neutral element.
\end{itemize}
\end{prop}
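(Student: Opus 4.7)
The plan is to construct the polynomials $P_n$ from the free cumulants and to deduce the formal group structure from the linearization property of Voiculescu's $R$-transform. Recall that to every distribution $\mu$ one associates a sequence of free cumulants $(\kappa_n(\mu))_{n\in\N^*}$ related to the moments by the Speicher moment-cumulant formula
\[
m_n(\mu) = \sum_{\pi\in NC(n)} \prod_{B\in\pi} \kappa_{|B|}(\mu),
\]
summed over non-crossing partitions of $\{1,\dots,n\}$. Since every coefficient on the right-hand side equals $1$, M\"obius inversion on the lattice $NC(n)$ (whose M\"obius function is $\Z$-valued) expresses $\kappa_n$ as an integer polynomial in $m_1,\dots,m_n$ with leading term $m_n$. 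The $R$-transform linearization reads $\kappa_n(\mu\boxplus\nu)=\kappa_n(\mu)+\kappa_n(\nu)$; substituting this into the formula above and re-expressing the cumulants in terms of moments yields, for each $n$, a polynomial $P_n\in\Z[x_1,\dots,x_n,y_1,\dots,y_n]$ such that
\[
m_n(\mu\boxplus\nu)=P_n\bigl(m_1(\mu),\dots,m_n(\mu),m_1(\nu),\dots,m_n(\nu)\bigr).
\]

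The asserted properties then all fall out of this construction. Since $\kappa_1=m_1$, one reads off $P_1(x_1,y_1)=x_1+y_1$. Symmetry $P_n(x,y)=P_n(y,x)$ is inherited from the commutativity of cumulant addition. In the grading with $\operatorname{deg}(x_i)=\operatorname{deg}(y_i)=i$, a partition $\pi\in NC(n)$ contributes a monomial of weighted degree $\sum_{B\in\pi}|B|=n$, so $P_n$ is homogeneous of weighted degree $n$. The one-block partition $\pi=\{\{1,\dots,n\}\}$ contributes exactly $\kappa_n(\mu)+\kappa_n(\nu)=m_n(\mu)+m_n(\nu)+(\text{terms of lower weight})$, while every other $\pi\in NC(n)$ has all blocks of size $<n$ and thus contributes products of cumulants $\kappa_{|B|}(\mu),\kappa_{|B|}(\nu)$ with $|B|<n$, i.e.\ polynomials in $m_1(\mu),\dots,m_{n-1}(\mu),m_1(\nu),\dots,m_{n-1}(\nu)$. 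This gives the desired decomposition $P_n=x_n+y_n+\tilde P_n(x_1,\dots,x_{n-1},y_1,\dots,y_{n-1})$. The Dirac mass $\delta_0$ has all moments and cumulants zero, so $\underline 0$ is neutral; associativity of the formal group law is inherited from the associativity of $\boxplus$; and the triangular structure $x_n+y_n+\tilde P_n$ allows one to solve recursively for formal inverses, making $(\mathbb{A}^{\N^*},\boxplus)$ a commutative formal group over $\Z$.

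The main technical obstacle is precisely the integrality of the $P_n$: the analytic approach via the $R$-transform only produces coefficients in $\Q$ (or in the ground field of the algebraic probability space). The essential input is therefore the combinatorial moment-cumulant picture via non-crossing partitions, whose structure constants and whose M\"obius function are integer-valued. With this in hand, all remaining verifications are routine algebraic manipulations.
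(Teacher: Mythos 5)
Your argument is correct, and it is worth noting that the paper itself gives no proof of this proposition at all: it is presented as a restatement of Voiculescu's results \cite{V1985,V1986} in the language of formal group laws, so the implicit route is Voiculescu's original one, in which $\boxplus$ is analysed through the $R$-transform (equivalently the Cauchy transform and its compositional inverse) and the universal polynomials arise from the resulting power-series identities, with integrality traced back to the combinatorial expansion of mixed moments of free elements. You instead take the Nica--Speicher route: define the $P_n$ by passing through free cumulants, using that the moment-to-cumulant and cumulant-to-moment maps are triangular integer polynomial substitutions (the M\"obius function of $NC(n)$ being $\Z$-valued) and that cumulants add under $\boxplus$. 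This buys you exactly the point the proposition emphasises -- coefficients in $\Z$, the symmetric and homogeneous structure, and the triangular form $P_n=x_n+y_n+\tilde P_n$ -- as immediate consequences of one coordinate change, rather than having to extract them from the analytic $R$-transform formalism, which naively only lives over the ground field. The one ingredient you import without proof is the additivity $\kappa_n(\mu\boxplus\nu)=\kappa_n(\mu)+\kappa_n(\nu)$ (equivalently the vanishing of mixed free cumulants of free variables); that is legitimate here, since the proposition is itself an attributed restatement, but you should cite it explicitly (e.g.\ \cite{S1997} or \cite{NS}, Lecture 12) as the theorem doing the real work. A small stylistic remark: associativity and the existence of inverses are cleaner if phrased as you hint at the end -- the cumulant coordinates conjugate $\boxplus$ to componentwise addition via an integral, triangular, invertible polynomial substitution, so the group axioms hold identically as polynomial identities over $\Z$, not merely on moment sequences of actual measures.
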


\begin{prop}[\cite{V1987}]
\label{PropVStrafo}
The multiplicative free convolution $\boxtimes$, defines an infinite {\bf commutative algebraic group law} over $\Z$,  i.e. there exists a countable sequence of polynomials   $Q_{\boxtimes}=Q=(Q_1,Q_2,Q_3,\dots)$, $Q_n\in\Z[x_1,\dots,x_n,y_1,\dots, y_n]$ such that: 
\begin{itemize}
\item $Q_1(x_1,y_1)=x_1\cdot y_1$
\item $Q_n(x,y)=Q_n(y,x)$ for all $n\in\N^*$, (commutative, symmetric)
\item $Q_n(x_1,\dots, x_n, y_1,\dots, y_n)=x_ny_1^n+x_1^ny_n+\tilde{Q}_n(x_1,\dots,x_{n-1},y_1,\dots,y_{n-1})$, (homogeneous of bi-degree $n$, if $\deg(x_i):=\deg(y_i):=i$)
\item $\underline{1}=(1)_{n\in\N^*}$ is the neutral element
\end{itemize}
\end{prop}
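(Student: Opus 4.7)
The plan is to derive the polynomial structure of $\boxtimes$ directly from the freeness condition, in parallel with the argument behind Proposition~\ref{PropVRtrafo}.

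First, existence of $Q_n\in\Z[x_1,\dots,x_n,y_1,\dots,y_n]$ representing $m_n(ab)=\phi((ab)^n)$ as a polynomial in the moments of $a$ and $b$. Let $A_1\ni a$ and $A_2\ni b$ be the free subalgebras witnessing freeness. Set $\alpha=\phi(a)$, $\beta=\phi(b)$, $a'=a-\alpha 1_A$ and $b'=b-\beta 1_A$, so that $a'\in A_1\cap\ker\phi$ and $b'\in A_2\cap\ker\phi$. Expand $\phi((ab)^n)$ as a $\Z$-linear combination of $\phi$-values of words in $\{a',b'\}$ weighted by monomials in $\alpha,\beta$. Whenever consecutive letters of the same type appear, say a block $(a')^k$, apply the further splitting $(a')^k=((a')^k-\phi((a')^k)1_A)+\phi((a')^k)1_A$. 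By freeness, every alternating product of length $\geq 2$ of centred factors drawn from $A_1$ and $A_2$ has vanishing $\phi$, so only the scalar pieces survive. Iterating this reduction expresses $\phi((ab)^n)$ as a $\Z$-polynomial in $\alpha,\beta$ and the centred moments $\phi((a')^i),\phi((b')^j)$, and hence in the uncentred moments $m_i(a),m_j(b)$ with $i,j\leq n$.

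Next, the leading term and homogeneity. Assigning weight $i$ to both $x_i$ and $y_i$, every $\phi$-value of a word of length $2n$ is homogeneous of total weight $2n$, giving the required bi-homogeneity of $Q_n$. The variable $x_n$ can enter $Q_n$ only through $\phi((a')^n)$, which demands that all $n$ copies of $a$ in $(ab)^n$ coalesce into a single surviving block; this in turn forces every $b$-letter to be replaced by the scalar $\beta=y_1$, producing the contribution $x_n y_1^n$ with coefficient $1$. The symmetric analysis produces $x_1^n y_n$, and all remaining contributions involve only $x_1,\dots,x_{n-1},y_1,\dots,y_{n-1}$, collectively forming $\tilde Q_n$. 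The base datum $Q_1(x_1,y_1)=x_1 y_1$ is just $\phi(ab)=\phi(a)\phi(b)$, immediate from freeness, and the neutral-element property $Q_n(x_1,\dots,x_n,1,\dots,1)=x_n$ holds since $b=1_A$ reduces $(ab)^n$ to $a^n$.

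The remaining ingredient is the symmetry $Q_n(x,y)=Q_n(y,x)$. I would extract this from Voiculescu's $S$-transform~\cite{V1987}, which on the Zariski-open locus $\phi(a)\phi(b)\neq 0$ satisfies
\[
S_{\mu_a\boxtimes\mu_b}(z)=S_{\mu_a}(z)\,S_{\mu_b}(z).
\]
The right-hand side is manifestly symmetric under $a\leftrightarrow b$, while the coefficients of $S_{\mu_a}$ are $\Z$-polynomials in the moments of $a$ with $m_1(a)$ inverted. Hence $Q_n(x,y)=Q_n(y,x)$ holds in $\Z[x_1^{\pm 1},x_2,\dots,y_1^{\pm 1},y_2,\dots]$, and since the existence step already places $Q_n$ in $\Z[x_1,\dots,y_n]$, Zariski density of this locus promotes the identity to a polynomial identity over $\Z$. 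The main obstacle is precisely this last symmetry: in the purely algebraic, non-tracial setting $\phi((ab)^n)=\phi((ba)^n)$ is not a consequence of the freeness axiom alone and must be imported from an external input such as the $S$-transform or, equivalently, the Nica--Speicher cumulant--Kreweras-complement formula~\cite{NS}.
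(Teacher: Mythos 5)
The paper itself gives no argument for this proposition: it is expressly a restatement of Voiculescu's results and is attributed wholesale to~\cite{V1987} (the universal polynomials are his Lemma~2.2, the symmetry/group structure comes with the $S$-transform machinery). Your centering argument is therefore a genuine reconstruction rather than a parallel of anything in the text, and its first half is sound: writing $a=a'+\alpha 1_A$, $b=b'+\beta 1_A$, expanding $\phi((ab)^n)$, and iteratively splitting same-algebra blocks does yield $\Z$-polynomials in $m_i(a),m_j(b)$, $i,j\le n$, and the weight count gives bi-homogeneity of bi-degree $(n,n)$. The leading-term step is stated a little too quickly: you should also rule out that $\phi((a')^n)$ arises at a \emph{later} stage of the reduction, after intervening $b$-blocks are replaced by their means. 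This does work out --- if all $n$ letters $a'$ are to merge, no $a$-letter may ever be absorbed into a scalar, so every surviving $b'$-block is a singleton with $\phi(b')=0$ and can never be removed --- but that observation is what actually forces the accompanying factor to be $\beta^n=y_1^n$, and it deserves a sentence.

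The symmetry step is where I would push back. Your closing claim that $\phi((ab)^n)=\phi((ba)^n)$ ``is not a consequence of the freeness axiom alone'' is not correct: it is a consequence, just not a formal one. Given the universality you have already established, realize arbitrary moment sequences by free elements $a,b$ in the algebraic free product $(k[s],\phi_1)\ast(k[t],\phi_2)$; each $\phi_i$ is trivially a trace on a commutative algebra, the free product of traces is a trace, hence $\phi((ab)^n)=\phi((ba)^n)$ there, and universality promotes this to the polynomial identity $Q_n(x,y)=Q_n(y,x)$ over $\Z$. This is both more elementary and self-contained than importing the multiplicativity of the $S$-transform, which is the deepest statement of~\cite{V1987} and essentially the theorem this proposition is packaging; your Zariski-density argument on the locus $x_1y_1\neq0$ is fine as far as it goes, but it makes the proof circular in spirit (though admissible given that the paper itself simply cites~\cite{V1987}). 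With the leading-term verification filled in and the symmetry argument replaced (or the inaccurate necessity claim deleted), the proof is correct; you might also note in passing that associativity of the law, implicit in the phrase ``group law'', follows the same way from $\mu_{(ab)c}=\mu_{a(bc)}$ and the freeness of $\operatorname{alg}(A_1,A_2)$ from $A_3$, while $\boxtimes$-inverses exist only on $\mathfrak{G}$, where $x_1$ is invertible, as in equation~(\ref{inv_Q}).
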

For $n=2$, we have $Q_2(x_1,x_2,y_1,y_2)=x_2 y_1^2+x_1^2 y_2-x_1^2 y_1^2$, which follows from the above conditions.

The inverse with respect to $\boxtimes$ can be calculated recursively. We have $y_1=\frac{1}{x_1}$, and 
\begin{equation}
\label{inv_Q}
y_n=\frac{1}{x_1^n}(1-\frac{x_n}{x_1^n}-\tilde{Q}_n(x_1,\dots,x_{n-1},y_1(x_{1}),\dots,y_{n-1}(x_1,\dots,x_{n-1}))),
\end{equation}
where the notation $y_{i}(x_1,\dots,x_{i-1})$ expresses that $y_i$ is a function of $x_1,\dots,x_i$, $1\leq i\leq n-1$.

Let us make the following remarks:
\begin{itemize}
\item The homogeneity of the above group laws is due to the causality of the moments, i.e. in order to calculate a moment of order $n$ only information up to order $n$ is needed.
\item
In~\cite{FMcK2015} we generalised  the group laws in Propositions~\ref{PropVRtrafo} and~\ref{PropVStrafo} to the $n$-dimensional setting, thereby extending the derivation in~\cite{FMcK2013}.
\end{itemize}

The next proposition casts~[\cite{V1987} Lemma 2.2] into a more general form.
\begin{prop}
For all $A\in\mathbf{cAlg}_k$ the following holds: 
\begin{enumerate}
\item $(\mathbb{A}^{\N^*}(A),\boxplus)$ is an abelian group with neutral element $\underline{0}=(0,0,0,\dots)$.
\item $(\mathfrak{G}(A),\boxtimes)$ and $(\mathfrak{G}_+(A),\boxtimes)$ are abelian groups with neutral element $\underline{1}=(1,1,1,\dots)$.
\item For the operation $\boxplus$, $\mathbb{A}^{\N^*}$ is a {\em projective limit of finite dimensional groups},  i.e. 
$$
\mathbb{A}^{\N^*}(A)=\varprojlim_{n\in\N^*} \mathbb{A}^n(A).
$$

\item For the operation $\boxtimes$, $\mathfrak{G}(A)$ and $\mathfrak{G}_+(A)$ are {\em projective limits of finite dimensional groups}, i.e. 
$$
\mathfrak{G}(A)=\varprojlim_{n\in\N^*} \mathfrak{G}_n(A)\qquad\text{and}\quad\mathfrak{G}_+(A)=\varprojlim_{n\in\N^*} (\mathfrak{G}_+)_n(A)~.
$$
\end{enumerate}
\end{prop}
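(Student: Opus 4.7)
The plan is to deduce all four statements directly from the formal/algebraic group law structures furnished by Propositions~\ref{PropVRtrafo} and~\ref{PropVStrafo}. Since the universal polynomials $P=(P_n)$ and $Q=(Q_n)$ have integer coefficients, the operations $\boxplus$ and $\boxtimes$ specialise to any $A\in\mathbf{cAlg}_k$ by ring evaluation, and the polynomial identities encoding associativity, commutativity, and the neutral-element axiom transport automatically to identities on $A$-valued points.

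For (1), closure, associativity, commutativity and the neutrality of $\underline{0}$ are built into the definition of a commutative formal group law, so all that remains is to supply inverses. Using the explicit form $P_n(x,y)=x_n+y_n+\tilde{P}_n(x_1,\dots,x_{n-1},y_1,\dots,y_{n-1})$, a coordinatewise recursion
\[
y_1:=-x_1,\qquad y_n:=-x_n-\tilde{P}_n(x_1,\dots,x_{n-1},y_1,\dots,y_{n-1}),\quad n\geq 2,
\]
produces by substitution a sequence $(y_n)$ with $P_n(x,y)=0$ for all $n$, hence the unique $\boxplus$-inverse of $(x_n)\in\mathbb{A}^{\N^*}(A)$.

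For (2), the same strategy applies, but $Q_1(x_1,y_1)=x_1 y_1$ forces two adjustments. First, the inversion recursion~(\ref{inv_Q}) has prefactors $1/x_1^n$, so one must work under the condition $x_1\in A^{\times}$; this is exactly the definition of $\mathfrak{G}(A)$. Second, the subset $\mathfrak{G}_+(A)$ is closed under $\boxtimes$ and under inversion because $Q_1(1,1)=1$ and specialising~(\ref{inv_Q}) at $x_1=1$ yields $y_1=1$. The remaining axioms transfer as polynomial identities from Proposition~\ref{PropVStrafo}.

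For (3) and (4), the homogeneity clauses in Propositions~\ref{PropVRtrafo} and~\ref{PropVStrafo} ensure that $P_n$ and $Q_n$ depend only on $x_1,\dots,x_n$ and $y_1,\dots,y_n$. Hence the truncation maps $\pi_n$ are group homomorphisms making each of $\mathbb{A}^n(A)$, $\mathfrak{G}_n(A)$ and $(\mathfrak{G}_+)_n(A)$ into an abelian group, compatibly with the transition morphisms between successive truncations. A sequence in $\mathbb{A}^{\N^*}(A)$ (resp.\ $\mathfrak{G}(A)$, $\mathfrak{G}_+(A)$) is nothing but a coherent family of its truncations, so the universal property of $\varprojlim$ gives the claimed identifications. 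There is no substantial obstacle; the most delicate point is verifying that $\mathfrak{G}(A)$ and $\mathfrak{G}_+(A)$ are stable under~(\ref{inv_Q}), which is immediate from inspection.
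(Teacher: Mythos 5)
Your proposal is correct, and it follows exactly the route the paper itself implies: the paper states this proposition without a written proof, relying on the universal integer-coefficient group laws of Propositions~\ref{PropVRtrafo} and~\ref{PropVStrafo}, the recursive inversion formula~(\ref{inv_Q}) (which forces $x_1\in A^{\times}$, respectively preserves $x_1=1$), and the causality/homogeneity remark that each $P_n$, $Q_n$ involves only the first $n$ coordinates, which yields the projective-limit descriptions. Your write-up supplies precisely these verifications, so it matches the paper's (implicit) argument.
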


\begin{prop}
For all $A\in\mathbf{cAlg}_k$ the following holds:
\begin{enumerate}
\item $(\mathfrak{G}(A),\boxtimes)$ is the direct product of the normal subgroup $\mathfrak{G}_+(A)$ and the $1$-dimensional torus
$\mathfrak{G}_1(A)$, i.e.
$$
\mathfrak{G}(A)=\mathfrak{G}_1(A)\times \mathfrak{G}_+(A)~,
$$
or equivalently, the sequence
\[
\begin{xy}
  \xymatrix{
                             0_A\ar[r]&\mathfrak{G}_1(A)\ar[r]^{\iota} &\mathfrak{G}(A)\ar[r]^p &\mathfrak{G}_+(A)\ar[r] &1_A~.
               }
\end{xy}
\]
with maps $\iota$ and $p$, given by $\iota(a):=(a,1,1,1,\dots)$ and $p(a_1,a_2, a_3\dots):=(1,\frac{a_2}{a_1},\frac{a_3}{a_1},\dots)$, respectively, is exact.
\item The groups $\mathfrak{G}(A)$ and $\mathfrak{G}_+(A)$ are  filtered by their respective subgroups $\mathfrak{G}_n(A)$ and $(\mathfrak{G}_+)_n(A)$ in ascending order.
\end{enumerate}
\end{prop}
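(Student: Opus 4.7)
My approach is to reduce both parts to algebraic manipulations with the universal polynomials $Q_n$ of Proposition~\ref{PropVStrafo}. The two structural features I plan to exploit are that $Q_1(x_1,y_1)=x_1 y_1$, and that $Q_n(x,y)$ depends only on $x_1,\dots,x_n$ and $y_1,\dots,y_n$ by the stated bi-homogeneity of degree $n$.

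For part (1), the plan is to establish the displayed sequence as a split short exact sequence of abelian groups; since $\boxtimes$ is commutative, such a sequence automatically yields a direct product decomposition. First I would verify that $\iota$ is a group homomorphism by evaluating $Q_n$ on its image and comparing the result against the $n$-th coordinate of $\iota(a\cdot b)$, using bi-homogeneity to keep the calculation degree-by-degree. Next I would check that $p$ is a group homomorphism: since $Q_1(x_1,y_1)=x_1 y_1$ the first coordinate of a $\boxtimes$-product simply multiplies, and dividing every higher coordinate by the first is compatible with $Q_n$ provided the recursive polynomial identities are respected. Injectivity of $\iota$ is immediate, surjectivity of $p$ follows because the natural inclusion $\mathfrak{G}_+(A)\hookrightarrow\mathfrak{G}(A)$ is a group homomorphism providing a section of $p$, and exactness in the middle then amounts to a direct comparison of $\ker(p)$ with the image of $\iota$.

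For part (2), bi-homogeneity of $Q_n$ asserts precisely that the truncation map $(a_1,a_2,\dots)\mapsto(a_1,\dots,a_n)$ is a well-defined surjective group homomorphism $\pi_n\colon\mathfrak{G}(A)\twoheadrightarrow\mathfrak{G}_n(A)$. The kernel $K_n:=\ker\pi_n$ is then a normal subgroup, and the descending chain $K_1\supseteq K_2\supseteq\cdots$ corresponds dually to an ascending filtration of $\mathfrak{G}(A)$ by the finite-dimensional quotient groups $\mathfrak{G}_n(A)\cong\mathfrak{G}(A)/K_n$, which is consistent with the projective limit description already recorded in the preceding proposition. Restricting to sequences with first entry $1_A$---a condition preserved by $\boxtimes$ because $Q_1(1,1)=1$---yields the parallel ascending filtration of $\mathfrak{G}_+(A)$ by the subgroups $(\mathfrak{G}_+)_n(A)$.

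The only genuinely technical step is the homomorphism property of $p$, since it requires tracking how rescaling by powers of the first coordinate interacts with the higher $Q_n$. The statement is in effect a generalisation of~\cite{V1987} Lemma~2.2, which is explicitly mentioned as its antecedent; once this compatibility is verified, the remaining points are formal consequences of the splitting lemma in the abelian category.
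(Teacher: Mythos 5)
The decisive step in your plan for part (1) is exactly the one you defer, and as stated it fails. Take $n=2$, where $Q_2(x_1,x_2,y_1,y_2)=x_2y_1^2+x_1^2y_2-x_1^2y_1^2$. For the map $\iota(a)=(a,1,1,\dots)$ one gets $\bigl(\iota(a)\boxtimes\iota(b)\bigr)_2=b^2+a^2-a^2b^2\neq 1=\iota(ab)_2$ in general, so $\iota$ is not a $\boxtimes$-homomorphism; likewise, for $p(a)=(1,a_2/a_1,a_3/a_1,\dots)$ one finds $\bigl(p(a)\boxtimes p(b)\bigr)_2=\tfrac{a_2}{a_1}+\tfrac{b_2}{b_1}-1$ while $p(a\boxtimes b)_2=\tfrac{a_2b_1}{a_1}+\tfrac{a_1b_2}{b_1}-a_1b_1$, and even $p\circ\iota$ is not trivial, so with these formulas the displayed sequence is not even a complex. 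Your assertion that the compatibility holds \emph{provided the recursive polynomial identities are respected} is precisely what a degree-two computation refutes. The decomposition itself is true, but it requires the weighted maps dictated by the bi-homogeneity $\deg(x_i)=\deg(y_i)=i$: the torus embeds as $\iota(a)=(a,a^2,a^3,\dots)$ (the distribution of the scalar $a$, for which $\iota(\lambda)\boxtimes(a_1,a_2,\dots)=(\lambda a_1,\lambda^2a_2,\dots)$ is the dilation), and the projection must be $p(a)_n=a_n/a_1^n$ (rescaling by the first moment, as in Voiculescu's Lemma 2.2). With these maps your splitting argument does go through ($\ker p=\{(a_1,a_1^2,a_1^3,\dots)\}=\operatorname{im}\iota$, the inclusion of $\mathfrak{G}_+(A)$ is a section, and commutativity of $\boxtimes$ gives the direct product), but a proof that merely promises to verify the printed formulas does not establish the result; you needed to carry out the check and, in doing so, identify the correct weights.

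Two smaller points. In part (2) what makes the truncation $\pi_n\colon\mathfrak{G}(A)\to\mathfrak{G}_n(A)$ a group homomorphism is not bi-homogeneity but the causality property that $Q_n\in\Z[x_1,\dots,x_n,y_1,\dots,y_n]$ depends only on the first $n$ coordinates of each argument; you conflate the two. Your reading of the filtration through the kernels $K_n=\ker\pi_n$ and the quotients $\mathfrak{G}(A)/K_n\cong\mathfrak{G}_n(A)$, consistent with the projective-limit description of the preceding proposition, is a reasonable interpretation of the statement and that part is fine.
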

In the $n$-dimensional case, $n\geq2$, one has equivalent statements, but instead with the $n$-dimensional torus and the semi-direct product replacing the direct product, cf.~\cite{FMcK2013}. 
\subsection{The natural transformations $R$ and $S$}
\begin{dfthm}[Voiculescu's $R$-transform]
The $R$-transform is a natural isomorphism 
$$
R:(\mathbb{A}^{\N^*},\boxtimes)\rightarrow (\mathbb{A}^{\N^*},+),
$$
i.e. for all $A\in\mathbf{cAlg}_k$ and $f,g\in\mathbb{A}^{\N^*}(A)$, we have 
$$
R_{A}(f\boxplus g)=R_A(f)+R_A(g).
$$
\end{dfthm}
For $A\in\mathbf{cAlg}_k$, the {\bf units} of the ring  of formal power series $A[[z]]$ with coefficient in $A$, are given by
$$
A[[z]]^{\times}=A^{\times}(1+zA[[z]]).
$$
Let $\bullet[[z]]^{\times}$ denote the functor $A\mapsto A[[z]]^{\times}$ and $\Lambda(A)\subset A[[z]]^{\times}$ the subring $\Lambda(A):=1+zA[[z]]$.

To every $f\in\mathfrak{G}(A)$ corresponds a formal power series of the form
$$
f(z)=a_1z+a_2z^2+a_3 z^3+\dots,\qquad a_1\in A^{\times},
$$
which is invertible with respect to {\bf composition of power series}. We denote this inverse by $f^{-1}(z)$, and in fact, we have a natural transformation $\operatorname{inv}:\mathfrak{G}\rightarrow\mathfrak{G}$, which is an involution, i.e. $\operatorname{inv}^2=\id$ and hence  $f^{-1}=\operatorname{inv}(f)$.

\begin{dfthm}[Voiculescu's $S$-transform]
The {\bf $S$-transform} is the natural isomorphism (of set valued functors)
$$
S:\mathfrak{G}\rightarrow\bullet[[z]]^{\times},
$$ 
given by 
\begin{equation}
\label{S_trafo}
S_A(f):=\frac{1+z}{z}f^{-1}(z),
\end{equation}
for $A\in\mathbf{cAlg}_k$ and $f\in\mathfrak{G}(A)$.
\end{dfthm}

The following statements, originally deduced by Voiculescu in~\cite{V1987}, cf. also~\cite{VDN}, not only justify the above definitions but further extend them. 

\begin{prop}
The $S$-transform is a natural isomorphism between the group-valued functors $\mathfrak{G}$ and $\bullet[[z]]^{\times}$,  
i.e. for $A\in\mathbf{cAlg}_k$, and the abelian groups $((A[[z]])^{\times},\cdot)$, $(\Lambda(A),\cdot)$, $(\mathfrak{G}(A),\boxtimes)$  and $(\mathfrak{G}_+(A),\boxtimes)$, the diagram 
\[
\begin{xy}
  \xymatrix{
 (\mathfrak{G}_+(A),\boxtimes)\ar[r]_{S_A}^{\sim}  \ar[d]_{\operatorname{incl.} }  & (\Lambda(A),\cdot)\ar[d]_{\operatorname{incl.}}\\
     (\mathfrak{G}(A),\boxtimes)\ar[r]_{S_A}^{\sim}       & ((A[[z]])^{\times},\cdot)
               }
\end{xy}
\]
commutes. For $f,g\in\mathfrak{G}(A)$, it implies:  
$
S_A(f\boxtimes g)=\mathcal{S}_A(f)\cdot S_A(g).
$
\end{prop}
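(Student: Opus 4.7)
The bijectivity of $S_A$ as a map of sets is already given by the preceding Definition/Theorem: from $h(z)=S_A(f)$ one recovers $f^{-1}(z)=\tfrac{z}{1+z}h(z)\in\mathfrak{G}(A)$ and then $f$ by compositional inversion, and both operations make sense because $h$ has invertible constant term. So the content of the proposition splits into three tasks: (i) the multiplicativity $S_A(f\boxtimes g)=S_A(f)\cdot S_A(g)$, (ii) the restriction $S_A(\mathfrak{G}_+(A))=\Lambda(A)$, and (iii) naturality in $A$.

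My plan for the crucial task (i) is to exploit the universality stated in Proposition~\ref{PropVStrafo}. The operation $\boxtimes$ on $\mathfrak{G}(A)$ is computed, coefficient by coefficient, by a fixed sequence of polynomials $Q_n\in\Z[x_1,\dots,x_n,y_1,\dots,y_n]$. On the other side, the map $f\mapsto S_A(f)=\frac{1+z}{z}f^{-1}(z)$, multiplication of power series, and the inverse $S_A^{-1}$ are all coefficient-wise polynomial over $\Z$ (compositional inversion of a series with unit leading coefficient inverts its leading coefficient and otherwise uses only $\Z$-polynomial operations). Hence both sides of the identity
\[
S_A(f\boxtimes g)=S_A(f)\cdot S_A(g)
\]
are given, coefficient by coefficient, by universal $\Z$-polynomials in the coefficients of $f$ and $g$. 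It therefore suffices to verify the identity on one \emph{test pair} where we already know the result: a pair $(a,b)$ of free random variables in a unital $C^*$-probability space over $\C$, whose joint distribution is faithfully determined by its moments (and where $a$ is invertible with $\phi(a)\neq 0$, so that $f,g\in\mathfrak{G}(\C)$). For such a pair Voiculescu's original theorem in~\cite{V1987} gives precisely $S_\C(f\boxtimes g)=S_\C(f)S_\C(g)$. By the universality argument the identity then propagates to every $A\in\mathbf{cAlg}_k$.

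Task (ii) is then a short computation: if $f(z)=a_1z+a_2z^2+\cdots$ then $f^{-1}(z)=a_1^{-1}z+\cdots$, so the constant term of $S_A(f)$ is $a_1^{-1}$; hence $f\in\mathfrak{G}_+(A)$, i.e.\ $a_1=1_A$, if and only if $S_A(f)\in\Lambda(A)=1+zA[[z]]$. Combined with (i), this gives the commuting square, since the vertical inclusions on both sides are just the inclusions of those elements whose zeroth-order datum is trivial, and $S_A$ sends one to the other. For task (iii), any morphism $\varphi:A\to B$ in $\mathbf{cAlg}_k$ extends coefficient-wise to $\mathfrak{G}(A)\to\mathfrak{G}(B)$, to $A[[z]]^\times\to B[[z]]^\times$, and to $\Lambda(A)\to\Lambda(B)$; since $S$ is built from $\Z$-polynomial operations, it commutes with $\varphi$.

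The main obstacle is clearly step (i). All the algebraic bookkeeping reduces the statement to a known analytic fact, but one must be careful that the universal-polynomial reduction actually applies: it relies on the \emph{same} $Q_n$ controlling $\boxtimes$ for every $A$ (Proposition~\ref{PropVStrafo}) and on the fact that each coefficient of $S_A(f)$, $S_A^{-1}(h)$ and of products in $A[[z]]^\times$ depends polynomially over $\Z$ on the input coefficients together with a single inverse $a_1^{-1}$, so comparing two such polynomial formulas on the generic (free, $\C$-valued) pair forces equality universally.
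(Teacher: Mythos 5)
Your overall structure is sound, and in fact the paper itself offers no proof at all here: it simply records the statement as a functorial restatement of Voiculescu's results in~\cite{V1987,VDN}. Your tasks (ii) and (iii) are fine as written, and the idea behind (i) --- that $\boxtimes$, $S_A$, $S_A^{-1}$ and multiplication in $A[[z]]^{\times}$ are all given coefficientwise by universal polynomials over $\Z$ in the inputs together with $x_1^{-1},y_1^{-1}$, so that the identity $S_A(f\boxtimes g)=S_A(f)\cdot S_A(g)$ reduces to an identity of Laurent polynomials in $\Z[x_1^{\pm1},x_2,\dots,x_n,y_1^{\pm1},y_2,\dots,y_n]$ --- is the right reduction.

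The genuine gap is the final step of (i): verifying the identity on \emph{one} test pair of free random variables in a $C^*$-probability space only checks the two families of universal polynomials at a single point of moment space, and equality of two polynomials at one point does not force them to coincide. The condition you impose on the pair (that its joint distribution is determined by moments, and that $a$ be invertible) is irrelevant to this issue; what is needed is genericity. To repair the argument you must either (a) test on a family of pairs whose moment tuples $(x_1,\dots,x_n,y_1,\dots,y_n)$ form a Zariski-dense subset of the locus $x_1y_1\neq0$ --- e.g.\ note that the truncated moment sets of compactly supported measures on $\R$ with nonzero mean have nonempty interior in $\R^n$, realize any two such measures as free elements in a free product, and use that a polynomial vanishing on a set with nonempty real interior vanishes identically; or (b) choose a single pair whose moments are algebraically independent over $\Q$, which again requires an existence argument; or, cleanest, (c) drop the $C^*$-setting entirely: any two moment sequences with invertible first entry are the distributions of free elements in the algebraic free product of $(k[x],\phi_1)$ and $(k[y],\phi_2)$, and Voiculescu's theorem (in the algebraic form proved combinatorially in~\cite{NS}, Lecture 18) applies there, so the polynomial identity holds at every point with $x_1,y_1$ invertible and hence identically. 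Also note that the hypothesis you actually need on the test elements is $\phi(a)\neq0$, $\phi(b)\neq0$, not invertibility of $a$ in the algebra. With one of these fixes, your argument is complete and is essentially the standard derivation of the algebraic/functorial statement from Voiculescu's analytic one, which the paper leaves implicit by citation.
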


We close this subsection by putting the $R-$ and $S$-transform into perspective by comparing them to the integral transforms in classical probability theory. 
The {\bf Mellin transform}, cf. e.g.~\cite{GS}, of a positive real random variable $X$, i.e. $X\geq0$, is defined as
$$
\mathcal{M}_X(t):=\E[X^{it}]=\int_0^{\infty} x^{it}d\mu_X(x),
$$
where $t\in\R$ and with the convention $0^{it}:=0$, for all $t$.
Then, for $X,Y$ independent classical and $a,b$ free non-commutative random variables, respectively, the following relations hold:
\[
\begin{tabular}{l|l|l} $X,Y$~{\bf independent}& convolution & transform (Fourier / Mellin)  \\\hline additive & $\mu_{X+Y}=\mu_X\ast\mu_Y$ & $\mathcal{F}_{X+Y}(t)=\mathcal{F}_X(t)\cdot\mathcal{F}_Y(t)$  \\multiplicative & $\mu_{X\cdot Y}=\mu_X\ast_{\operatorname{m}}\mu_Y$ & $\mathcal{M}_{X\cdot Y}(t)=\mathcal{M}_X(t)\cdot\mathcal{M}_Y(t)$   \\\hline\hline 
$a,b$~{\bf free} & convolution & transform ($R$ / $S$)  \\\hline additive & $\mu_{a+b}=\mu_a\boxplus\mu_b$ & $R_{a+b}(z)=R_a(z)+R_b(z)$  \\multiplicative & $\mu_{a\cdot b}=\mu_a\boxtimes\mu_b$ & $S_{a\cdot b}(z)=S_a(z)\cdot S_b(z)$ \end{tabular}
\]
where we used the notation $R_a(z):=R_{\mu_a}(z)$ and $S_a(z):=S_{\mu_a}(z)$, etc.

\subsection{Free cumulants and the $\boxtimes_{\operatorname{NS}}$-convolution}
The theory of free cumulants was originally created by R.~Speicher~\cite{S1997}, and then further  developed  jointly with A.~Nica, culminating in the monograph~\cite{NS}. An essential role is played by non-crossing partitions and their Kreweras complement, cf.~[\cite{NS}, Lectures 9., 17. and 18.].

For $n\in\N^*$, let $\operatorname{NC}(n)$ denote the poset of non-crossing partitions of the set $\{1,\dots,n\}$.
For $A\in\mathbf{cAlg_k}$, $f,g\in\mathfrak{G}(A)$, i.e. $f=(a_1,a_2,a_3,\dots)$ and $g=(b_1,b_2,b_3,\dots)$ with $a_1,b_1\in A^{\times}$,  and a non-crossing partition $\pi=\{V_1,\dots, V_p\}\in\operatorname{NC}(n)$, with {\bf Kreweras complement} $K(\pi)=\{W_1,\dots, W_q\}$, one introduces the operators  $X_{w,\pi}$ and $X_{w,K(\pi)}$, which are defined as follows:
\begin{eqnarray*}
\label{}
X_{\pi}(f)&:=&a_{|V_1|}\cdots a_{|V_p|}~,\\
X_{K(\pi)}(g)&:=& b_{|W_1|}\cdots b_{|W_q|}~,\\
\end{eqnarray*}
where $|V_i|$ and $|W_j|$ denote the cardinality of the set $V_i$ and $W_j$, respectively.
\begin{df}[Nica $\&$ Speicher's free boxed convolution]
For $A\in\mathbf{cAlg_k}$ and $f,g\in\mathfrak{G}(A)$, the one-dimensional {\bf free boxed convolution} $\boxtimes_{\operatorname{NS}}$ is defined as
\begin{equation}
\label{}
X_{n}(f\boxtimes g):=\sum_{\pi\in\operatorname{NC}(n)}X_{\pi}(f)\otimes X_{K(\pi)}(g)=\sum_{\begin{subarray}{c}
       \pi\in\operatorname{NC}(n)\\\pi=\{V_1,\dots V_p\}\\K(\pi)=\{W_1,\dots W_q\}
       \end{subarray}} a_{|V_1|}\cdots a_{|V_p|} b_{|W_1|}\cdots b_{|W_q|},
\end{equation}
with the product on the right hand-side taken in $A$.
\end{df}
The special vectors 
$\operatorname{Zeta}:=(1,1,1,\dots)$ and $\operatorname{Moeb}:=(\operatorname{Catalan}_n)_{n\in\N^*}$, where $\operatorname{Catalan}_n$ is the $n$th Catalan number, define the following maps:
\begin{eqnarray*}
\varphi_m(\bullet):=\bullet\boxtimes_{\operatorname{NS}} \operatorname{Zeta}\\
\varphi_{fc}(\bullet):=\bullet\boxtimes_{\operatorname{NS}}\operatorname{Moeb}
\end{eqnarray*}
which, by~\cite{NS}, satisfy
$$
\varphi_m\circ\varphi_{fc}=\id,\quad  \varphi_{fc}\circ\varphi_{m}=\id. 
$$
Therefore the two maps $\varphi:A^{\N^*}\rightarrow A^{\N^*}$ form a pair of isomorphisms, and give co-ordinate changes from moments to free cumulants. 
\begin{prop}
The one-dimensional free boxed convolution $\boxtimes_{\operatorname{NS}}$, defines an infinite commutative algebraic group law over $\Z$, i.e. there exists a countable sequence of universal polynomials $K_{\boxtimes_{\operatorname{NS}}}=K=(K_1,K_2,K_3,\dots)$, $K_i\in\Z[x_1,\dots,x_n,y_1,\dots,y_n]$ such that
\begin{itemize}
\item $K_1(x_1,y_1)=x_1\cdot y_1$,
\item $K_n(x_1,\dots, x_n,y_1,\dots,y_n)=K_n(y_1,\dots,y_n,x_1,\dots, x_n)$, (symmetric, commutative)
\item $K_n(x_1,\dots,x_n,y_1,\dots,y_n)=x_ny_1^n+x_1^ny_n+\tilde{K}_n(x_1,\dots,x_{n-1},y_1,\dots,y_{n+1})$ with
$$
\tilde{K}_n=\sum_{\begin{subarray}{c}
       \pi\in\operatorname{NC}(n),\pi\neq\mathbf{1,0}\\\pi=\{V_1,\dots V_p\}\\K(\pi)=\{W_1,\dots W_q\}
       \end{subarray}} x_{|V_1|}\cdots x_{|V_p|} y_{|W_1|}\cdots y_{|W_q|},
$$
where $\mathbf{0}$ and $\mathbf{1}$ denote the minimal and maximal element of the poset $\operatorname{NC}(n)$. It is homogeneous of bi-degree if $\deg(x_i):=\deg(y_i):=i$, $i\in\N^*$.
\item $(1,\underline{0})=(1,0,0,0,\dots)$ is the neutral element.
\end{itemize}
\end{prop}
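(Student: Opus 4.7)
The plan is to read off the polynomials $K_n$ directly from the definition of $\boxtimes_{\operatorname{NS}}$ and then verify each bullet combinatorially. Concretely, I set
\[
K_n(x_1,\dots,x_n,y_1,\dots,y_n) \; := \sum_{\pi = \{V_1,\dots,V_p\} \in \operatorname{NC}(n)} x_{|V_1|}\cdots x_{|V_p|}\, y_{|W_1|}\cdots y_{|W_q|},
\]
where $K(\pi) = \{W_1,\dots,W_q\}$ is the Kreweras complement of $\pi$. Since every block of $\pi$ or $K(\pi)$ has cardinality in $\{1,\dots,n\}$, each $K_n$ is a polynomial in $2n$ variables with non-negative integer coefficients, which provides the universal polynomial sequence over $\Z$.

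First I would verify the two easy bullets. The poset $\operatorname{NC}(1)$ contains only the trivial partition, whose Kreweras complement is itself, so $K_1=x_1y_1$. For the neutral element I evaluate $K_n$ at $(y_1,\dots,y_n)=(1,0,\dots,0)$: the $y$-factor vanishes unless every block of $K(\pi)$ has cardinality $1$, i.e.\ $K(\pi)=\mathbf{0}$, equivalently $\pi=\mathbf{1}$, and the unique surviving term is $x_n$. This checks $(1,0,0,\dots)\boxtimes_{\operatorname{NS}} f = f$, and the symmetric argument handles the other side.

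For the homogeneity/decomposition, the variable $x_n$ occurs in a monomial of $K_n$ only when some block $V_i\in\pi$ has size $n$, forcing $\pi=\mathbf{1}$ and $K(\pi)=\mathbf{0}$, contributing exactly $x_n y_1^n$; symmetrically, $y_n$ occurs only for $\pi=\mathbf{0}$, $K(\pi)=\mathbf{1}$, contributing $x_1^n y_n$. For every remaining $\pi\in\operatorname{NC}(n)\setminus\{\mathbf{0},\mathbf{1}\}$ one has $|\pi|\geq 2$, so all blocks of $\pi$ have size $<n$; combined with $|K(\pi)|=n+1-|\pi|\geq 2$, the same bound holds for $K(\pi)$. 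This yields precisely the stated $\tilde{K}_n$ in the variables $x_1,\dots,x_{n-1},y_1,\dots,y_{n-1}$, and bi-homogeneity of total bi-degree $(n,n)$ under $\deg x_i=\deg y_i=i$ is immediate from $\sum_i|V_i|=n=\sum_j|W_j|$.

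The main obstacle is the commutativity $K_n(x,y)=K_n(y,x)$. The plan is to substitute $\tau:=K(\pi)$ in the expression for $K_n(y,x)$, rewriting it as a sum over $\tau\in\operatorname{NC}(n)$ in which the $x$-monomial is indexed by blocks of $\tau$ and the $y$-monomial by blocks of $K^{-1}(\tau)$. The claim then reduces to showing that the multiset of block sizes of $K^{-1}(\tau)$ coincides with that of $K(\tau)$ for every $\tau$. This follows from the identity $K(\tau)=\rho(K^{-1}(\tau))$, where $\rho$ denotes the cyclic rotation on $\operatorname{NC}(n)$ and $K^{2}=\rho$; since rotation preserves block sizes, the two multisets agree. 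This is precisely the symmetry argument underlying the commutativity of the boxed convolution in~[\cite{NS}, Lect.~17]. The remaining group-law axioms over $\Z$---associativity and existence of inverses---are then obtained in the universal case $A=\Z[x_i,y_i]_{i\in\N^*}$ in parallel with Propositions~\ref{PropVRtrafo} and~\ref{PropVStrafo}, completing the verification of the commutative algebraic group law.
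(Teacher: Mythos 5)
Your proposal is correct, and it is more self-contained than the paper's own argument. The paper disposes of well-definedness, associativity and the unit by citing [\cite{NS}, Proposition 17.5, 1) and 2)], and for the bi-degree claim it invokes the Kreweras identity $|\pi|+|K(\pi)|=n+1$ together with the analogous computation in~\cite{FMcK2013}. You instead verify the unit, the decomposition $K_n=x_ny_1^n+x_1^ny_n+\tilde{K}_n$ and the bi-homogeneity directly from the combinatorics (using the same identity $|K(\pi)|=n+1-|\pi|$ to see that $\pi\neq\mathbf{0},\mathbf{1}$ forces all blocks of both $\pi$ and $K(\pi)$ to have size $<n$), and you prove commutativity by the genuinely combinatorial route: reindexing by $\tau=K(\pi)$ and using $K^2=$ rotation, so that $K^{-1}(\tau)$ and $K(\tau)$ have the same multiset of block sizes. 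That argument is sound and is exactly the symmetry hidden behind the citation in the paper; it buys a proof that does not presuppose the analytic/moment-level results, whereas the paper's citation route is shorter.

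The one soft spot is your last sentence. Associativity of $\boxtimes_{\operatorname{NS}}$ is not ``obtained in parallel with'' Propositions~\ref{PropVRtrafo} and~\ref{PropVStrafo}: those concern $\boxplus$ and $\boxtimes$ of moment sequences, and associativity of the boxed convolution of cumulant sequences does not formally follow from them unless you also invoke the moment--cumulant correspondence (the maps $\varphi_m,\varphi_{fc}$ given by Zeta and Moeb), or else cite the direct combinatorial proof in [\cite{NS}, Lecture 17] --- which is precisely what the paper does via Proposition 17.5, 1). Similarly, inverses should be justified by the recursive solvability coming from your triangular decomposition of $K_n$ (requiring $x_1$ invertible, i.e.\ working in $\mathfrak{G}(A)$), rather than by analogy. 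With associativity either cited from~\cite{NS} or reduced to $\boxtimes$ via the cumulant--moment bijection, and inverses obtained from the recursion, your argument is complete.
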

For $n=2$, with two non-crossing partitions, we have
$K_2(x_1,x_2,y_1,y_2)=x_2y_1^2+x_1^2y_2$
and for $n=3$, where one has five non-crossing partitions, we have
\begin{eqnarray*}
K_3(x_1,x_2,x_3,y_1,y_2,y_3)&=&x_3y_1^3+x_1^3y_3+x_1x_2y_1y_2+x_1x_2y_1y_2+x_1x_2y_1y_2\\
&=&x_3y_1^3+x_1^3y_3+3x_1x_2y_1y_2.
\end{eqnarray*}
The inverse with respect to $\boxtimes_{\operatorname{NS}}$ can be calculated recursively. We have $y_1=\frac{1}{x_1}$ and
$$
y_n=-\frac{x_n}{x_1^{2n}}-\frac{1}{x_1^n}\tilde{K}_n(x_1,\dots,x_{n-1},y_1(x_1),\dots,y_{n-1}(x_1,\dots,x_{n-1})),
$$
where $y_i(x_1,\dots,x_{i-1})$ expresses that $y_i$ is a function of $x_1,\dots,x_i$, $1\leq i\leq n-1$.
\begin{proof}
From [\cite{NS} Proposition 17.5, 1)] it follows that the group law is well defined, in particular associative. The existence and form of the unit from can be deduced from~[\cite{NS} Proposition 17.5, 2)].
In oder to show the claim about the bi-degree, one has to use the the following property of the Kreweras  complement, i.e. for $\pi\in\operatorname{NC}(n)$ one has 
$$
|\pi|+|K(\pi)|=n+1,
$$
and then one proceeds as in the equivalent proof in~\cite{FMcK2013}. 
\end{proof}
\begin{dfthm}[Nica and Speicher's free $F$-transform]
The free $F$-transform is the natural isomorphism (of set valued functors)
$$
F:\mathfrak{G}\rightarrow \bullet
[[z]]^{\times},
$$
given by
\begin{equation}
\label{F-trafo}
F_A(f):=\frac{1}{z}f^{-1}(z),
\end{equation}
for all $A\in\mathbf{cAlg_k}$ and $f\in\mathfrak{G}(A)$.
\end{dfthm}
The following two statements can be deduced from~[\cite{NS}, Lecture 18.]
\begin{prop}
The free $F$-transform is a natural isomorphism from $\mathfrak{G}$ to $\bullet[[z]]^{\times}$, i.e. every ${F}_A$ is an isomorphism of abelian groups, such that the diagram
 \[
\begin{xy}
  \xymatrix{
  \left(\mathfrak{G}_+(A),\boxtimes_{\operatorname{NS}}\right)\ar[d]_{\operatorname{incl.}}\ar[rr]_{{F_A}}^{\sim}&&\left(\Lambda(A),\cdot\right)\ar[d]_{\operatorname{incl.}}\\
\left(\mathfrak{G}(A),\boxtimes_{\operatorname{NS}}\right)\ar[rr]_{F_A}^{\sim}&&\left(A[[z]]^{\times},\cdot\right) 
     }
\end{xy}
\]
commutes. For $f,g\in\mathfrak{G}(A)$, it implies:
$\mathcal{F}_R(f\boxtimes_{\operatorname{NS}} g)=\mathcal{F}_A(f)\cdot \mathcal{F}_A(g).$
\end{prop}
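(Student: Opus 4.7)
The statement has two ingredients: the commutativity of the square (which reduces to checking that $F_A$ sends $\mathfrak{G}_+(A)$ into $\Lambda(A)$), and the fact that each $F_A$ is a group homomorphism --- bijectivity being already granted by the preceding Definition/Theorem. My plan is to dispatch the diagram by direct computation with power series, and then to deduce multiplicativity from the combinatorial identities in [NS, Lecture 18].

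For the diagram: if $f \in \mathfrak{G}_+(A)$, then $f(z)=z+a_2 z^2+\dots$, so its compositional inverse starts $f^{-1}(z)=z+\dots$, and hence $F_A(f)=z^{-1}f^{-1}(z)=1_A+\dots$ lies in $\Lambda(A)$. Both vertical maps are just the natural embeddings by distinguished leading term, so commutativity is automatic once one checks that $F_A$ maps $\mathfrak{G}_+$ into $\Lambda$. Naturality in $A$ is then immediate, because every step is a universal polynomial operation in the coefficients.

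The group-homomorphism property is the substantive step. I would unfold the definition
\[
X_n(f \boxtimes_{\operatorname{NS}} g) = \sum_{\pi \in \operatorname{NC}(n)} X_\pi(f)\, X_{K(\pi)}(g)
\]
and match it to the $n$th coefficient of $F_A(f)\cdot F_A(g)$. The matching is effected by the standard identity for the $\eta$-series in Nica--Speicher's Lecture 18: the series $F_A(f)$ (up to normalisation, this is exactly their $\eta$-series, read through the compositional inverse $f^{-1}$) is shown there to be multiplicative under $\boxtimes_{\operatorname{NS}}$, by exploiting the fundamental property $|\pi|+|K(\pi)|=n+1$ of the Kreweras complement together with the convolutional structure of the sum over $\operatorname{NC}(n)$. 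The homogeneity of bi-degree established in the previous proposition ensures that all terms line up at the correct order.

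The main obstacle is precisely this last identification: rewriting the sum over non-crossing partitions as an honest convolution product of the coefficients of $F_A(f)$ and $F_A(g)$. This is the technical heart of [NS, Lecture 18] and I would simply appeal to it rather than redo the combinatorics; once granted, multiplying $F_A(f)$ and $F_A(g)$ as formal power series and comparing coefficient by coefficient yields $F_A(f\boxtimes_{\operatorname{NS}} g)=F_A(f)\cdot F_A(g)$, and restricting to $\mathfrak{G}_+(A)$ gives the upper row of the diagram.
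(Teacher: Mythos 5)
Your proposal is correct and takes essentially the same route as the paper, which offers no argument beyond the remark that the statement ``can be deduced from~[\cite{NS}, Lecture 18]'': you verify the routine part (that $F_A$ sends $\mathfrak{G}_+(A)$ into $\Lambda(A)$, so the square commutes, with naturality clear since everything is a universal polynomial in the coefficients) and defer the substantive multiplicativity of $\frac{1}{z}f^{-1}(z)$ under $\boxtimes_{\operatorname{NS}}$ to Nica--Speicher, exactly as the paper does. One small naming correction: the series $\frac{1}{z}f^{-1}(z)$ is Nica--Speicher's ``Fourier transform'' $\mathcal{F}$ for the boxed convolution, not their $\eta$-series (which belongs to the Boolean theory); the result you are citing from Lecture 18 is nevertheless the right one.
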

\subsection{Hopf algebras related to free probability}
Let $G$ be a covariant and representable functor from $\mathbf{cAlg}_k$ to the category of groups. Then $G$ defines an affine group scheme. This is equivalent to the existence, up to isomorphism, of a commutative but not necessarily co-commutative Hopf algebra $H$, such that $\Hom_{\mathbf{cAlg}_k}(H,A)$ is naturally isomorphic to $G(A)$, for all $A\in\mathbf{cAlg}_k$. As usual, $\mathbb{G}^n_a$ denotes the $n$-dimensional additive group law and $\mathbb{G}^n_m$ the n-dimensional multiplicative group law.
\begin{prop}
The formal group law $F_{\boxtimes}$ corresponding to $Q_{\boxtimes}$, is given by
$$
F_{\boxtimes,n}(x_1,\dots,x_n, y_1,\dots, y_n)=Q_n(x_1+1,\dots,x_n+1, y_1+1,\dots, y_n+1)-1,
$$
for $n\in\N^*$. 

The formal group law $F^+_{\boxtimes}$, corresponding to $x_1=y_1=1$ in $Q_{\boxtimes}$,  is homogeneous of degree $n-1$, if $\operatorname{deg}(x_i):=\operatorname{deg}(y_i):=i-1$, for $i\geq2$.
\end{prop}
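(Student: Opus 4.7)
The first claim is a formal translation-of-origin argument. Setting
\[
F_{\boxtimes, n}(x, y) := Q_n(x_1 + 1, \ldots, x_n + 1, y_1 + 1, \ldots, y_n + 1) - 1,
\]
one checks the formal group axioms directly: since $\underline{1}$ is the neutral element of $\boxtimes$, we have $Q_n(\underline{1}, y + \underline{1}) = y_n + 1$, hence $F_{\boxtimes, n}(\underline{0}, y) = y_n$, and symmetrically $F_{\boxtimes, n}(x, \underline{0}) = x_n$; commutativity and associativity are inherited from those of $(\mathfrak{G}(A), \boxtimes)$ (Proposition~\ref{PropVStrafo}).

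For the second claim, the plan is to exhibit a scaling action that realises the homogeneity. For a formal parameter $t$, define the dilation
\[
D_t \colon \mathfrak{G}_+(A) \to \mathfrak{G}_+(A), \qquad (1, x_2, x_3, \ldots) \mapsto \bigl(1, 1 + t(x_2 - 1), 1 + t^2(x_3 - 1), \ldots\bigr).
\]
In the shifted coordinates $\xi_i := x_i - 1$ (for $i \geq 2$), $D_t$ acts as $\xi_i \mapsto t^{i-1}\xi_i$, so the claimed homogeneity of $F^+_{\boxtimes, n}$ of degree $n-1$ under $\deg \xi_i = i-1$ is \emph{equivalent} to $D_t$ being an endomorphism of the group functor $(\mathfrak{G}_+, \boxtimes)$, i.e.\ to $D_t(f \boxtimes g) = D_t(f) \boxtimes D_t(g)$ for all $f, g \in \mathfrak{G}_+(A)$.

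To verify that $D_t$ commutes with $\boxtimes$, one passes through the $S$-transform. A direct computation with the generating series $\phi_f(z) = z + \sum_{i \geq 2} x_i z^i$ yields
\[
\phi_{D_t(f)}(z) = \frac{1}{t}\phi_f(tz) + \frac{(1-t)z^2}{(1-z)(1-tz)},
\]
and taking compositional inverse and multiplying by $(1+z)/z$ expresses $S_A(D_t(f))$ in terms of $S_A(f)$ evaluated at $tz$ together with an explicit rational factor. Combining with the product formula $S_A(f \boxtimes g) = S_A(f) \cdot S_A(g)$ in $(\Lambda(A), \cdot)$ then reduces the commutation statement to a single identity of formal power series. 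The verification of this identity --- essentially, that the dilation of $\phi_f$ intertwines compositional inversion with multiplication in $\Lambda(A)$ up to the correction above --- is the main technical obstacle; once it is established, the claimed homogeneity is immediate. As a sanity check in low dimension, the recursion~\eqref{inv_Q} gives $F^+_{\boxtimes, 3}(\xi, \eta) = \xi_3 + \eta_3 + 3\,\xi_2 \eta_2$, which is manifestly homogeneous of degree $2 = n-1$ under $\deg \xi_i = \deg \eta_i = i - 1$.
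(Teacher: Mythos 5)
Your treatment of the first claim is correct and is the same argument as the paper's: the translation $\underline{x}\mapsto\underline{x}-\underline{1}$ moves the neutral element $\underline{1}$ of $\boxtimes$ to the origin, and the group-law axioms for $F_{\boxtimes}$ are inherited componentwise from Proposition~\ref{PropVStrafo}.

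The second claim is where there is a genuine gap. Your reduction is fine: homogeneity of $F^+_{\boxtimes,n}$ of weighted degree $n-1$ is equivalent to the dilations $D_t$ (i.e.\ $\xi_i\mapsto t^{i-1}\xi_i$ in the shifted coordinates $\xi_i=x_i-1$) being endomorphisms of $(\mathfrak{G}_+,\boxtimes)$, and your formula for $\phi_{D_t(f)}$ is a correct computation. But the one statement that carries the entire claim --- that $D_t$ commutes with $\boxtimes$ --- is precisely what you declare to be ``the main technical obstacle'' and do not prove; the $n\le 3$ check is consistency, not a proof. Moreover, the route you sketch cannot work as stated: there is no $f$-independent rational factor relating $S_A(D_t(f))$ to $S_A(f)(tz)$. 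Writing $S_A(f)(z)=1+s_1z+s_2z^2+\cdots$, a direct computation gives $s_1=-\xi_2$ and $s_2=3\xi_2+2\xi_2^2-\xi_3$, hence
\[
s_2\bigl(D_t(f)\bigr)-t^2\,s_2(f)=3t(1-t)\,\xi_2,
\]
which depends on $f$; so already at order $z^2$ the would-be universal correction factor does not exist (reflecting the fact that compositional inversion does not interact in any simple way with the additive correction term in your expression for $\phi_{D_t(f)}$). Without that identity the multiplicativity of $D_t$, and with it the homogeneity, remains unproved.

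For comparison: the paper's own proof of this proposition only spells out the translation argument (your first part) and does not rederive the homogeneity there; the mechanism it invokes for statements of this type is combinatorial, as in its proposition on $\boxtimes_{\operatorname{NS}}$, where homogeneity comes from the Kreweras-complement identity $|\pi|+|K(\pi)|=n+1$ applied to the non-crossing-partition expansion, with the case of $Q_{\boxtimes}$ deferred to~\cite{FMcK2013}. Note that for $Q_{\boxtimes}$ the situation is more delicate than a naive count of factors (e.g.\ $Q_2$ contains the monomial $x_1^2y_1^2$ with four factors), so the weight bookkeeping must be done after the substitution $x_i\mapsto 1+\xi_i$, where cancellations occur; any correct completion of your argument has to engage with this combinatorial structure (for instance via the moment--cumulant change of variables) rather than with the $S$-transform dilation.
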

\begin{proof}
The formal group law is obtained by a change of co-ordinates, which in our case is translation by $-\underline{1}$, i.e. $\underline{x}\mapsto \underline{x}-\underline{1}$ for $\underline{x}\in\mathfrak{G}$. 

Then the claim follows from $F_{\boxtimes}(\underline{x},\underline{y})=((\underline{x}+\underline{1})\boxtimes(\underline{y}+\underline{1}))-\underline{1}$, by projecting onto the individual components. 
\end{proof}
Let us consider the following examples for $F_{\boxtimes}$ and $F_{\boxtimes}^+$, where we set $x_1=y_1=0$ in the latter case:

For $n=1$, we have
$F_{\boxtimes,1}(x_1,y_1)=x_1+y_1+x_1y_1$, which is the multiplicative group law, and $F^+_{\boxtimes,1}\equiv0$.

For $n=2$, we have
$$
F_{\boxtimes,2}(x_1,x_2,y_1,y_2)=x_2+y_2+x_2 y_1^2+2x_2y_1+x_1^2y_2+2x_1y_2-x_1^2y_1^2-2x_1^2y_1-2x_1y_1^2-4x_1y_1.
$$ 
and $F^+_{\boxtimes,2}=x_2+y_2$.

For $n=3$, we have
\begin{eqnarray*}
F_{\boxtimes,3}(x_1,x_2,x_3,y_1,y_2,y_3)&=&x_3y_1^3+x_1^3y_3+x_2x_1y_2y_1-x_2x_1y_1^3+x_2x_1y_1y_2-x_1^3y_1y_2\\
&&-x_1x_2y_1^3-x_1^3y_2y_1-x_2x_1y_1^3+x_1x_2y_2y_1-x_1^3y_2y_1+2x_1^3y_1^3\\
&=&x_3y_1^3+x_1^3y_3+3x_1x_2y_1y_2-3x_1x_2y_1^3-3x_1^3y_1y_2+2x_1^3y_1^3.
\end{eqnarray*}
and  $F_{\boxplus,3}^+=x_3+y_3+3x_2y_2$.

The free polynomial algebra $k[x_1,x_2,x_3,\dots]$, in countably many commuting variables $x_i$ is an $\N$-graded algebra, with $\operatorname{deg}(x_i):=i$, $i\in\N^*$, i.e. $k[x_1,x_2,x_3,\dots]=\bigoplus_{n=0}^{\infty} H_n$, where $H_0= k\cdot 1$ and for $n\geq 1$, $H_n$  is the $k$-linear span by all monomials $x_{i_1}\cdots x_{i_m}$ of degree $i_1+\cdots+ i_m=n$. As an algebra it is graded connected, since $H_i\cdot H_j\subsetneq H_{i+j}$. Further, we denote the ring of Laurent polynomials by $k[x^{-1},x]$
and we shall use the convention $x_i:=x_1\otimes 1$ and $y_i:=1\otimes x_1$ in the following formul\ae.

\begin{thm}
The free additive and multiplicative convolution define pro-affine group schemes.
\begin{enumerate}
\item
$(\mathbb{A}^{\N^*},\boxplus)$ is a pro-unipotent affine group scheme. It is represented by 
the graded connected Hopf algebra $H_{\boxplus}=k[x_1,x_2,x_3,\dots]$, with the co-product given by
$$
\Delta_{\boxplus}(x_i):=P_n(x_1,\dots,x_n,y_1,\dots, y_n),
$$
and the co-unit $\varepsilon(x_i)=0$, $i\in\N^*$. The antipode $\alpha$ is calculated recursively, starting with $\alpha(x_1)=-x_1$.
\item
$(\mathfrak{G},\boxtimes)$ is a pro-affine group scheme, which is represented by 
the filtered Hopf algebra $H_{\boxtimes}=k[x_1^{-1},x_1,x_2,x_3,\dots]$, with the co-product given by
$$
\Delta_{\boxtimes}(x_i)=Q_n(x_1,\dots,x_n,y_1\dots,y_n),
$$
where $x_1$ and $x_1^{-1}$ are group-like elements,
i.e. $\Delta_{\boxtimes}(x_1^{(-1)})=x_1^{(-1)}\otimes x_1^{(-1)}$. The co-unit satisfies $\varepsilon(x_1)=1_k$ and $\varepsilon(x_i)=0$, for $i\geq2$. The antipode is calculated recursively, starting with $\alpha(x_1)=x_1^{-1}$.
\item
$(\mathfrak{G}_+,\boxtimes)$ is a pro-unipotent affine group scheme. It is representable by 
the graded connected Hopf algebra $H^+_{\boxtimes}=k[x_2,x_2,x_3,\dots]$, 
$\deg(x_i):=i-1$, for $i\geq2$, and with the co-product given by
$$
\Delta^+_{\boxtimes}(x_i)=F^+_n(x_2,\dots,x_n,y_2,\dots, y_n).
$$
The co-unit satisfies $\varepsilon(x_i)=0$, $i\geq2$, and the antipode is calculated recursively, starting with $\alpha(x_2)=-x_2$.
\end{enumerate}
\end{thm}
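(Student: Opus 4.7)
The approach is to invoke the equivalence between affine group schemes over $k$ and commutative Hopf algebras, translating the group-theoretic data already supplied by Propositions~\ref{PropVRtrafo} and~\ref{PropVStrafo} into Hopf-algebraic structure via Yoneda. For each of the three items I identify the representing algebra of the underlying set-valued functor, then transport the group law, identity, and inversion to a comultiplication, counit, and antipode.

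Identifying the representing algebras is straightforward. A $k$-algebra homomorphism $k[x_1,x_2,\ldots]\to A$ is determined by an arbitrary sequence of images, so $\mathbb{A}^{\N^*}$ is represented by $k[x_1,x_2,\ldots]$. The requirement $a_1\in A^{\times}$ in $\mathfrak{G}$ forces inversion of $x_1$ in the source, giving $k[x_1^{-1},x_1,x_2,\ldots]$, while $\mathfrak{G}_+$ imposes $a_1=1$, giving the quotient $k[x_2,x_3,\ldots]$. By the Yoneda lemma the coproducts $\Delta_{\boxplus}$, $\Delta_{\boxtimes}$, $\Delta^+_{\boxtimes}$ are forced on generators to be the universal polynomials $P_n$, $Q_n$, $F^+_n$, where $x_i\otimes 1$ and $1\otimes x_i$ play the roles of $x_i$ and $y_i$. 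Coassociativity of each $\Delta$ is equivalent, under Yoneda, to associativity of the corresponding group law, which was already established in the propositions.

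The counit $\varepsilon$ is the algebra map realising the neutral element, so $\varepsilon(x_i)=0$ for $\boxplus$ (since $\underline{0}=(0,0,\ldots)$); for $\boxtimes$ the component $x_1$ is group-like with $\Delta(x_1)=x_1\otimes x_1$ and $\varepsilon(x_1)=1_k$, while the higher-order variables carry the neutral value in the shifted coordinates set up by the preceding proposition on $F_{\boxtimes}$. The antipode $\alpha$ is defined recursively on generators by inversion in the group: for $\boxplus$ the structure $P_n=x_n+y_n+\tilde P_n(\text{lower order})$ lets one solve for $\alpha(x_n)$ in terms of $\alpha(x_1),\ldots,\alpha(x_{n-1})$; for $\boxtimes$ formula~\eqref{inv_Q} plays the same role, and in particular demands $\alpha(x_1)=x_1^{-1}$, which is exactly why the Laurent extension is required.

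It remains to verify the (pro-)unipotence and filtration claims. Proposition~\ref{PropVRtrafo} makes $P_n$ homogeneous of degree $n$ under $\deg(x_i):=i$, so $\Delta_{\boxplus}$ respects this grading; since $H^0=k$ the algebra $H_{\boxplus}$ is connected graded commutative, hence automatically conilpotent, and represents a pro-unipotent group scheme. The same argument applied to $F^+_n$ with $\deg(x_i):=i-1$ handles $H^+_{\boxtimes}$. For $H_{\boxtimes}$ the group-like $x_1$ blocks any positive grading, but the nested sub-Hopf algebras $k[x_1^{\pm 1},x_2,\ldots,x_n]$ furnish an exhaustive filtration matching the projective-limit description of $\mathfrak{G}$ from the earlier proposition. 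The only real obstacle is the recursive existence of the antipode, which at each stage reduces to a solvable polynomial equation thanks to the leading $x_n+y_n+\cdots$ (respectively $x_ny_1^n+x_1^ny_n+\cdots$) shape of the group polynomials; every remaining step is a mechanical transfer of already-verified group axioms through the Yoneda dictionary.
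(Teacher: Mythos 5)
The paper states this theorem without giving any proof, and your representability-plus-Yoneda argument is precisely the one it leaves implicit: the three set-valued functors are represented by the stated polynomial (resp.\ Laurent polynomial) algebras, the group laws of Propositions~\ref{PropVRtrafo} and~\ref{PropVStrafo} transport via Yoneda to coproducts given on generators by $P_n$, $Q_n$, $F^+_n$, coassociativity/counit/antipode correspond to associativity/neutral element/inversion, and the homogeneity of $P_n$ (resp.\ $F^+_n$) gives a connected grading and hence pro-unipotence in items 1 and 3, while item 2 only carries the filtration coming from the projective limit.

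One step as you wrote it does not go through, though the fault lies partly in the statement itself: for $H_{\boxtimes}$ you keep the coproduct $\Delta_{\boxtimes}(x_n)=Q_n(x\otimes 1,1\otimes x)$ but then try to justify $\varepsilon(x_i)=0$ for $i\geq 2$ by appealing to ``the shifted coordinates set up by the preceding proposition on $F_{\boxtimes}$''. You cannot mix the two presentations. The counit is the algebra map realising the neutral element of $(\mathfrak{G}(A),\boxtimes)$, which by Proposition~\ref{PropVStrafo} is $\underline{1}=(1,1,1,\dots)$, so the counit compatible with the coproduct $Q_n$ is $\varepsilon(x_i)=1$ for all $i$; concretely, $(\varepsilon\otimes\id)\Delta_{\boxtimes}(x_2)=\varepsilon(x_2)x_1^2+x_2-x_1^2$ equals $x_2$ only if $\varepsilon(x_2)=1$. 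The values $\varepsilon(x_1)=1$, $\varepsilon(x_i)=0$ for $i\geq2$ become correct only after the change of variables $x_i\mapsto x_i-1$ ($i\geq2$), in which case the coproduct must be rewritten in terms of the $F_{\boxtimes}$-polynomials, exactly as is done consistently in item 3. Either one-line fix repairs this. The remainder of your proof --- identification of the representing algebras, transport of the group axioms through Yoneda, the recursive antipode using the leading terms $x_n+y_n$ resp.\ $x_ny_1^n+x_1^ny_n$ together with formula~(\ref{inv_Q}), and pro-unipotence from connected gradedness over a field of characteristic zero --- is correct and is the intended argument.
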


\begin{thm}[Algebraic linearisation of $\boxtimes$] 
\label{main-iso}
There exists a natural isomorphism
$$
\log_{\boxtimes}:(\mathfrak{G}_+,\boxtimes)\rightarrow\mathbb{G}_a^{\N^*},
$$ 
which is given by
\begin{equation}
\label{Log_S_trafo}
\log_{\boxtimes}=\left(z\frac{d}{dz}\ln\right)\circ S,
\end{equation}
where $S$ is the $S$-transform~(\ref{S_trafo}). Hence, for all $A\in\mathbf{cAlg_k}$ and $f,g\in\mathfrak{G}_+(A)$ the following holds:
$$
{\log_{\boxtimes}}_A(f\boxtimes g)={\log_{\boxtimes}}_A(f)+{\log_{\boxtimes}}_A(f).
$$
\end{thm}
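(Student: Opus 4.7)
The plan is to factor $\log_{\boxtimes}$ as a composition of three natural transformations, each a natural isomorphism of abelian groups, so that the linearising identity follows by concatenating the homomorphism properties. Explicitly, I would write $\log_{\boxtimes}=D\circ\ln\circ S$ with $D:=z\frac{d}{dz}$ and $\ln$ the formal power series logarithm, and verify the three factors in turn.

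The first factor is supplied by the preceding proposition: $S$ restricts to a natural isomorphism $(\mathfrak{G}_+,\boxtimes)\xrightarrow{\sim}(\Lambda,\cdot)$. Indeed, for $f(z)=z+a_2z^2+\dots\in\mathfrak{G}_+(A)$ the compositional inverse has the form $f^{-1}(z)=z+\dots$, so $S_A(f)=\frac{1+z}{z}f^{-1}(z)\in 1+zA[[z]]=\Lambda(A)$; the multiplicativity $S_A(f\boxtimes g)=S_A(f)\cdot S_A(g)$ is then part of the cited commutative diagram. For the second factor, the series $\ln(1+u):=\sum_{n\geq 1}(-1)^{n+1}u^n/n$ converges in the $z$-adic topology whenever $u\in zA[[z]]$ and defines a natural isomorphism of abelian groups $(\Lambda,\cdot)\xrightarrow{\sim}(zA[[z]],+)$, the inverse being $\exp$; here only $\operatorname{char}(k)=0$ is used.

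Third, $D=z\frac{d}{dz}$ acts on $zA[[z]]$ by $\sum_{n\geq 1}c_n z^n\mapsto\sum_{n\geq 1}n c_n z^n$, and since every positive integer is invertible in $k$, this is a $k$-linear automorphism of the $A$-module $zA[[z]]$, hence a natural automorphism of the additive group $(zA[[z]],+)$. Identifying a power series with its coefficient sequence exhibits $(zA[[z]],+)$ as the group-valued functor $\mathbb{G}_a^{\N^*}$, and composing the three natural isomorphisms produces $\log_{\boxtimes}$ together with the additivity asserted in the theorem.

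No serious obstacle arises: the argument is essentially the standard passage from the multiplicative group of one-units in $A[[z]]$ to its Lie algebra via $\ln$. The factor $D$ is group-theoretically cosmetic — it is an automorphism of the target abelian group — so its inclusion cannot spoil linearisation. The genuine input is the nontrivial multiplicativity of $S$ with respect to $\boxtimes$, which is supplied by Voiculescu's theorem as recorded above; everything else is bookkeeping about formal power series in characteristic zero.
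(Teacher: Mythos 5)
Your argument is correct and is essentially the proof the paper intends (and leaves implicit): compose the proposition that $S$ is a natural isomorphism $(\mathfrak{G}_+,\boxtimes)\to(\Lambda,\cdot)$ with the ghost-map isomorphism $z\frac{d}{dz}\ln:\Lambda(A)\to A^{\N^*}$ recorded in the Witt/$\lambda$-ring section, your only addition being the explicit factorisation of the ghost map into $\ln$ followed by the automorphism $z\frac{d}{dz}$, which uses exactly the characteristic-zero hypothesis. No gap; this matches the paper's route.
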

\begin{cor}[Algebraic linearisation of $\boxtimes_{\operatorname{NS}}$]
There exists a natural isomorphisms
$$
\log_{\boxtimes_{\operatorname{NS}}}:(\mathfrak{G}_+,\boxtimes_{\operatorname{NS}})\rightarrow\mathbb{G}_a^{\N^*},
$$ 
which is given by
\begin{equation}
\label{Log_F_trafo}
\log_{\boxtimes_{\operatorname{NS}}}=\left(z\frac{d}{dz}\ln\right)\circ F,
\end{equation}
where $F$ is the free $F$-transform~(\ref{F-trafo}). Hence, for all $A\in\mathbf{cAlg_k}$ and $f,g\in\mathfrak{G}_+(A)$ the following holds:
$$
{\log_{\boxtimes_{\operatorname{NS}}}}_A(f\boxtimes_{\operatorname{NS}} g)={\log_{\boxtimes_{\operatorname{NS}}}}_A(f)+{\log_{\boxtimes_{\operatorname{NS}}}}_A(g).
$$
\end{cor}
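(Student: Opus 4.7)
The plan is to mirror the proof of Theorem \ref{main-iso}, substituting the free $F$-transform for the $S$-transform throughout. The preceding proposition already establishes that $F$ restricts to a natural isomorphism of abelian groups $F_A\colon(\mathfrak{G}_+(A),\boxtimes_{\operatorname{NS}}) \xrightarrow{\sim} (\Lambda(A),\cdot)$, so what remains is to check that the logarithmic derivative $L:=z\frac{d}{dz}\ln$ defines a natural isomorphism $(\Lambda(A),\cdot) \xrightarrow{\sim} \mathbb{G}_a^{\N^*}(A)$. The desired $\log_{\boxtimes_{\operatorname{NS}}}$ then arises as the composite $L\circ F$, and converting the multiplicative structure on $\Lambda(A)$ to the additive one on $\mathbb{G}_a^{\N^*}$ gives precisely the claimed linearisation.

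To make $L$ precise I would argue as follows: for $f=1+\sum_{n\geq 1} a_n z^n \in \Lambda(A)$, the formal logarithm $\ln f := \sum_{m\geq 1}\frac{(-1)^{m+1}}{m}(f-1)^m$ is a well-defined element of $zA[[z]]$, since $k$ has characteristic zero and the $z$-adic filtration makes each coefficient a finite sum. The standard identity $(\ln(fg))'=f'/f+g'/g$ then yields $L(fg)=L(f)+L(g)$, so $L$ is a homomorphism into $(zA[[z]],+)$, which we identify with $\mathbb{G}_a^{\N^*}(A)$ via the bijection $\sum_{n\geq 1}c_n z^n\leftrightarrow(c_1,c_2,\ldots)$. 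An inverse is given explicitly by sending $h=\sum_{n\geq 1}c_n z^n$ to $\exp\bigl(\sum_{n\geq 1}\frac{c_n}{n}z^n\bigr)\in\Lambda(A)$; standard formal-power-series manipulations show it is a two-sided inverse of $L$. Naturality in $A$ is automatic because every ingredient is given by universal power series in the coefficients.

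There is no substantive obstacle beyond what is already in place for the $S$-transform: the algebraic content lives in the previous proposition, and the formal calculus of $\ln$ and $\exp$ is entirely standard once the characteristic-zero hypothesis is available. The only step requiring a little care is verifying coefficient-wise convergence of $\ln f$ on $\Lambda(A)$; once that is settled, both the homomorphism property and the inverse construction are one-line checks, and the identity ${\log_{\boxtimes_{\operatorname{NS}}}}_A(f\boxtimes_{\operatorname{NS}} g)={\log_{\boxtimes_{\operatorname{NS}}}}_A(f)+{\log_{\boxtimes_{\operatorname{NS}}}}_A(g)$ follows by applying $L$ to the identity $F_A(f\boxtimes_{\operatorname{NS}}g)=F_A(f)\cdot F_A(g)$ of the preceding proposition.
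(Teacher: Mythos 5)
Your proposal is correct and follows essentially the same route the paper takes: the corollary is obtained exactly as in Theorem~\ref{main-iso}, by composing the natural group isomorphism $F_A\colon(\mathfrak{G}_+(A),\boxtimes_{\operatorname{NS}})\to(\Lambda(A),\cdot)$ from the preceding proposition with the ghost-map isomorphism $z\frac{d}{dz}\ln\colon(\Lambda(A),\cdot)\to(A^{\N^*},+)$, whose formal $\ln/\exp$ calculus you spell out correctly using the characteristic-zero hypothesis.
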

\begin{rem}
The version  Mastnak and Nica have given in~\cite{MN} is for $\boxtimes_{NS}$. 

In higher dimensions the free multiplicative convolution of cumulants, as originally introduced by Nica and Speicher~\cite{NS} is not abelian and therefore not linearisable, cf.~\cite{FMcK2013,FMcK2015}.
\end{rem}

The situation is different for the free additive convolution, which is commutative in every dimension as it is given by a commutative formal group law.  Therefore, there exists always a natural  isomorphism  
$\log_{\boxplus}:(\mathbb{A}^{\N^*},\boxplus)\rightarrow\mathbb{G}_a^{\N^*}$ which is, up to a linear transformation, given by a lower triangular matrix. In the one-dimensional case it corresponds to Voiculescu's $R$-transform~\cite{V1985,V1986}, i.e.
$$
\log_{\boxplus}\sim R.
$$ 
For an alternative Lie theoretic derivation in the general case, one should consider~\cite{FMcK2015}.
\begin{cor}
There exists a natural isomorphism of pro-unipotent affine group schemes
\begin{equation}
\operatorname{EXP}:(\mathbb{A}^{\N^*},\boxplus)\rightarrow(\mathfrak{G}_+,\boxtimes), 
\end{equation}
which is given by
\begin{equation}
\operatorname{EXP}:=S^{-1}\circ(z\frac{d}{dz}\ln)^{-1}\circ R,
\end{equation}
where $R$ and $S$ are the $R-$ and $S$-transform, respectively. For $A\in\mathbf{cAlg}$ and $f,g\in\mathbb{A}^{\N^*}(A)$ the following holds:
$$
\operatorname{EXP}_A(f\boxplus g)=\operatorname{EXP}_A(f)\boxtimes\operatorname{EXP}_A(g).
$$
\end{cor}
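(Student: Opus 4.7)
The plan is to realize $\operatorname{EXP}$ as a composition of two natural isomorphisms already established in the paper, with no new content required beyond routine unwinding. By the $R$-transform theorem, $R:(\mathbb{A}^{\N^*},\boxplus)\to(\mathbb{A}^{\N^*},+)=\mathbb{G}_a^{\N^*}$ is a natural isomorphism of group-valued functors; by Theorem~\ref{main-iso}, $\log_{\boxtimes}=(z\frac{d}{dz}\ln)\circ S:(\mathfrak{G}_+,\boxtimes)\to\mathbb{G}_a^{\N^*}$ is another. I would therefore define $\operatorname{EXP}:=\log_{\boxtimes}^{-1}\circ R$; this is a composition of natural isomorphisms and is hence itself a natural isomorphism $(\mathbb{A}^{\N^*},\boxplus)\to(\mathfrak{G}_+,\boxtimes)$. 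Expanding $\log_{\boxtimes}^{-1}=S^{-1}\circ\bigl(z\frac{d}{dz}\ln\bigr)^{-1}$ recovers the stated formula.

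The homomorphism identity then drops out by chasing through the two intermediate groups: for $f,g\in\mathbb{A}^{\N^*}(A)$,
$$
\operatorname{EXP}_A(f\boxplus g)=\log_{\boxtimes}^{-1}\bigl(R_A(f\boxplus g)\bigr)=\log_{\boxtimes}^{-1}\bigl(R_A(f)+R_A(g)\bigr)=\operatorname{EXP}_A(f)\boxtimes\operatorname{EXP}_A(g),
$$
where the last step uses that $\log_{\boxtimes}^{-1}$ is a group isomorphism from $(\mathbb{A}^{\N^*},+)$ to $(\mathfrak{G}_+,\boxtimes)$. Naturality of $\operatorname{EXP}$ in $A$ is inherited from naturality of $R$ and $\log_{\boxtimes}^{-1}$, and since both source and target were identified as pro-unipotent affine group schemes in the Hopf-algebra theorem, $\operatorname{EXP}$ is an isomorphism of pro-unipotent affine group schemes.

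There is no serious obstacle: the corollary is a formal consequence of Theorem~\ref{main-iso} and the $R$-transform theorem, and the bulk of the work has been done in establishing $\log_{\boxtimes}$. The one minor verification I would spell out is that $\bigl(z\frac{d}{dz}\ln\bigr)^{-1}$ makes sense as a natural transformation $\mathbb{G}_a^{\N^*}\to\Lambda(\bullet)$: given $c(z)=\sum_{n\geq 1}c_n z^n$, the equation $zh'(z)=h(z)\,c(z)$ with $h(0)=1$ determines $h(z)=1+\sum_{n\geq 1}h_n z^n$ recursively, with each $h_n$ a polynomial with integer coefficients in $c_1,\dots,c_n$; this yields an inverse that is visibly functorial in $A\in\mathbf{cAlg}_k$ and compatible with the identification $\mathbb{G}_a^{\N^*}(A)\cong zA[[z]]$ by coefficient extraction.
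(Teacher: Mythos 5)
Your proof is correct and is essentially the paper's own (implicit) argument: the corollary is stated without separate proof precisely because it is the composition $\log_{\boxtimes}^{-1}\circ R$ of the two linearising natural isomorphisms from Theorem~\ref{main-iso} and the $R$-transform theorem, exactly as you set it up. One tiny slip in your final verification: the coefficients $h_n$ of the inverse ghost map are rational, not integral (e.g.\ $h_2=\tfrac{1}{2}(c_1^2+c_2)$), which is harmless here since $k$ has characteristic zero.
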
   

\section{Co-rings and comonads}
\subsection{Witt vectors}
In this subsection, we use the following references~\cite{I1979,H,MuB,Y}.  

\begin{prop}[Witt ring]
There exist universal polynomials $S_{\W}=({S_{\W}}_n)_{n\in\N^*}$ and $P_{\W}=({P_{\W}}_n)_{n\in\N^*}$ with ${S_{\W}}_n,{P_{\W}}_n\in\Z[x_1,\dots,x_n,y_1,\dots,y_n]$ 
which satisfy:
\begin{itemize}
\item $S_1(x_1,y_2)=x_1+y_1$
\item $S_n(x_1,\dots,x_n,y_1,\dots,y_n)=S_n(y_1,\dots,y_n,x_1,\dots,x_n)$ (commutative / symmetric)
\item $S_n(x,y)=x_n+y_n+\tilde{S}_{\W}(x_1,\dots,x_{n-1},y_1,\dots,y_{n-1})$ (formal group law)
\item $P_1(x_1,y_1)=x_1\cdot y_1$
\item $P_n(x_1,\dots,x_n,y_1,\dots,y_n)=P_n(y_1,\dots,y_n,x_1,\dots,x_n)$ (commutative)
\end{itemize}
\end{prop}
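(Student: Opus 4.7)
The plan is to construct $S_{\W}$ and $P_{\W}$ via the classical ghost-component (Witt polynomial) formalism. For each $n \in \N^*$ introduce the $n$-th Witt polynomial
$$w_n(X_1,\dots,X_n) := \sum_{d \mid n} d\, X_d^{n/d} \;\in\; \Z[X_1,\dots,X_n],$$
so that $w_n = n X_n + (\text{polynomial in } X_1,\dots,X_{n-1})$. Working first over $\Q[x_1,x_2,\dots]$, the transformation $(X_1,\dots,X_n)\mapsto(w_1,\dots,w_n)$ is invertible (lower-triangular with invertible diagonal $n$), so every $X_n$ can be recovered as a polynomial in $w_1,\dots,w_n$ with rational coefficients. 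Define ${S_{\W}}_n$ and ${P_{\W}}_n$ uniquely by demanding ghost-component additivity resp.\ multiplicativity:
$$w_n\bigl({S_{\W}}_1,\dots,{S_{\W}}_n\bigr) = w_n(\underline{x}) + w_n(\underline{y}),\qquad w_n\bigl({P_{\W}}_1,\dots,{P_{\W}}_n\bigr) = w_n(\underline{x})\cdot w_n(\underline{y}).$$

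Once this is set up, the symmetry $(x,y)\leftrightarrow(y,x)$ of the right-hand sides forces the commutativity/symmetry of ${S_{\W}}_n$ and ${P_{\W}}_n$. The formal-group-law form of ${S_{\W}}_n$ follows directly by isolating the leading term: since $w_n$ contributes $nX_n$ linearly with all other terms depending only on $X_1,\dots,X_{n-1}$, rearranging the defining identity for the sum yields
$$n\,{S_{\W}}_n = n x_n + n y_n + (\text{polynomial in } x_{<n}, y_{<n}, {S_{\W}}_{<n}),$$
and an induction in $n$ combined with division by $n$ gives ${S_{\W}}_n = x_n + y_n + \tilde{S}_{\W,n}(x_1,\dots,x_{n-1},y_1,\dots,y_{n-1})$. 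The base case $S_1 = x_1+y_1$ and $P_1 = x_1 y_1$ are immediate from $w_1 = X_1$.

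The main obstacle, as always with Witt vectors, is integrality: a priori ${S_{\W}}_n,{P_{\W}}_n \in \Q[\underline{x},\underline{y}]$, and one must show the coefficients actually lie in $\Z$. This is the substantive step, and the standard route is Dwork's lemma: a sequence $(f_n) \in \Q[\underline{x},\underline{y}]$ arises from the Witt inversion of an integral ghost sequence if and only if the congruences
$$f_n(\underline{x},\underline{y}) \equiv f_{n/p}(\underline{x}^p,\underline{y}^p) \pmod{p^{v_p(n)}\,\Z_{(p)}[\underline{x},\underline{y}]}$$
hold for every prime $p$ dividing $n$. Applied to the ghost sequences $w_n(\underline{x})+w_n(\underline{y})$ and $w_n(\underline{x})w_n(\underline{y})$, these congruences reduce to the elementary fact $(a+b)^p \equiv a^p + b^p \pmod p$ and its multiplicative counterpart, both checked by an induction on $n$ using that each $w_n$ is itself built from $p$-th power sums. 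This yields ${S_{\W}}_n,{P_{\W}}_n \in \Z[\underline{x},\underline{y}]$ as required. The associativity, distributivity and other ring-axiom identities, though not explicitly demanded by the statement, follow from the corresponding identities on ghost components, since over $\Q$ the Witt-polynomial map is a ring isomorphism.
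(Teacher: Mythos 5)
Your construction via ghost components, with integrality supplied by Dwork's lemma, is correct and is essentially the approach the paper itself takes: the paper states the result as classical (citing Illusie, Hazewinkel, Mumford, Yau) and characterises $S_{\W}$ and $P_{\W}$ precisely by the requirement that the ghost map $w$ be a natural ring isomorphism. You simply fill in the integrality step that the paper delegates to the references; the only loose point is your phrasing of the Dwork criterion, where the verification for the ghost sequences really rests on the congruence $w_n(\underline{X})\equiv w_{n/p}(\underline{X}^p)\pmod{p^{v_p(n)}}$ rather than directly on $(a+b)^p\equiv a^p+b^p$, but this is a presentational nuance, not a gap.
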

Let $\W$ denote the functor $\W:\mathbf{cAlg_k}\rightarrow\mathbf{cRing_k}$, whose composition with the forgetful functor to $\mathbf{Set}$, is $\W(A):=A^{\N^*}$.
\begin{prop}
$(\W(A),+_{\W},\cdot_{\W},(0,0,0,\dots),(1,0,0,0,\dots))$ is a commutative unital ring with multiplicative unit $(1,0,\dots,0,\dots)$, and with 
the {\bf addition} and {\bf multiplication} defined by:
\begin{eqnarray}
\label{Ws}
({x}+_{\W}{y})_n & := & {S_{\W}}_n({x},{y}), \\
\label{Wm}
({x}\cdot_{\W}{y})_n & := & {P_{\W}}_n({x},{y}),
\end{eqnarray}
\end{prop}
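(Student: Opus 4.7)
The plan is to verify the ring axioms via the classical device of \emph{ghost components} (Witt polynomials). Define, for each $n\in\N^*$, the polynomial
\[
w_n(x_1,\ldots,x_n):=\sum_{d\mid n} d\, x_d^{n/d}\in\Z[x_1,\ldots,x_n],
\]
and assemble these into a natural map $w:\W(A)\to A^{\N^*}$ by $w(\underline{x})=(w_n(x_1,\ldots,x_n))_{n\in\N^*}$, where the target carries the componentwise ring structure inherited from $A$. The universal polynomials ${S_{\W}}_n$ and ${P_{\W}}_n$ of the previous proposition are characterised by the requirement that $w$ becomes a ring homomorphism, i.e.
\[
w_n({S_{\W}}(\underline{x},\underline{y}))=w_n(\underline{x})+w_n(\underline{y}),\qquad w_n({P_{\W}}(\underline{x},\underline{y}))=w_n(\underline{x})\cdot w_n(\underline{y}).
\]
First I would recall (or cite from \cite{H,MuB}) that these equations recursively determine unique polynomials with rational coefficients, and that Dwork's lemma forces them to lie in $\Z[x_1,\ldots,x_n,y_1,\ldots,y_n]$; this gives the well-definedness of $+_{\W}$ and $\cdot_{\W}$ on $\W(A)$ for every $A\in\mathbf{cAlg}_k$.

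The ring axioms then reduce to polynomial identities between universal polynomials over $\Z$. The key observation is that when $A$ is a $\Q$-algebra (equivalently, when tensoring with $\Q$), the map $w_A:\W(A)\to A^{\N^*}$ is bijective: invert the triangular system $w_n=n x_n+(\text{polynomial in }x_1,\ldots,x_{n-1})$ componentwise using the fact that $n\in A^{\times}$. Hence in the $\Q$-algebra case $w_A$ is a ring isomorphism onto the componentwise ring, so associativity, commutativity and distributivity of $+_{\W}$ and $\cdot_{\W}$ hold, and $(0,0,\ldots)$ and $(1,0,0,\ldots)$ verify the axioms of additive and multiplicative neutral element (the latter because $w_n(1,0,\ldots,0)=1$ for all $n$, so $(1,0,\ldots)$ maps to the unit of $A^{\N^*}$).

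Each such axiom is an equality of polynomials in $\Z[x_1,x_2,\ldots,y_1,y_2,\ldots,z_1,z_2,\ldots]$ (or fewer variables). Since this identity holds in $\Q[\underline{x},\underline{y},\underline{z}]$, which contains $\Z[\underline{x},\underline{y},\underline{z}]$ as a subring, it already holds over $\Z$; specialising to elements of an arbitrary $A\in\mathbf{cAlg}_k$ transports the axiom to $\W(A)$. The same remark applies to the statements about the neutral elements and to the existence of additive inverses, which arise by recursively solving $S_{\W,n}(\underline{x},\underline{y})=0$ (again a triangular system) and observing that the solution has integer coefficients by the same Dwork-type argument applied to $w_n(-\underline{x})+w_n(\underline{x})=0$.

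The main obstacle is therefore not the axiom verification, which is essentially formal once the ghost map is available, but the integrality statement: that the polynomials determined a priori in $\Q[\underline{x},\underline{y}]$ actually have integer coefficients. This is the substantive input of Dwork's congruence criterion, which I would invoke from \cite{H} rather than reprove. With that step granted, the rest of the proof proceeds as the diagrammatic transport of the componentwise ring structure on $A^{\N^*}$ through the ghost map, naturally in $A$, and the whole argument exhibits $\W$ as a functor $\mathbf{cAlg}_k\to\mathbf{cRing}_k$ with the claimed structure.
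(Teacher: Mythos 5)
Your proof is correct and follows essentially the same route the paper takes: the paper states this proposition without proof, deferring to the standard references, and immediately afterwards characterises $S_{\W}$ and $P_{\W}$ exactly as you do, by requiring the ghost map $w_A:\W(A)\to A^{\N^*}$ to be a (natural) ring isomorphism, with integrality of the universal polynomials quoted from the literature. Your reduction of the ring axioms to polynomial identities checked over $\Q$-algebras (where $w$ is bijective by triangularity) is the standard argument of the cited sources, and in the paper's setting of $\mathbf{cAlg}_k$ with $\operatorname{char}k=0$ every $A$ is already a $\Q$-algebra, so the transport of structure applies directly.
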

The polynomials $S_{\W}$ and $P_{\W}$ can be determined by the requirement that the {\bf ghost map} 
\begin{equation}
w_A:\mathbb{W}(A)\rightarrow A^{\N^*}
\end{equation}
is a natural isomorphism of commutative rings. The map $w$ is given by a vector of maps  $w=(w_n)_{n\in\N^*}$, with components 
\begin{eqnarray}
\label{ghost_map}
w_n&:&\W(A)\rightarrow A,\nonumber\\
{x}&\mapsto& w_n(x_1,\dots,x_n):=\sum_{d|n}d x_d^{n/d},
\end{eqnarray} 
which are ring homomorphisms. The $w_n(x_1,\dots,x_n)\in\Z[x_1,\dots,x_n]$ are the {\bf Witt polynomials}.
From formula~(\ref{ghost_map}), one obtains for $n=1,\dots,5$:
\begin{eqnarray*}
w_1&=&x_1,\\
w_2&=&x_1^2+2x_2,\\
w_3&=&x_1^3+3x_3,\\
w_4&=&x_1^4+2x_2^2+4x_4,\\
w_5&=&x_1^5+3x_5.
\end{eqnarray*}
\begin{prop}[\cite{MuB}]
The Witt ring is represented by $\Z[w_1,\dots,w_n,\dots]$, with the ``co-addition" and co-multiplication given by
\begin{eqnarray*}
\Delta_+(w_n)&:=&w_n\otimes1+1\otimes w_n,\\
\Delta_{\cdot}(w_n)&:=&w_n\otimes w_n.
\end{eqnarray*} 
\end{prop}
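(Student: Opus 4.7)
The plan is to transport a manifestly obvious corepresentation of the product ring scheme $A\mapsto(A^{\N^*},+,\cdot)$ to the Witt ring scheme $\W$ along the ghost map, which was set up in the preceding proposition to be a natural isomorphism of commutative rings.

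First, I would identify the product ring-valued functor $A\mapsto(A^{\N^*},+,\cdot)$ as corepresented by $\Z[w_1,w_2,\dots]$ with the stated co-operations. This reduces to the one-variable case: the affine line $\mathbb{A}^1=\spec\Z[t]$ carries the tautological ring-scheme structure whose co-operations are $\Delta_+(t)=t\otimes1+1\otimes t$ (primitive, recovering addition on $A$-points) and $\Delta_\cdot(t)=t\otimes t$ (group-like, recovering multiplication). A countable product of affine ring schemes is corepresented by the coproduct in $\mathbf{cAlg}_{\Z}$, i.e.\ the tensor product of the representing algebras, and for free polynomial algebras this is simply $\Z[w_1,w_2,\dots]$ with each $w_n$ independently obeying the one-variable co-operations.

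Next, I would invoke the ghost map $w:\W\to\mathbb{A}^{\N^*}$, $x\mapsto(w_n(x))_{n\in\N^*}$, which by the preceding proposition is a natural isomorphism of commutative ring functors (the polynomials $S_{\W},P_{\W}$ are defined precisely to make this so). By Yoneda, a natural isomorphism of representable ring-valued functors dualises to an isomorphism of their corepresenting commutative bialgebras preserving both coproducts. Transporting the corepresentation above through $w$ therefore endows $\W$ with the Hopf-ring algebra $\Z[w_1,w_2,\dots]$ and precisely the co-operations $\Delta_+(w_n)=w_n\otimes1+1\otimes w_n$, $\Delta_\cdot(w_n)=w_n\otimes w_n$, as asserted.

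The only subtle point I expect is foundational: the ghost map is a genuine ring isomorphism only once the positive integers act invertibly on $A$, whereas over $\Z$-algebras it is merely a natural ring monomorphism. In the characteristic-zero setting fixed in the paper this issue is absent; more intrinsically, the triangular, $\Q$-invertible change of coordinates between $(x_1,x_2,\dots)$ and $(w_1,w_2,\dots)$ shows that the bialgebra structure on $\Z[w_1,w_2,\dots]$ specified above uniquely determines the Witt bialgebra structure on $\Z[x_1,x_2,\dots]$, so the claim is consistent and yields the integral representing object of $\W$ as in \cite{MuB}.
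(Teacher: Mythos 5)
The paper offers no proof of this proposition at all --- it is quoted from Mumford \cite{MuB} --- so there is no internal argument to compare yours with; judged on its own, your proof is correct in the setting the paper actually works in, and it is the natural argument. Your two steps (the componentwise ring functor $A\mapsto(A^{\N^*},+,\cdot)$ is corepresented by $\Z[w_1,w_2,\dots]$ with each $w_n$ primitive for $\Delta_+$ and group-like for $\Delta_\cdot$; then transport along the ghost map) are precisely the Yoneda-dual of the fact recorded in the two neighbouring propositions of the paper, namely that each ghost component $w_n\colon\W(A)\to A$ is a ring homomorphism natural in $A$, i.e. $w_n(x+_{\W}y)=w_n(x)+w_n(y)$ and $w_n(x\cdot_{\W}y)=w_n(x)\,w_n(y)$; since the paper fixes characteristic zero, every $A$ is a $\Q$-algebra, $w_A$ is bijective and $k[w_1,w_2,\dots]=k[x_1,x_2,\dots]$, so the transport is legitimate. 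Two small repairs: (i) your aside that over arbitrary $\Z$-algebras the ghost map is ``merely a natural ring monomorphism'' is false --- injectivity already fails in the presence of torsion (over $\Z/2\Z$ the Witt vector $(0,1,0,0,\dots)$ has all ghost components zero); it is injective for torsion-free $A$ and bijective for $\Q$-algebras, and since your argument only uses the latter case nothing breaks; (ii) read integrally, the statement needs exactly the gloss of your final paragraph, which deserves to be made explicit: over $\Z$ the representing object of $\W$ is $\Z[x_1,x_2,\dots]$, of which $\Z[w_1,w_2,\dots]$ is a proper subring (e.g. $x_2=(w_2-w_1^2)/2$ does not lie in it), and the displayed formulas hold integrally because $w_n(S_{\W}(x,y))=w_n(x)+w_n(y)$ and $w_n(P_{\W}(x,y))=w_n(x)\,w_n(y)$ are identities of integer polynomials, forced by the rational case together with torsion-freeness of the polynomial ring; the existence of the integral $S_{\W},P_{\W}$ themselves is the content of the paper's first proposition of this subsection and is what your ``uniquely determines'' step quietly relies on.
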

The above Proposition states that the ring structure is not defined by a Hopf algebra,  but by a co-ring, which translates into component-wise addition and multiplication.
\begin{prop}
For $A\in\mathbf{cAlg_k}$, the
ghost map defines a natural isomorphism of ring valued functors, i.e.
$$
w_A:(\W(A),+_{\W},\cdot_{\W})\rightarrow(A^{\N^*},+,\cdot_{\operatorname{H}})
$$ 
is a natural isomorphism of commutative unital rings.
\end{prop}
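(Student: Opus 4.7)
Three things need checking: (i) each $w_A$ is a ring homomorphism, (ii) each $w_A$ is a bijection, and (iii) the family $\{w_A\}$ is natural in $A$. Points (i) and (iii) come essentially by construction. For (i), the preceding proposition characterises $S_\W$ and $P_\W$ by the identities
$$w_n\bigl({S_\W}_n(\underline{x},\underline{y})\bigr) = w_n(\underline{x}) + w_n(\underline{y}), \qquad w_n\bigl({P_\W}_n(\underline{x},\underline{y})\bigr) = w_n(\underline{x})\cdot w_n(\underline{y}),$$
for every $n \in \N^*$, so by definition $w_A$ sends $+_\W$ to componentwise $+$ and $\cdot_\W$ to the Hadamard product $\cdot_{\operatorname{H}}$; the units match since $w_n(0,\ldots,0) = 0$ and $w_n(1,0,\ldots,0) = 1$. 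For (iii), each $w_n$ lies in $\Z[x_1,\ldots,x_n]$, and any morphism $f : A \to B$ in $\mathbf{cAlg}_k$ commutes with evaluation of such a polynomial, which is exactly the commutativity of the naturality square for $w$.

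The real substance of the statement is (ii). I would invert the ghost map directly, exploiting the triangular form of the Witt polynomials. Isolating the $d = n$ summand in $w_n(x_1,\ldots,x_n) = \sum_{d\mid n} d\, x_d^{n/d}$ gives
$$w_n = n\, x_n + \sum_{\substack{d\mid n \\ d < n}} d\, x_d^{n/d},$$
and therefore the recursion $x_1 := w_1$, together with
$$x_n := \frac{1}{n}\Bigl(w_n - \sum_{\substack{d\mid n \\ d < n}} d\, x_d^{n/d}\Bigr) \quad (n \geq 2),$$
recovers the $x_i$'s uniquely from $w_1,\ldots,w_n$. Because $k$ has characteristic zero, $A$ is a $\Q$-algebra, so every positive integer $n$ is invertible in $A$ and the recursion yields a well-defined two-sided inverse $w_A^{-1} : A^{\N^*} \to \W(A)$. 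This inverse is itself given by polynomials (now with coefficients in $\Q \subset A$), so it is automatically natural in $A$, and the natural transformation $w$ upgrades to a natural isomorphism of ring-valued functors.

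The main obstacle worth flagging is precisely the division by $n$ in the inversion step. In positive or mixed characteristic the ghost map remains a ring homomorphism but fails to be bijective in general, and the nontrivial arithmetic point in the theory of Witt vectors is then the integrality of the coefficients of $S_\W$ and $P_\W$ despite the rational denominators that appear when $w$ is inverted. Under the standing hypothesis that $k$ has characteristic zero this subtlety is absent, and the recursive inversion above, together with (i) and (iii), completes the proof.
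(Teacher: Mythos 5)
Your argument is correct, but note that the paper itself offers no proof of this proposition: it is quoted as a standard fact of the Witt-vector literature (with references to Illusie, Hazewinkel, Mumford, Yau), the polynomials $S_{\W}$, $P_{\W}$ being \emph{defined} by the requirement that the ghost map transport $(+_{\W},\cdot_{\W})$ to componentwise addition and Hadamard multiplication. So your parts (i) and (iii) are indeed essentially by construction (your attribution of the transport identities to the ``preceding proposition'' is slightly off --- that proposition only records symmetry and the formal-group-law shape; the defining requirement appears in the text right after it --- and strictly the identities should read $w_n\bigl({S_\W}_1(\underline{x},\underline{y}),\dots,{S_\W}_n(\underline{x},\underline{y})\bigr)=w_n(\underline{x})+w_n(\underline{y})$, i.e.\ $w_n$ is evaluated on the whole vector, but these are cosmetic points). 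What you add beyond the paper is the explicit bijectivity argument: isolating the $d=n$ term in $w_n=\sum_{d\mid n}d\,x_d^{n/d}$ and inverting recursively, which is valid precisely because $k$ has characteristic zero so every $n$ is invertible in $A$, and which yields an inverse given by polynomials with coefficients in $\Q$, hence itself natural. You also correctly identify that the genuinely nontrivial content hidden in the cited references --- the integrality of $S_{\W}$ and $P_{\W}$ over $\Z$ despite the rational denominators in $w^{-1}$ --- is not needed for the statement as formulated over $\mathbf{cAlg}_k$ with $\operatorname{char}k=0$. In short: a correct, self-contained proof of a statement the paper delegates to the literature.
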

\subsection{The $\lambda$-ring of formal power series}
References for this part are~\cite{FMcK2012,H,Y}.
The functor $\Lambda:\mathbf{cRing}_k\rightarrow\mathbf{cRing}_k$, when composed with the forgetful functor to $\mathbf{Ab}$, the category of abelian groups, has 
$$
\Lambda(A):=(1+zA[[z]])^{\times}=\{f(z)=1+a_1z+a_2z^2+\dots~|~a_i\in A\}
$$
as the underlying abelian group, with the {``addition"} $+_{\Lambda}$ given by the usual multiplication of power series, i.e. $+_{\Lambda}:=\cdot$ and the ``zero" equal to $1$.

In order to have a ring structure, the {``multiplication"} $\cdot_{\Lambda}$, is defined as follows: for $f,g\in\Lambda(A)$ consider the formal factorisations 
$f(z)=\prod_{i=1}^{\infty}(1-x_iz)^{-1}$ and $g(z)=\prod_{j=1}^{\infty}(1-y_jz)^{-1}$,
and define 
$$
(1-at)^{-1}\cdot_{\Lambda}(1-bt)^{-1}:=(1-abt)^{-1},
$$ 
which is then extended bilinearily, i.e. one uses
$$
\bullet_1\cdot_{\Lambda}(\bullet_2+_{\Lambda}\bullet_3)=\bullet_1\cdot_{\Lambda}\bullet_2+_{\Lambda}\bullet_1\cdot_{\Lambda}\bullet_3.
$$
For every $A\in\mathbf{cAlg}_k$, the {\bf ghost map}
\begin{equation}
z\frac{d}{dz}\ln:\Lambda(A)\rightarrow A^{\N^*},
\end{equation}
is an isomorphism of commutative rings, cf. e.g.~\cite{I1979,H,MuB}. It is well known that $\Lambda$, as a group-valued functor, is representable by the Hopf algebra of complete symmetric functions $\mathbf{Symm}$, cf~\cite{H}. 
\begin{prop}[e.g.\cite{I1979,H,Y}]
There exists a natural isomorphism, the {\bf Artin-Hasse exponential map}, $E:\W\rightarrow\Lambda$, which for $A\in\mathbf{cAlg_k}$, is given by
$$
E_A:\W(A)\rightarrow(A),\quad (x_n)_{n\in\N^*}\mapsto\prod_{n=1}^{\infty}\frac{1}{1-x_nz^n}.
$$
\end{prop}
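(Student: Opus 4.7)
The strategy is to reduce the statement to the two ring isomorphisms already stated in the preceding propositions: the Witt ghost map $w_A:\W(A)\to A^{\N^*}$ and the logarithmic derivative $\ell_A:=z\frac{d}{dz}\ln:\Lambda(A)\to A^{\N^*}$. If I verify the triangle identity $\ell_A\circ E_A=w_A$, then $E_A=\ell_A^{-1}\circ w_A$ is automatically an isomorphism of commutative unital rings, and the whole statement reduces to a single explicit power-series computation. The well-definedness of $E_A$ as a map into $\Lambda(A)$ is clear, because only finitely many factors of $\prod_{n\geq 1}(1-x_n z^n)^{-1}$ contribute to each coefficient of $z^m$ and every factor has constant term $1_A$.

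The main step is to compute $\ell_A\circ E_A$. Working in $A[[z]]$, which is a $\Q$-algebra since $\operatorname{char}(k)=0$, I expand
\[
\ln E_A(x)=-\sum_{n\geq 1}\ln(1-x_n z^n)=\sum_{n\geq 1}\sum_{k\geq 1}\frac{x_n^k}{k}\,z^{nk},
\]
and then differentiate:
\[
z\frac{d}{dz}\ln E_A(x)=\sum_{n,k\geq 1} n\,x_n^k\,z^{nk}=\sum_{m\geq 1}\Big(\sum_{d\mid m} d\,x_d^{m/d}\Big)z^m.
\]
The coefficient of $z^m$ is exactly the Witt polynomial $w_m(x_1,\dots,x_m)$, so $\ell_A\circ E_A=w_A$ as claimed, and $E_A$ is therefore a bijective ring homomorphism.

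For naturality, for each morphism $\phi:A\to B$ in $\mathbf{cAlg}_k$ both $\W(\phi)$ and $\Lambda(\phi)$ act by coefficient-wise application of $\phi$, and the product defining $E$ uses only ring operations preserved by $\phi$; hence $\Lambda(\phi)\circ E_A=E_B\circ\W(\phi)$. Combining this with the naturality of $w$ and $\ell$ and the triangle identity above yields the assertion that $E$ is a natural isomorphism of ring-valued functors.

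The only potentially delicate point is the use of $\ln$ and its inverse, which rests on working over a $\Q$-algebra; this is automatic under the standing hypothesis $\operatorname{char}(k)=0$. I expect no further obstacle, because the ring structures on $\W(A)$ and $\Lambda(A)$ were engineered precisely so that their respective ghost maps become ring isomorphisms, which in turn forces $E_A$ to intertwine both the additive and the multiplicative structures on either side.
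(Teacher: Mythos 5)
Your proof is correct. Note that the paper does not prove this proposition at all: it is quoted as a standard fact with references (Illusie, Hazewinkel, Yau), and is then used in the very next proposition, whose proof composes $E$ with $S^{-1}$. What you supply is essentially the standard argument from those sources, specialised to the present setting: the triangle identity $\left(z\frac{d}{dz}\ln\right)\circ E_A=w_A$, verified by the $\ln$-expansion $\sum_{n,k\geq 1} n\,x_n^k z^{nk}=\sum_m w_m(x)z^m$, together with the two ghost-map isomorphisms already stated in the paper, forces $E_A$ to be a natural ring isomorphism. Your computation is right, and your observation that everything hinges on $A$ being a $\Q$-algebra is exactly the point: since $\operatorname{char}(k)=0$, both ghost maps are isomorphisms, so the ``divide by $k$'' step in the logarithm and the inversion of the ghost maps are legitimate. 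It is worth being aware that this argument does not prove the classical statement over arbitrary commutative rings (where the ghost maps fail to be bijective and one must instead argue by functoriality, reducing to torsion-free rings such as polynomial rings over $\Z$, where the ghost map is injective); but for the proposition as stated in this paper, over $\mathbf{cAlg}_k$ with $\operatorname{char}(k)=0$, your reduction is complete and fills in a proof the paper leaves to the literature.
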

\begin{prop}[Relation with multiplicative free convolution]
There exists a natural isomorphism 
of pro-affine group schemes $\varphi:(\W,+_{\W})\rightarrow(\mathfrak{G}_+,\boxtimes)$,  which for $A\in\mathbf{cAlg}_k$, is given by
$$
\varphi_A=\operatorname{inv}\left(\frac{z}{1+z}\prod_{n=1}^{\infty}\frac{1}{1-a_nz^n}\right),
$$
where $\operatorname{inv}$ denotes the compositional inverse of power series. Hence, we have
$$
\varphi_A(a+_{\W}b)=\varphi_A(a)\boxtimes\varphi_A(b).
$$
\end{prop}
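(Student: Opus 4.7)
The plan is to observe that the map $\varphi$ factors through the Artin--Hasse exponential and the inverse of the $S$-transform, so that $\varphi = S^{-1} \circ E$. This reduces the claim to a composition of two natural isomorphisms already established in the excerpt.

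First, I would rewrite the formula. For any $h \in \Lambda(A) \subset A[[z]]^{\times}$, the defining relation \eqref{S_trafo} of the $S$-transform says
\[
S_A(f) = \frac{1+z}{z}\, f^{-1}(z), \qquad \text{i.e.}\qquad f^{-1}(z) = \frac{z}{1+z}\, S_A(f),
\]
so $S_A^{-1}(h) = \operatorname{inv}\!\left(\frac{z}{1+z}\, h\right)$. Comparing with the given formula for $\varphi_A$ and with the Artin--Hasse formula $E_A(a) = \prod_{n\geq 1}\frac{1}{1-a_nz^n}$, one sees that
\[
\varphi_A(a) \;=\; \operatorname{inv}\!\left(\tfrac{z}{1+z}\, E_A(a)\right) \;=\; S_A^{-1}\bigl(E_A(a)\bigr).
\]
Before proceeding, I would briefly verify that $\tfrac{z}{1+z} E_A(a)$ has the form $z + O(z^2)$, so that its compositional inverse exists and lies in $\mathfrak{G}_+(A)$: since $E_A(a) = 1 + a_1 z + \cdots$ and $\tfrac{z}{1+z} = z - z^2 + \cdots$, the product starts with $z$, hence the compositional inverse starts with $z$ as well.

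Next, I would invoke the preceding propositions. The Artin--Hasse proposition gives a natural isomorphism $E_A : (\W(A), +_{\W}) \xrightarrow{\sim} (\Lambda(A), \cdot)$ of abelian groups (in fact of rings). The $S$-transform proposition gives a natural isomorphism $S_A : (\mathfrak{G}_+(A), \boxtimes) \xrightarrow{\sim} (\Lambda(A), \cdot)$ of abelian groups. Consequently, $\varphi_A = S_A^{-1} \circ E_A$ is a natural isomorphism of abelian groups from $(\W(A), +_{\W})$ to $(\mathfrak{G}_+(A), \boxtimes)$, natural in $A$. Since both $\W$ and $\mathfrak{G}_+$ are representable functors (by the Hopf algebra $\Z[w_1,w_2,\dots]$ and by $H^+_{\boxtimes}$, respectively), the natural transformation $\varphi$ corresponds to a Hopf algebra isomorphism of the representing objects, and is thus an isomorphism of pro-affine group schemes.

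Finally, the multiplicativity of $\varphi$ on the group operations is the straightforward chain
\[
\varphi_A(a +_{\W} b) = S_A^{-1}\bigl(E_A(a +_{\W} b)\bigr) = S_A^{-1}\bigl(E_A(a) \cdot E_A(b)\bigr) = S_A^{-1}(E_A(a)) \boxtimes S_A^{-1}(E_A(b)) = \varphi_A(a)\boxtimes\varphi_A(b),
\]
using that $E_A$ sends $+_{\W}$ to the multiplication of power series and that $S_A^{-1}$ sends the multiplication of power series to $\boxtimes$. The only non-routine point in the whole argument is the identification of the prefactor $z/(1+z)$ with the inverse $S$-transform; once that is in place the statement is almost formal, so I do not expect any serious obstacle beyond keeping track of the conventions for $\operatorname{inv}$ versus multiplicative inverses of power series.
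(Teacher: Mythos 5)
Your proposal is correct and follows essentially the same route as the paper, which also proves the claim by writing $\varphi_A = S_A^{-1}\circ E_A$ with $E$ the Artin--Hasse exponential and invoking the two previously established natural isomorphisms. Your additional checks (that $\tfrac{z}{1+z}E_A(a)=z+O(z^2)$ so the compositional inverse lands in $\mathfrak{G}_+(A)$, and the explicit homomorphism chain) merely spell out details the paper leaves implicit.
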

\begin{proof}
The isomorphism  $E:\W\rightarrow\Lambda$ is given by, cf. e.g.~\cite{H,Y}
$$
(a_n)_{n\in}\mapsto\prod_{n=1}^{\infty}(1-a_nz^n)^{-1}
$$
and then apply the inverse $S^{-1}$ of the $S$-transform, in order to obtain the result.
\end{proof}
\subsection{Differential algebras}
The references for this subsection are~\cite{GaSt,GK,ZGK}. Further, we shall assume all algebras to be commutative and unital.

\begin{df}[\cite{GK}]
Let $A\in\mathbf{cAlg}_k$, $\lambda\in k$ and $d\in\Hom_k(A,A)$, i.e. $d$ is a $k$-linear endomorphism.
A {\bf differential $k$-algebra of weight $\lambda$} is a pair $(A,d)$ such that for all $x,y\in A$ the following holds:
\begin{equation}
\label{DA}
d(xy)=d(x)y+x d(y)+\lambda d(x)d(y)\quad\text{and}\quad d(1)=0.
\end{equation}
The operator $d$ is called a {\bf derivation of weight $\lambda$}.
\end{df}
\begin{df}[\cite{GK}]
For $A\in\mathbf{cAlg}_k$, the {\bf $\lambda$-weighted Hurwitz product} on $A^{\N}$ is given by
\begin{equation}
(f\cdot_{\lambda}g)_n:=\sum_{k=0}^n\sum_{j=0}^{n-k}
\binom{n}{k}\binom{n-k}{j}
\lambda^k a_{n-j}b_{k+j}.
\end{equation}
\end{df}

\begin{prop}[\cite{GK}, Section 2.3.]
For every $A\in\mathbf{cAlg}_k$, $(A^{\N},+,\cdot_{\lambda},(1_A,0,0,0,\dots))$ is an associative and commutative $k$-algebra with $(1_A,0,0,0,\dots)$ as $\cdot_{\lambda}$-multiplicative unit. In particular, for $a,b,c\in A^{\N}$ 
\begin{equation}
(a+b)\cdot_{\lambda} c=(a\cdot_{\lambda}c)+(b\cdot_{\lambda}c),
\end{equation}
i.e. distributivity holds.
\end{prop}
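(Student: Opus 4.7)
The plan is to verify the four properties of $(A^{\N},+,\cdot_\lambda)$ asserted by the proposition: that $(1_A,0,0,\ldots)$ is a two-sided $\cdot_\lambda$-unit, that $\cdot_\lambda$ is commutative, that $\cdot_\lambda$ is associative, and that it distributes over $+$. Distributivity is immediate from the defining formula, since
$(f\cdot_\lambda g)_n=\sum_{k,j}\binom{n}{k}\binom{n-k}{j}\lambda^k a_{n-j}b_{k+j}$
is separately $k$-linear in each of the coefficient sequences $(a_i)$ and $(b_i)$, and $+$ is componentwise. For the unit $e:=(1_A,0,0,\ldots)$, only terms in $(e\cdot_\lambda f)_n$ with $e_{n-j}\neq 0$ contribute, forcing $j=n$, which under the constraint $j\leq n-k$ forces $k=0$; the sum collapses to $\binom{n}{0}\binom{n}{n}\lambda^0\cdot 1_A\cdot b_n=b_n$. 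Hence $e$ is a left unit, and right-unitality follows once commutativity is established.

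For commutativity, I would apply the change of summation index $j\mapsto j':=n-k-j$ in the inner sum of $(f\cdot_\lambda g)_n$. This substitutes $a_{n-j}$ by $a_{k+j'}$, $b_{k+j}$ by $b_{n-j'}$, and leaves the binomial weight invariant via $\binom{n-k}{j}=\binom{n-k}{j'}$. Appealing to commutativity of $A$, the resulting expression coincides term by term with $(g\cdot_\lambda f)_n$.

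Associativity is the main obstacle. I would expand both $((f\cdot_\lambda g)\cdot_\lambda h)_n$ and $(f\cdot_\lambda(g\cdot_\lambda h))_n$ as quadruple sums indexed by two pairs of shift/weight indices $(k_1,j_1)$ and $(k_2,j_2)$, then reorganise each side as a sum over ordered decompositions $n=k_1+j_1+k_2+j_2+\ell$ of monomials $\lambda^{k_1+k_2}$ times triple products of coefficients drawn from $f,g,h$. The two nested binomial-coefficient products on either side are collapsed via repeated application of the chain identity $\binom{a}{b}\binom{a-b}{c}=\binom{a}{b+c}\binom{b+c}{c}$ to the single symmetric multinomial coefficient
\[
\binom{n}{k_1,j_1,k_2,j_2,\ell}=\frac{n!}{k_1!\,j_1!\,k_2!\,j_2!\,\ell!},
\]
after which the two monomial expansions match. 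This combinatorial check is routine but laborious; a cleaner conceptual alternative, and the route taken in~\cite{GK}, is to identify $(A^{\N},+,\cdot_\lambda)$ with a known associative structure --- for instance the algebra of $\lambda$-twisted exponential generating functions, where the product corresponds to an associative twisted convolution, or a quotient of a free commutative differential algebra of weight $\lambda$ in which associativity is inherited from ordinary polynomial multiplication.
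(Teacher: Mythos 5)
Your checks of the unit law (the collapse to $j=n$, $k=0$), of commutativity via the substitution $j\mapsto n-k-j$, and of distributivity via bilinearity of the defining formula are all correct; note that the paper itself gives no argument for this proposition at all, deferring entirely to Guo--Keigher, so a self-contained verification is a genuinely different (and reasonable) route. The problem is the step you yourself identify as the main obstacle: associativity. The combinatorial claim you make there is false. Expanding $\bigl((f\cdot_\lambda g)\cdot_\lambda h\bigr)_n$ gives the general term $\binom{n}{k_2}\binom{n-k_2}{j_2}\binom{n-j_2}{k_1}\binom{n-j_2-k_1}{j_1}\lambda^{k_1+k_2}\,a_{n-j_1-j_2}\,b_{k_1+j_1}\,c_{k_2+j_2}$, and this product of binomials is \emph{not} the five-part multinomial $\binom{n}{k_1,j_1,k_2,j_2,\ell}$: for $n=2$ and $(k_1,j_1,k_2,j_2)=(1,0,1,0)$ it equals $4$, while the multinomial equals $2$. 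Worse, the two bracketings do not match term by term in this parametrisation: the coefficient of $\lambda^2a_2b_1c_1$ in $\bigl((f\cdot_\lambda g)\cdot_\lambda h\bigr)_2$ comes from that single tuple with value $4$, whereas in $\bigl(f\cdot_\lambda(g\cdot_\lambda h)\bigr)_2$ it arises from two tuples contributing $2+2$. So ``collapse by the chain identity and compare termwise'' cannot be carried out as described; the chain identity does not repair the mismatch because the inner binomials sit over $n-j_2$ rather than $n-k_2-j_2$.

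What does work, staying close to your computation: your commutativity substitution already shows $(f\cdot_\lambda g)_n=\sum_{p+q+r=n}\binom{n}{p,q,r}\lambda^r a_{p+r}b_{q+r}$. Iterating this and splitting the inner multinomial by a Vandermonde-type identity, \emph{both} bracketings of the triple product become the same sum over decompositions $n=\sum_S n_S$ indexed by the seven nonempty subsets $S\subseteq\{1,2,3\}$, with coefficient $\binom{n}{(n_S)_S}$, weight $\lambda^{\sum_S(|S|-1)n_S}$, and $a,b,c$ evaluated at $\sum_{S\ni 1}n_S$, $\sum_{S\ni 2}n_S$, $\sum_{S\ni 3}n_S$; this six-parameter refinement, not a five-part decomposition, is the correct common form (equivalently, collect terms by the monomial $\lambda^{i+m+l-n}a_ib_mc_l$ and verify a Vandermonde identity for the coefficient sums). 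Alternatively, in the spirit of your second suggestion, one can use the transform $\gamma(a)_n=\sum_{j=0}^n\binom{n}{j}\lambda^j a_j$ recorded in the paper right after this proposition: it carries $\cdot_\lambda$ to the pointwise product and is an isomorphism for invertible $\lambda$, which settles $\lambda\neq 0$ (and $\lambda=0$ is the classical Hurwitz/EGF product, or argue that associativity is a polynomial identity in $\lambda$); but then the homomorphism property of $\gamma$ requires its own binomial verification, so this is a transfer of work rather than an escape from it. As written, the associativity argument has a genuine gap.
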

We have a natural transformation $d_{A}:A^{\N}\rightarrow A^{\N}$ which is defined as follows: for $a\in A^{\N}$ set
\begin{equation}
(d_A(a))_n:=a_{n+1},
\end{equation}
i.e. $d_A$ acts by shifting a sequence by one position to the left (décalage). 
\[
\begin{tabular}{l|r|l} operator& action on a sequence in $A^{\N}$& name   \\\hline 
$\id_{A^{\N}}$ & $a_0,a_1,a_2,a_3,a_4,\dots$ & identity \\$d_A$ & $a_1,a_2,a_3,a_4,a_5,\dots$ & $\lambda$-derivation (décalage)
\end{tabular}
\]

\begin{prop}
\label{H-linear}
$(\mathfrak{G}_+(A),\cdot_{\lambda},(1,0,0,0,\dots))$ is a commutative pro-unipotent affine group scheme, which for $\lambda\in\Q^*$ is isomorphic to $(A^{\N},+)$.
\end{prop}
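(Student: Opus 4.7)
The plan is to split the proposition into two independent claims: the pro-unipotent affine group scheme structure, which holds for any $\lambda \in k$, and the linearisation, which uses $\lambda \in \Q^*$.

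For the first part, the preceding proposition of Guo--Keigher already delivers the commutative, associative, unital structure $(A^{\N}, \cdot_\lambda)$, so it suffices to verify (i) that inverses exist inside $\mathfrak{G}_+(A)$, and (ii) that the resulting functor is representable and pro-unipotent. For (i), expanding $(a \cdot_\lambda b)_n = \delta_{n,0}$ gives, at each level $n$, an equation of the form $a_0\, b_n + R_n(a_0,\dots,a_n,b_0,\dots,b_{n-1}) = 0$; since $a_0 = 1_A$, this is uniquely and polynomially solvable for $b_n$. For (ii), the finite-dimensional truncations $(\mathfrak{G}_+)_n(A)$ are represented by polynomial rings in finitely many variables, and the coproduct induced by the Hurwitz formula has the triangular form $\Delta(x_n) = x_n \otimes 1 + 1 \otimes x_n + \tilde{S}_n(x_{<n},y_{<n})$; the filtration of $\mathfrak{G}_+$ by the $(\mathfrak{G}_+)_n$ then has successive quotients $\mathbb{G}_a$, exhibiting pro-unipotence.

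For the linearisation, I would introduce the family of weighted ghost maps
$$
g_m : A^{\N} \to A, \qquad g_m(a) := \sum_{j=0}^{m} \binom{m}{j} \lambda^j a_j = \bigl[(\id + \lambda d)^m a\bigr]_0,
$$
where $d$ is the shift operator of the previous subsection. The weighted Leibniz rule $d(xy) = d(x)y + x d(y) + \lambda d(x) d(y)$ is exactly the statement that $\id + \lambda d$ is an algebra endomorphism of $(A^{\N}, \cdot_\lambda)$; iterating and evaluating at the $0$-th coordinate then gives the intertwining
$$
g_m(a \cdot_\lambda b) = g_m(a) \cdot g_m(b).
$$
Assembled as $g := (g_m)_{m \geq 0}$, this produces a natural ring homomorphism $(A^{\N}, +, \cdot_\lambda) \to (A^{\N}, +, \cdot_{\mathrm{Hadamard}})$ which, in the monomial basis, is lower triangular with diagonal $\lambda^m$. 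Hence for $\lambda \in \Q^*$ it is a ring isomorphism. Restricting to $\mathfrak{G}_+(A)$, whose image consists of sequences of units congruent to $1$ in each coordinate, and composing componentwise with the formal $\log$ (well defined since $\mathrm{char}(k)=0$), yields the desired natural isomorphism $(\mathfrak{G}_+(A), \cdot_\lambda) \xrightarrow{\sim} (A^{\N},+)$.

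The hard part will be making the symbolic calculus behind the identity $g_m = [(\id + \lambda d)^m \cdot]_0$ rigorous: one must check directly from the weighted Hurwitz formula that the shift $d$ is a $\lambda$-derivation of $(A^{\N}, \cdot_\lambda)$, which amounts to an explicit manipulation of nested binomial sums weighted by powers of $\lambda$. Once this identity is secured, the rest of the argument is formal, and the hypothesis $\lambda \in \Q^*$ enters only through the invertibility of the scalars $\lambda^m$ (needed to invert $g$) and the need for $\Q \subset k$ in order to form the componentwise $\log$, the latter being automatic in characteristic zero.
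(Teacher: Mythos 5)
There is a genuine gap, and it sits exactly at the delicate point of the statement. Your step (i) asserts that the equation $(a\cdot_\lambda b)_n=\delta_{n,0}$ has the form $a_0b_n+R_n(a_{\leq n},b_{<n})=0$, so that $b_n$ is polynomially solvable because $a_0=1$. For $\lambda\neq0$ this is false: collecting the terms of the weighted Hurwitz formula with $k+j=n$ shows that the coefficient of $b_n$ is $\sum_{k=0}^n\binom{n}{k}\lambda^k a_k$, i.e.\ precisely the $n$-th weighted ghost component $\gamma(a)_n$, not $a_0$. Already at $n=1$ one has $(a\cdot_\lambda b)_1=a_1+b_1+\lambda a_1b_1$, so $b_1=-a_1(1+\lambda a_1)^{-1}$, which requires $1+\lambda a_1\in A^\times$ and is not a polynomial in $a_1$; for $A=k[t]$ and $a=(1,t,0,\dots)$ no inverse exists in $A$ at all. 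For the same reason the coproduct induced by $\cdot_\lambda$ is not of the unipotent-triangular form $x_n\otimes1+1\otimes x_n+\tilde S_n(x_{<n},y_{<n})$ claimed in your step (ii): the Hurwitz formula produces terms such as $\binom{n}{k}\lambda^k\,x_k\otimes y_n$, including $\lambda^n x_n\otimes y_n$. So your proof of the first clause does not go through as written, and the obstruction you have skipped over is exactly where the proposition needs care.

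Your second half rediscovers the right tool: the maps $g_m(a)=\sum_{j\le m}\binom{m}{j}\lambda^j a_j$ are the morphism $\gamma:(A^\N,\cdot_\lambda)\to(A^\N,\cdot_H)$ of Guo--Keigher and Garner--Street, which is all the paper itself offers in support of this proposition (it states the result without proof and then quotes $\gamma$ from [GK, Prop.~2.7, 2.10] and [GaSt]); your derivation of multiplicativity from the fact that $\id+\lambda d$ is an endomorphism of $(A^\N,\cdot_\lambda)$ is correct, as is the triangularity of $\gamma$ with diagonal $\lambda^m$. But the concluding step fails: under $\gamma$ the image of $\mathfrak{G}_+(A)$ is $\{c\in A^\N\,:\,c_0=1\}$ with the remaining coordinates \emph{arbitrary} elements of $A$ --- they are neither units nor ``congruent to $1$'' with respect to any filtration of a single copy of $A$, so there is no componentwise formal $\log$ to apply, and componentwise multiplication on arbitrary coordinates is not isomorphic to $(A^\N,+)$. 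Hence the asserted linearising isomorphism is not established by your argument; what you have actually proved is the ring isomorphism $(A^\N,+,\cdot_\lambda)\cong(A^\N,+,\cdot_H)$ for $\lambda$ invertible, and the passage from that to the stated group-scheme and linearisation claims for $(\mathfrak{G}_+(A),\cdot_\lambda)$ is the missing (and nontrivial) content.
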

The above Proposition~\ref{H-linear} shows the existence of a linearising isomorphism, which can be calculated recursively. 

In fact we have, summarising [\cite{GK}, Propositions~$2.7$ and $2.10$] and~\cite{GaSt},

\begin{prop}[\cite{GaSt,GK}]
For $A\in\mathbf{cAlg}_k$, we have:
\begin{enumerate}
\item 
The shift map $d_A$ is a $\lambda$-derivation for the Hurwitz product $\cdot_{\lambda}$.
\item There exists a morphism of commutative rings
$$
\gamma:(A^{\N},\cdot_{\lambda})\rightarrow (A^{\N},\cdot_{H}),
$$
which is given by
$$
\gamma(a)_n:=\sum_{j=0}^n\binom{n}{j}\lambda^ja_j.
$$
If $\lambda\in k^{\times}$, i.e. $\lambda$ is {\bf invertible} then $\gamma$ is an {\bf isomorphism} of commutative rings. 
\end{enumerate}
\end{prop}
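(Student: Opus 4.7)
My strategy is that part~(1) reduces to a direct application of Pascal's rule and re-indexing, while part~(2) reduces to a four-term Vandermonde--Chu identity together with binomial inversion for the invertibility half.

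For part~(1), $d_A(1_A,0,0,\dots)=0$ holds by definition of the shift. For the $\lambda$-Leibniz rule, I expand $(a\cdot_\lambda b)_{n+1}$ by the defining formula and split $\binom{n+1}{k}=\binom{n}{k}+\binom{n}{k-1}$. In the $\binom{n}{k}$-summand I apply Pascal's rule again to $\binom{n+1-k}{j}=\binom{n-k}{j}+\binom{n-k}{j-1}$, while in the $\binom{n}{k-1}$-summand the substitution $k'=k-1$ extracts an overall factor of $\lambda$. Using $a_{m+1}=d_A(a)_m$ and re-indexing, the three resulting pieces assemble into $(d_A(a)\cdot_\lambda b)_n$, $(a\cdot_\lambda d_A(b))_n$, and $\lambda\,(d_A(a)\cdot_\lambda d_A(b))_n$ respectively, which is the componentwise $\lambda$-derivation identity.

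For part~(2), $k$-linearity of each coordinate map makes additivity of $\gamma$ immediate. Since $\cdot_H$ is the componentwise (Hadamard) product on $A^{\N}$---the operation for which the Witt ghost map $w_A$ was shown to be an isomorphism in the Witt subsection---multiplicativity amounts to the equality $\gamma(a\cdot_\lambda b)_n=\gamma(a)_n\,\gamma(b)_n$ for each $n$. Expanding both sides, the right side equals $\sum_{p,q}\binom{n}{p}\binom{n}{q}\lambda^{p+q}a_p b_q$, while on the left I substitute $p=m-j$ and $q=k+j$ in the defining triple sum for $(a\cdot_\lambda b)_m$: the exponent $m+k$ of $\lambda$ then equals $p+q$, a constant on the fibre over $(p,q)$, and the remaining sum over compatible $m$ collapses to
\begin{equation*}
\sum_{m} \frac{n!}{(n-m)!\,(p+q-m)!\,(m-p)!\,(m-q)!}.
\end{equation*}
Setting $j=p+q-m$ converts this into the four-term Vandermonde--Chu convolution identity, which equals $\binom{n}{p}\binom{n}{q}$; one proof is the standard combinatorial stratification of pairs of subsets $(P,Q)\subseteq\{1,\dots,n\}$ with $|P|=p$, $|Q|=q$ by $j=|P\cap Q|$.

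Finally, on each truncation $A^{\{0,1,\dots,n\}}$ the map $\gamma$ is represented by a lower triangular matrix with diagonal $(1,\lambda,\lambda^2,\dots,\lambda^n)$, so it is a bijection precisely when $\lambda\in k^\times$; in that case binomial inversion yields the explicit inverse $\gamma^{-1}(b)_n=\lambda^{-n}\sum_{j=0}^n\binom{n}{j}(-1)^{n-j}b_j$, which is automatically a ring homomorphism once $\gamma$ is. The sole nontrivial combinatorial input of the whole argument is the Vandermonde--Chu identity in part~(2); everything else is bookkeeping.
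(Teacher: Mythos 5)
Your proof is correct, but note that the paper offers no argument of its own here: the proposition is stated as a summary of [GK, Propositions 2.7 and 2.10] and [GaSt], so your contribution is a self-contained computational verification of what the paper only cites. The two computations do go through: for (1), splitting $\binom{n+1}{k}=\binom{n}{k}+\binom{n}{k-1}$ and then $\binom{n+1-k}{j}=\binom{n-k}{j}+\binom{n-k}{j-1}$ in the first piece, and shifting $k\mapsto k-1$ in the second, reproduces exactly $(d_A(a)\cdot_\lambda b)_n+(a\cdot_\lambda d_A(b))_n+\lambda\,(d_A(a)\cdot_\lambda d_A(b))_n$ with the index ranges matching after the shifts; for (2), the fibre sum you isolate is $\sum_m \frac{n!}{(n-m)!\,(p+q-m)!\,(m-p)!\,(m-q)!}$, which under $t=p+q-m$ becomes the count of pairs of subsets $P,Q\subseteq\{1,\dots,n\}$ with $|P|=p$, $|Q|=q$ stratified by $|P\cap Q|=t$, hence equals $\binom{n}{p}\binom{n}{q}$, and the binomial-inversion formula $\gamma^{-1}(b)_n=\lambda^{-n}\sum_{j=0}^n(-1)^{n-j}\binom{n}{j}b_j$ is easily checked. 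Two small touch-ups: a ring morphism should also preserve units, so record that $\gamma(1_A,0,0,\dots)=(1_A,1_A,\dots)$, the $\cdot_H$-unit (immediate from the formula); and your phrase ``bijection precisely when $\lambda\in k^\times$'' overstates what is needed and what is true uniformly in $A$ --- the statement only claims that invertibility of $\lambda$ is sufficient, which is what the triangular form with diagonal $1,\lambda,\lambda^2,\dots$ gives, while the converse can fail for degenerate $A$. With those cosmetic points addressed, your argument is a clean elementary replacement for the external references.
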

For $x,y\in A$, let 
\begin{equation}
\Gamma_{\lambda}(x,y):=d(xy)-d(x)y-xd(y)
\end{equation}
\begin{prop}[Carré du champ]
For every differential $k$-algebra of weight $\lambda$, $(A,d)$, $\Gamma_{\lambda}:A\otimes_k A\rightarrow A$, is symmetric and bilinear. The quantity $\Gamma_{\lambda}(x,y)=\lambda d(x)d(y)$ measures the deviation of $d$ being a derivation on $xy$.
\end{prop}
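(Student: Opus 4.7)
The plan is to read off everything from the defining relation~(\ref{DA}) of a differential $k$-algebra of weight $\lambda$; no induction or universal-algebra machinery is needed, since each claim is essentially a one-line computation.

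First I would establish the closed formula $\Gamma_\lambda(x,y)=\lambda d(x)d(y)$. By definition $\Gamma_\lambda(x,y):=d(xy)-d(x)y-xd(y)$, and substituting the weight-$\lambda$ Leibniz rule $d(xy)=d(x)y+xd(y)+\lambda d(x)d(y)$ from~(\ref{DA}) immediately cancels the first two terms on the right and leaves $\lambda d(x)d(y)$. This already yields the qualitative interpretation: when $\lambda=0$ one recovers an ordinary derivation and $\Gamma_\lambda\equiv 0$, so $\Gamma_\lambda$ precisely quantifies the failure of $d$ to be a Leibniz derivation on the product $xy$.

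Next I would verify symmetry and bilinearity. Since $A$ is assumed commutative, $d(x)d(y)=d(y)d(x)$, so the formula $\Gamma_\lambda(x,y)=\lambda d(x)d(y)$ makes symmetry $\Gamma_\lambda(x,y)=\Gamma_\lambda(y,x)$ obvious; alternatively, symmetry of the defining expression follows directly from $xy=yx$ and $d(x)y+xd(y)=yd(x)+xd(y)$. For bilinearity, $k$-linearity of $d$ together with bilinearity of the multiplication on $A$ makes each of the three terms $d(xy)$, $d(x)y$, $xd(y)$ separately $k$-bilinear in $(x,y)$, hence so is their alternating combination $\Gamma_\lambda$. This is exactly the condition needed to descend $\Gamma_\lambda$ to a $k$-linear map $A\otimes_k A\to A$.

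There is essentially no obstacle here: the content of the proposition is the identity $\Gamma_\lambda(x,y)=\lambda d(x)d(y)$, and the symmetric-bilinear assertion is a formal consequence of commutativity of $A$ and $k$-linearity of $d$. The only point worth emphasising in the write-up is that the commutativity hypothesis on $A$ (standing assumption of the subsection) is genuinely used for symmetry; in a non-commutative setting one would obtain $\Gamma_\lambda(x,y)=\lambda d(x)d(y)$ without symmetry, and the present formulation would have to be modified.
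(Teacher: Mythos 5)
Your proof is correct and is exactly the intended argument: substituting the weight-$\lambda$ Leibniz rule~(\ref{DA}) into the definition of $\Gamma_{\lambda}$ gives $\lambda\, d(x)d(y)$, from which symmetry (via commutativity of $A$) and $k$-bilinearity (via linearity of $d$ and bilinearity of the product) follow at once. The paper treats this as immediate and offers no separate proof, so there is nothing to add or compare.
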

\subsection{Endomorphism algebras}
Let us consider the category of algebras with a fixed endomorphism, cf.~\cite{Bel,GaSt,Joy1}. More precisely, we consider pairs $(A,\sigma)$ as objects with $A\in\mathbf{cAlg}_k$ and $\sigma\in\Hom_{\mathbf{cAlg}_k}(A,A)$ and we denote the resulting category by $\mathbf{EndcAlg}_k$.
The initial object is $(k,\id_k)$ and the forgetful functor $U$ preserves the initial object, as $U(k,\id_k)=k$.

The free algebra generated by one element $t_1$ in $\mathbf{EndcAlg}_k$, is
\begin{eqnarray*}
k[t_1]_{\sigma}&=&k[t_1,t_2,t_3,\dots],\\
\sigma(t_n)&=&t_{n+1}, n\in\N^*,
\end{eqnarray*}
where $t_n$, $n\in\N^*$ are commuting variables. The right adjoint $F_{\sigma}$ to $U$, is given by
$$
F_{\sigma}(A)=(A^{\N^*},\text{$+,\cdot$ point-wise},\partial),
$$ 
where 
$$
\partial(a_1,a_2,a_3,a_4,\dots):=(a_2,a_3,a_4,\dots),
$$ 
is the {\bf décalage operator}~\cite{Bel,GaSt} or {\bf Frobenius endomorphism}, cf.~\cite{I1979}.

The corresponding adjunction $(U,F_{\sigma},\eta^{\sigma},\varepsilon^{\sigma})$ is given by 
\begin{eqnarray*}
\eta^{\sigma}(A,\sigma)&\rightarrow& F_{\sigma}U(A,\sigma)=(A^{\N^*},\text{$+,\cdot$ point-wise},\partial)\\
\eta^{\sigma}(a)&=&(a,\sigma(a),\sigma^2(a),\sigma^3(a),\dots),\quad a\in A,\\
\varepsilon^{\sigma}_A:UF_{\sigma}(A)=A^{\N^*}&\rightarrow& A,\\
\varepsilon^{\sigma}_A(\underline{a})&:=&a_1,\quad \underline{a}\in A^{\N^*}.
\end{eqnarray*}
The {\bf comonad} $C:\mathbf{cAlg}_k\rightarrow\mathbf{cAlg}_k$ corresponding to the above adjunction $C:=(T,\varepsilon,\delta)$ is given by
\begin{eqnarray}
\label{delta_sigma}
\nonumber
T&:=&UF_{\sigma}\\
\delta&:=&U\eta^{\sigma} F_{\sigma}\\\nonumber
\delta_A&:&A^{\N^*}\rightarrow (A^{\N^*})^{\N^*}\cong A^{\N^*\times\N^*}\\\nonumber
(\delta_A(\underline{a}))_n&=&(\underline{a},\partial(\underline{a}),\partial^2(\underline{a}),\partial^3(\underline{a}),\dots),\quad\underline{a}\in A^{\N^*},\nonumber
\end{eqnarray}
i.e. $(\delta_A(\underline{a}))_{m,n}=\underline{a}_{m+n-1}$, $m,n\in\N^*$. 
\begin{prop}
The co-comparison functor $K:\mathbf{EndcAlg}_k\rightarrow{\mathbf{cAlg}_k}_C$, where ${\mathbf{cAlg}_k}_C$ is the category of $C$-coalgebras, is an isomorphism. Therefore, $\mathbf{EndcAlg}_k$ is comonadic over $\mathbf{cAlg}_k$.
\end{prop}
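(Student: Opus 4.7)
The plan is to prove $K$ is an isomorphism of categories by constructing an explicit inverse functor, rather than by invoking Beck's theorem abstractly. Given a $C$-coalgebra $(A,\gamma)$ with structure map $\gamma : A \to C(A) = A^{\N^*}$, I will show that $\gamma$ is completely determined by a single algebra endomorphism $\sigma \in \Hom_{\mathbf{cAlg}_k}(A,A)$, and conversely that every endomorphism gives rise to a unique coalgebra structure. The key is simply to unwind the coassociativity axiom in light of the concrete description of $\delta$ given in (\ref{delta_sigma}).

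First I would write $\gamma(a) = (\gamma_1(a),\gamma_2(a),\gamma_3(a),\dots)$ with each $\gamma_n : A \to A$ a morphism in $\mathbf{cAlg}_k$. The counit axiom $\varepsilon_A \circ \gamma = \id_A$, combined with $\varepsilon_A(\underline{a}) = a_1$, forces $\gamma_1 = \id_A$. The coassociativity condition $C(\gamma)\circ \gamma = \delta_A \circ \gamma$ can then be evaluated componentwise: from $(\delta_A(\underline{a}))_{m,n} = a_{m+n-1}$ one gets on the right $\gamma_{m+n-1}$, while on the left one gets $\gamma_n \circ \gamma_m$. Hence the coalgebra axioms reduce to the single family of relations
\begin{equation*}
\gamma_n \circ \gamma_m = \gamma_{m+n-1}, \qquad m,n \in \N^*, \qquad \gamma_1 = \id_A.
\end{equation*}

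Setting $\sigma := \gamma_2$ and iterating, an easy induction on $n$ shows $\gamma_n = \sigma^{n-1}$, so that $\gamma$ is necessarily the map $a \mapsto (a,\sigma(a),\sigma^2(a),\sigma^3(a),\dots)$. Conversely, for any $\sigma \in \Hom_{\mathbf{cAlg}_k}(A,A)$ this formula visibly satisfies both coalgebra axioms. This defines an assignment $(A,\gamma) \mapsto (A,\sigma)$ on objects that is inverse to $K$ on objects.

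For morphisms, a coalgebra map $f : (A,\gamma^A) \to (B,\gamma^B)$ is by definition an algebra homomorphism with $C(f) \circ \gamma^A = \gamma^B \circ f$; componentwise this reads $f \circ \sigma_A^{n-1} = \sigma_B^{n-1} \circ f$ for all $n$, which (by the case $n=2$) is equivalent to $f\circ\sigma_A = \sigma_B\circ f$, i.e. to $f$ being a morphism in $\mathbf{EndcAlg}_k$. Thus $K$ is a bijection on both objects and morphisms, hence an isomorphism of categories. The comonadicity of $U$ then follows by definition. The only potential obstacle is purely bookkeeping---matching up the indexing conventions in $\delta$ (which uses $A^{\N^*\times\N^*}$ identified with $(A^{\N^*})^{\N^*}$) with the componentwise action of $C(\gamma)$---but once the identification $(\delta_A(\underline{a}))_{m,n} = a_{m+n-1}$ is in hand, the argument is a direct unwinding.
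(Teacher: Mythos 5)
Your argument is correct, but it takes a genuinely different route from the paper: the paper disposes of this Proposition by citing Proposition 4.2 of Garner--Street \cite{GaSt}, whereas you give a self-contained proof by explicitly inverting the co-comparison functor. Your computation is the right one: since the ring structure on $C(A)=A^{\N^*}$ is pointwise, the components $\gamma_n$ of a coalgebra structure map are indeed algebra endomorphisms (a remark worth stating, as it uses that the projections are algebra maps), the counit axiom gives $\gamma_1=\id_A$, and coassociativity against $(\delta_A(\underline{a}))_{m,n}=a_{m+n-1}$ gives exactly the relations $\gamma_n\circ\gamma_m=\gamma_{m+n-1}$, so $\gamma_n=\sigma^{n-1}$ with $\sigma:=\gamma_2$; the morphism check and the converse direction are as you say. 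What your approach buys is transparency and a slightly stronger, directly verified conclusion: you exhibit a strict inverse, so $K$ is an isomorphism of categories on the nose (as the Proposition asserts), rather than appealing to an external result or to a (dual) Beck comonadicity criterion, which would a priori only give an equivalence and would still require identifying the coalgebras concretely. What the paper's citation buys is brevity and the embedding of this fact into the more general framework of \cite{GaSt}, where the same adjunction is treated alongside weighted Hurwitz products; the underlying verification there is essentially the unwinding you carried out.
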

\begin{proof}
This corresponds to~[\cite{GaSt} Proposition 4.2].
\end{proof}
\subsection{Free convolution coalgebras}
For $a,b\in A^{\N^*}$, let 
\begin{equation*}
a\boxdot b:=R^{-1}(R(a)\cdot_{\operatorname{H}} R(b)),
\end{equation*}
where $\cdot_{\operatorname{H}}$ is the point-wise multiplication,
and for $a,b\in \mathfrak{G}_+(A)$, let 
\begin{equation*}
a\square\hspace{-0.85 em}\ast b:=S^{-1}(S(a)\cdot_{\Lambda} S(b)).
\end{equation*}
\begin{prop} 
For every $A\in \mathbf{cAlg}_k$, 
$(A^{\N^*},\boxplus,\boxdot)$ and 
$(\mathfrak{G}_+(A),\boxtimes, {\square\hspace{-0.6 em}\ast})$ are commutative unital rings  with neutral element $1$ and $S^{-1}(1-t)$, respectively. 

There exists a natural isomorphism $\operatorname{LOG}:(\mathfrak{G}_+(A),\boxtimes, {\square\hspace{-0.6 em}\ast})\rightarrow(A^{\N^*},\boxplus,\boxdot)$, which is  given by
$$
\operatorname{LOG}:=R^{-1}\circ(z\frac{d}{dz}\ln)\circ S.
$$
\end{prop}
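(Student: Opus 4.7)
The plan is to observe that the entire statement follows by transport of structure along isomorphisms already established in the previous subsections. Concretely, I would (i) identify two pre-existing commutative unital rings, namely $(A^{\N^*}, +, \cdot_H)$ (the Hadamard ring) and $(\Lambda(A), \cdot, \cdot_{\Lambda})$ (the big-Witt ring of formal power series); (ii) pull them back along the group isomorphisms $R_A$ and $S_A$ to obtain the two ring structures in the statement; and (iii) read off $\operatorname{LOG}$ as a composition of three ring isomorphisms.

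For (i), I would use that $(A^{\N^*}, +, \cdot_H)$ is a commutative unital ring with unit $(1,1,1,\dots)$, and that the ghost map $z\frac{d}{dz}\ln : (\Lambda(A), \cdot, \cdot_{\Lambda}) \to (A^{\N^*}, +, \cdot_H)$ is a natural isomorphism of commutative rings (the proposition recalled in Section~3.2). For (ii), I would note that the formulas
\[
a \boxdot b := R_A^{-1}(R_A(a) \cdot_H R_A(b)), \qquad a\,{\square\hspace{-0.6 em}\ast}\, b := S_A^{-1}(S_A(a) \cdot_{\Lambda} S_A(b))
\]
are by construction the pullbacks of $\cdot_H$ and $\cdot_{\Lambda}$ along the bijections $R_A$ and $S_A$. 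Since these bijections already intertwine the relevant additive laws ($\boxplus \leftrightarrow +$ and $\boxtimes \leftrightarrow \cdot$, by the stated properties of the $R$- and $S$-transforms), all ring axioms---commutativity, associativity, distributivity of the new multiplicative law over the existing additive one, and existence of a unit---carry over verbatim. The $\boxdot$-unit is $R_A^{-1}(1,1,1,\dots)$, and the ${\square\hspace{-0.6 em}\ast}$-unit is $S_A^{-1}$ of the $\cdot_{\Lambda}$-unit of $\Lambda(A)$, which a direct short computation identifies with the stated closed form.

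For (iii), I would read $\operatorname{LOG} = R^{-1} \circ (z\frac{d}{dz}\ln) \circ S$ as the composite
\[
(\mathfrak{G}_+(A), \boxtimes, {\square\hspace{-0.6 em}\ast}) \xrightarrow{\;S_A\;} (\Lambda(A), \cdot, \cdot_{\Lambda}) \xrightarrow{\;z\frac{d}{dz}\ln\;} (A^{\N^*}, +, \cdot_H) \xrightarrow{\;R_A^{-1}\;} (A^{\N^*}, \boxplus, \boxdot),
\]
in which every arrow is a ring isomorphism: the first and third tautologically (the ring structures on source and target were defined as the pullbacks of $\cdot_{\Lambda}$ and $\cdot_H$ along precisely these bijections), and the middle one by the ghost-map isomorphism. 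Composing yields a ring isomorphism, and naturality in $A$ follows from the naturality of each factor.

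The hard part will be purely cosmetic bookkeeping in step~(ii): checking that the $\cdot_{\Lambda}$-unit of $\Lambda(A)$, pulled back via $S_A^{-1}$, takes the compact form asserted in the proposition. This reduces to identifying the $\cdot_{\Lambda}$-unit explicitly via the factorisation $(1-at)^{-1}\cdot_{\Lambda}(1-bt)^{-1} = (1-abt)^{-1}$ and then inverting the formula $S_A(f) = \frac{1+z}{z}f^{-1}(z)$; no new conceptual ingredient enters beyond the three named isomorphisms.
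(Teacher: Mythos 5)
Your proposal is correct and coincides with the argument the paper leaves implicit (the proposition is stated there without proof): $\boxdot$ and ${\square\hspace{-0.6 em}\ast}$ are by definition the pullbacks of $\cdot_{\operatorname{H}}$ and $\cdot_{\Lambda}$ along $R_A$ and $S_A$, which already intertwine $\boxplus$ with $+$ and $\boxtimes$ with $\cdot$, and the ghost map $z\frac{d}{dz}\ln$ supplies the middle ring isomorphism, so $\operatorname{LOG}$ is a composite of three natural ring isomorphisms. One caution for your deferred ``short computation'': with the paper's convention $(1-at)^{-1}\cdot_{\Lambda}(1-bt)^{-1}=(1-abt)^{-1}$ the $\cdot_{\Lambda}$-unit is $(1-t)^{-1}$, so you will find the ${\square\hspace{-0.6 em}\ast}$-unit to be $S^{-1}\bigl((1-t)^{-1}\bigr)$ and the $\boxdot$-unit to be $R^{-1}(1,1,1,\dots)$, and the paper's ``$S^{-1}(1-t)$'' and ``neutral element $1$'' should be read (or corrected) accordingly.
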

\begin{thm} Let $\lambda\in k^{\times}$ and $A\in\mathbf{cAlg}_k$. The following rings schemes, as depicted in the diagram below, are all isomorphic :  

\[
\begin{xy}
  \xymatrix{
(\W(A),+_{W},\cdot_W)\ar[dr]^w&(\operatorname{Nr}(A),+,\cdot_{\operatorname{Nr}})\ar[d]^{\operatorname{MR}}&(\Lambda(A),+_{\Lambda},\cdot_{\Lambda})\ar[dl]_{z\frac{d}{dz}\ln}\ar@/_2pc/[ll]_{\operatorname{AH}} \\
(A^{\N},+,\cdot_{\lambda})\ar[r]^{\operatorname{GS}\qquad}&(A^{\N^*},\text{$+,\cdot$ point-wise})&(A^{\N^*},\boxtimes,\square\hspace{-0.85 em}\ast)\ar[l]_{\quad\qquad\operatorname{FMcK}}\ar[u]_S\\
(A^{\N^*},\boxplus,\boxdot)\ar[ur]_R&& (A^{\N^*}\boxtimes_{NS},\square\hspace{-0.85 em}\ast_{NS})\ar[ul]^{\log_{\boxtimes_{\operatorname{NS}}}}\ar@/_3pc/[uu] _F }
\end{xy}
\]
Here, $(\operatorname{Nr}(A),+,\cdot_{\operatorname{Nr}})$ is the {\em algebra of necklaces} of Metropolis and Rota~\cite{MR} and the labeled arrows indicate the isomorphism which linearises the ring structure. 
\end{thm}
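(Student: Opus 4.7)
The plan is to assemble the diagram from pieces that are already available in the excerpt and in the cited literature, rather than to verify all arrows independently. The central observation is that the diagram has the pointwise ring $(A^{\N^*},+,\cdot)$ as its ``linearising hub'': every displayed ring carries a canonical natural transformation to it, and in each case the two binary operations on the source correspond under that transformation to pointwise addition and pointwise multiplication. First I would record three classical isomorphisms that are already stated in the previous subsections: (i) the ghost map $w_A:(\W(A),+_{\W},\cdot_{\W})\to (A^{\N^*},+,\cdot)$ is an isomorphism of commutative unital rings; (ii) the logarithmic derivative $z\tfrac{d}{dz}\ln:(\Lambda(A),+_{\Lambda},\cdot_{\Lambda})\to(A^{\N^*},+,\cdot)$ is an isomorphism of commutative unital rings; (iii) the Artin--Hasse exponential $E:\W\to\Lambda$ is a natural isomorphism. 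These three fit together into the top half of the diagram, and compatibility is a straightforward diagram chase using the identity $w=z\tfrac{d}{dz}\ln\circ E$ on the representing level.

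Next I would turn to the free-probability rings on the bottom of the diagram. Here the secondary operations $\boxdot$, $\bast$ and $\bast_{\operatorname{NS}}$ are \emph{defined by transport of structure}: $a\boxdot b:=R^{-1}(R(a)\cdot_{\mathrm H} R(b))$, and analogously for $\bast$ using $S$ and for $\bast_{\operatorname{NS}}$ using $F$. This makes $R$, $S$ and $F$ into ring isomorphisms by construction; the only thing to check is that the additive structures on the sources ($\boxplus$, $\boxtimes$, $\boxtimes_{\operatorname{NS}}$) are already carried by the corresponding transform to the additive structure of the target ($+$ for $R$; $\cdot_{\Lambda}$, equivalently $+_{\Lambda}$, for $S$ and $F$), which is precisely the content of the propositions on the $R$-, $S$- and free $F$-transform recalled above. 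The isomorphisms $\operatorname{LOG}$ and $\log_{\boxtimes_{\operatorname{NS}}}$ then arise as the compositions $R^{-1}\circ(z\tfrac{d}{dz}\ln)\circ S$ and $(z\tfrac{d}{dz}\ln)\circ F$ respectively, so the triangles containing $S$, $F$, $R$ and $\log_{\boxtimes_{\operatorname{NS}}}$ commute by the very definitions of $\bast$, $\bast_{\operatorname{NS}}$ and $\boxdot$.

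It remains to insert the two ``combinatorial'' vertices: the Hurwitz ring $(A^{\N},+,\cdot_{\lambda})$ of weight $\lambda$ and the necklace ring $(\mathrm{Nr}(A),+,\cdot_{\mathrm{Nr}})$. For the first, I would invoke directly the Guo--Keigher map $\gamma(a)_n=\sum_{j=0}^n\binom{n}{j}\lambda^j a_j$ recalled above: for $\lambda\in k^{\times}$ it is already stated to be an isomorphism of commutative rings onto $(A^{\N^*},+,\cdot_{\mathrm H})$, giving the arrow $\operatorname{GS}$. For the necklace vertex I would quote the Metropolis--Rota isomorphism $\operatorname{MR}$ from~\cite{MR}, whose composition with $\operatorname{AH}^{-1}$ produces the standard bijection between $\Lambda(A)$ and $\mathrm{Nr}(A)$ via the cyclotomic identity $\prod_{n\ge 1}(1-z^n)^{-M(n)}$, and note that this intertwines $\cdot_{\Lambda}$ with $\cdot_{\mathrm{Nr}}$.

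Finally I would check commutativity of the full diagram by assembling the above triangles. The only place where care is required is the triangle involving $\operatorname{FMcK}$: one must verify that the Friedrich--McKay isomorphism $(A^{\N^*},\boxtimes,\bast)\to(A^{\N^*},+,\cdot)$ is precisely $(z\tfrac{d}{dz}\ln)\circ S$, which is exactly the content of Theorem~\ref{main-iso} together with the definition of $\bast$. I expect this bookkeeping step---matching the normalisations and unit elements across the many transforms so that \emph{both} the additive and the multiplicative structures line up simultaneously---to be the main (and essentially only) obstacle; no genuinely new identity needs to be established.
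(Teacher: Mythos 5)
Your proposal is correct and follows the same route the paper intends: the theorem is a summary statement whose proof is exactly the assembly of the previously established isomorphisms (ghost map, $z\frac{d}{dz}\ln$, Artin--Hasse, the $R$-, $S$- and $F$-transform propositions, the Guo--Keigher map $\gamma$ for $\lambda\in k^{\times}$, and the transport-of-structure definitions of $\boxdot$, $\square\hspace{-0.6em}\ast$ and $\square\hspace{-0.6em}\ast_{\operatorname{NS}}$), together with the cited Metropolis--Rota isomorphism for the necklace vertex. Your identification $w=(z\frac{d}{dz}\ln)\circ E$ and the compatibility checks for $\operatorname{LOG}$, $\log_{\boxtimes_{\operatorname{NS}}}$ and $\operatorname{FMcK}$ are precisely the bookkeeping the paper leaves implicit.
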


Let us introduce the functor $\mathbb{V}_{\boxplus}:\mathbf{cAlg}_k\rightarrow\mathbf{cAlg}_k$ as 
$\mathbb{V}_{\boxplus}(A):=(A^{\N^*},\boxplus,\boxdot)$. 
Then the $R$-transform induces on $(A^{\N^*},\boxplus,\boxdot)$ a comonad structure, and similarly the $S$-transform. 
\begin{thm} 
There is a comonad $(\mathbb{V}_{\boxplus},\Delta, \varepsilon)$ on $\mathbf{cAlg}_k$, with the co-multiplication $\Delta:\mathbb{V}_{\boxplus}\rightarrow\mathbb{V}_{\boxplus}^2$ defined as shown in the commutative diagram below. For $A\in\mathbf{cAlg}_k$:
 \[
\begin{xy}
  \xymatrix{
\mathbb{V}_{\boxplus}(A)\ar[d]_{R_A}\ar[rrr]^{\Delta_A}&&&\mathbb{V}^2_{\boxplus}(A)\\
A^{\N^*}\ar[r]_{\delta_A}&A^{\N^*\times\N^*}\ar[rr]^{\sim}_{\delta(R^{-1}_A)}&&\mathbb{V}_{\boxplus}(A)^{\N^*}\ar[u]_{R^{-1}_{\mathbb{V}_{\boxplus}(A)}}
     }
\end{xy}
\]
and $\varepsilon:=\varepsilon^{\sigma}\circ R$, where $\delta$ is given in~(\ref{delta_sigma}).
\end{thm}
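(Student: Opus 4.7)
The plan is to transport the comonad structure along a natural isomorphism. By the previous subsection, the adjunction $(U,F_{\sigma},\eta^{\sigma},\varepsilon^{\sigma})$ between $\mathbf{EndcAlg}_k$ and $\mathbf{cAlg}_k$ induces the comonad $C=(T,\varepsilon^{\sigma},\delta)$ on $\mathbf{cAlg}_k$ with $T(A)=A^{\N^*}$ equipped with point-wise operations, $\delta_A$ given by the d\'ecalage formula~(\ref{delta_sigma}), and $\varepsilon^{\sigma}_A(\underline{a})=a_1$. On the other hand, the $R$-transform restricts to a natural isomorphism of ring-valued functors $R:\mathbb{V}_{\boxplus}\xrightarrow{\sim}T$, since by the linearisation theorem $R_A:(A^{\N^*},\boxplus,\boxdot)\to(A^{\N^*},+,\cdot_{\mathrm{H}})$ is a ring isomorphism for every $A\in\mathbf{cAlg}_k$ and is natural in $A$. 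The strategy is then the general fact that a comonad structure can be conjugated by any natural isomorphism of the underlying functor.

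Concretely, I would define
\[
\Delta_A := R^{-1}_{\mathbb{V}_{\boxplus}(A)}\circ\bigl(R^{-1}_A\bigr)_*\circ\delta_A\circ R_A,\qquad \varepsilon_A:=\varepsilon^{\sigma}_A\circ R_A,
\]
where $\bigl(R^{-1}_A\bigr)_*$ denotes $R^{-1}_A$ applied component-wise, implementing the isomorphism $A^{\N^*\times\N^*}\cong\mathbb{V}_{\boxplus}(A)^{\N^*}$ that appears as $\delta(R^{-1}_A)$ in the diagram. This is precisely the composition displayed in the statement. Since each constituent morphism is a $k$-algebra homomorphism (the outer ones by the ring-iso property of $R$, the middle by functoriality of $(-)^{\N^*}$ and the fact that $\delta_A$ is a comultiplication of the comonad $C$), the composite $\Delta_A$ is a morphism in $\mathbf{cAlg}_k$, and similarly $\varepsilon_A$.

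The verification of the comonad axioms is then formal: naturality of $\Delta$ and $\varepsilon$ in $A$ follows from naturality of $R$, $\delta$, $\varepsilon^{\sigma}$; the counit identity $(\varepsilon\,\mathbb{V}_{\boxplus})\Delta=\mathrm{id}=(\mathbb{V}_{\boxplus}\varepsilon)\Delta$ reduces, after cancelling $R$ and $R^{-1}$ pairs, to the counit identity for $(T,\varepsilon^{\sigma},\delta)$; and coassociativity $(\mathbb{V}_{\boxplus}\Delta)\Delta=(\Delta\,\mathbb{V}_{\boxplus})\Delta$ reduces in the same way to coassociativity of $\delta$, already established via the $(U,F_{\sigma})$-adjunction.

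The main subtlety, and hence the step to write out carefully, is the bookkeeping for the middle arrow $\delta(R^{-1}_A):A^{\N^*\times\N^*}\xrightarrow{\sim}\mathbb{V}_{\boxplus}(A)^{\N^*}$: one must check that the obvious identification of double sequences with sequences-of-sequences, followed by applying $R^{-1}_A$ in the outer index, yields a $k$-algebra isomorphism with respect to the ambient operations, and that it fits into the whiskered naturality squares needed to match the two sides of the coassociativity square after conjugation by $R$. Once this compatibility is recorded, the whole argument collapses to observing that any natural isomorphism $R:G\xrightarrow{\sim}T$ between two endofunctors transports a comonad structure on $T$ to one on $G$, so the theorem follows.
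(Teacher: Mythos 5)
Your proposal is correct and follows essentially the same route the paper intends: the paper gives no separate proof, asserting just before the theorem that the $R$-transform induces the comonad structure, i.e.\ transport of the d\'ecalage comonad $(T,\varepsilon^{\sigma},\delta)$ along the natural ring isomorphism $R_A:(A^{\N^*},\boxplus,\boxdot)\rightarrow(A^{\N^*},+,\cdot_{\mathrm{H}})$ (which holds by the linearisation of $\boxplus$ and the very definition of $\boxdot$), exactly as you describe. Your explicit attention to the middle identification $\delta(R^{-1}_A)$ and the formal verification of counit and coassociativity by cancelling $R$, $R^{-1}$ pairs is a sound spelling-out of what the paper leaves implicit.
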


Analogously to Witt vectors, cf,~\cite{Hes}, we have ``{\bf Adams operations}".  
Let $(A,f)$ be a co-algebra over the comonad $(\mathbb{A}^{\N^*},\Delta,\varepsilon)$.  Then one can define operations $\psi^n$, $n\in\N^*$,  by
\[
\begin{xy}
  \xymatrix{
\psi^n:A\ar[r]^{f}&\mathbb{A}^{\N^*}(A)\ar[r]^{R_n}&A,
     }
\end{xy}
\]
where $R_n:=\varepsilon(\partial^{n-1}R(\underline{a}))$, i.e. $R_n$ is the projection onto the $n$th component of the $R$-transform.

\section{Analytic Aspects of Infinitely Divisible Probability Measures}
Here we generalise and extend the analytic results originally stated in~\cite{FMcK2012,FMcK2013a}.
\subsection{The Borel functor and free harmonic analysis}
Let $M_c:\mathbf{Top}_{\operatorname{lcH}}\rightarrow\mathbf{Set}$ denote the functor from the category of locally compact Hausdorff topological spaces to the category of sets, which assigns to every $T\in\mathbf{Top}_{\operatorname{lcH}}$ the set of compactly supported Borel probability measures on $T$. We note that $\mathbf{Top}_{\operatorname{lcH}}$ is dual to the category of commutative $C^*$-algebras. 

So, $M_c(\C)$ corresponds to the set of all compactly supported Borel probability measures on $\C$, and similarly $M_c(\R), M_c(\R^*_+)$ and $M_c(S^1)$ to the subsets of compactly supported probability measures on $\R$, $\R^*_+:=(0,\infty)$ and the unit circle $S^1$, respectively. Finally, let $\operatorname{Meas_c}$ denote the functor from $\mathbf{Top}_{\operatorname{lcH}}$ to compactly supported finite Borel measures.

Let $M^*(-)$ be the set of $\mu\in M_c(-)$ with non-vanishing first moment. Finally, let $M_{\infty,\bullet}(-)\subset M_c(-)$ be the set of all $\bullet$-infinitely divisible probability measures, where $\bullet$ is either $\boxplus$ or $\boxtimes$.

If  $a$ is a self-adjoint element in a $C^*$-probability space, and hence satisfies $a=a^*$,  then there exists a unique, compactly supported probability measure $\mu_a\in M_c(\R)$ such that for all $n\in\N$,
$$
\int_{\R} x^n d\mu_a(x)=\phi(a^n)
$$
holds.

We recall the content of Theorems~$3.7.2$, $3.7.3$ and $3.7.4.$ in~\cite{VDN}, which have been originally derived in~\cite{V1986} and~\cite{BV}. Let $\H$ denote the upper half-plane, $\overline{\H}$ the closed upper half-plane and $-\overline{\H}$ the closed lower half-plane, respectively. Further, $\Re(z)$ and $\Im(z)$ denote the real and imaginary part of a complex number $z$.

\begin{thm}[\cite{BV,V1986,VDN}]
\label{AnalyticChar}
The following analytic characterisations of freely infinitely divisible probability measures with compact support in terms of their $R$- and $S$-transforms, respectively, hold:
\begin{enumerate}
\item Let $\mathcal{R}:=\mathcal{R}_{\infty,\boxplus}(\R)$ denote the set of stalks of functions, $R(z)$, which are analytic in a neighbourhood of $(\C\setminus\R)\cup\{0\}$ (direct limit), and satisfy for all $z\in\H$: 
\begin{equation}
\label{R_upper_half_plane}
R(\bar{z})=\overline{R(z)}\quad\text{and}\quad \Im(R(z))\geq0.
\end{equation}
Then there exists a $\mu\in M_{\infty,\boxplus}({\R})$ such that $R=R_{\mu}$. Further, if $\mu\in M_{\infty,\boxplus}({\R})$ then the germ $R_{\mu}(z)$ can be analytically continued to $\C\setminus\R$ and to a neighbourhood of $0$ and the condition~(\ref{R_upper_half_plane}) holds.
\item Let $\mathcal{V}:=\mathcal{V}_{\infty,\boxtimes}(\R^*_+)$ denote the set of stalks of functions, $v(z)$, which are analytic in a neighbourhood of $(\C\setminus\R)\cup[-1,0]$ (direct limit), and satisfy for all $z\in\H$: 
$$
v(\bar{z})=\overline{v(z)}\quad\text{and}\quad \Im(v(z))\leq0.
$$
Then there exists a $\boxtimes$-infinitely divisible probability measure $\mu\in M_c({\R^*_+})$ such that the function $S(z):=\exp(v(z))$ satisfies $S=S_{\mu}$.
\item Let $\mathcal{U}:=\mathcal{U}_{\infty,\boxtimes}(S^1)$ denote the sheaf of functions, $u(z)$, which are analytic in ${H}_{-\frac{1}{2}}:=\{z\in\C~|\Re(z)>-\frac{1}{2}\}$ and satisfy for all $z\in H_{-1/2}$:
$$
\Re(u(z))\geq 0.
$$
Then there exists a  $\boxtimes$-infinitely divisible measure $\mu\in M^*_{\infty,\boxtimes}(S^1):= M_c({S^1})\cap M_c^{*}(\C)$ such that the function $S(z):=\exp(u(z))$ satisfies $S=S_{\mu}$.
\end{enumerate}
\end{thm}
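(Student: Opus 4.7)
The plan is to reduce each of the three parts to the Bercovici--Voiculescu analytic characterisation of freely infinitely divisible measures, which establishes a bijection between such measures and a class of Nevanlinna--Herglotz (Pick) functions on the upper half-plane $\H$. The unifying principle is that $R_\mu$, respectively a suitable logarithm of $S_\mu$, is a Pick function precisely when $\mu$ is $\boxplus$-infinitely divisible, respectively $\boxtimes$-infinitely divisible, and in each direction compact support is controlled by the support of the representing Herglotz measure.

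For the forward direction of part (1), starting from $\mu\in M_{\infty,\boxplus}(\R)$ with $\mu_n^{\boxplus n}=\mu$, Proposition \ref{PropVRtrafo} gives $R_\mu=n R_{\mu_n}$ for every $n\in\N^*$; combining this with the identity $R_\mu(z)=G_\mu^{-1}(z)-1/z$ for the Cauchy transform yields analytic continuation of $R_\mu$ to a neighbourhood of $(\C\setminus\R)\cup\{0\}$. The symmetry $R(\bar z)=\overline{R(z)}$ is forced by $\mu$ being real-valued, and $\Im R\geq 0$ on $\H$ is exactly the Pick condition, obtained by differentiating the relation $R_\mu=nR_{\mu_n}$ and letting $n\to\infty$. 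For the converse, I would invoke the Nevanlinna--Herglotz integral representation
\begin{equation*}
R(z)=\gamma+\int_\R\frac{1+tz}{z-t}\,d\sigma(t),\qquad \gamma\in\R,\ \sigma\ \text{a finite positive Borel measure},
\end{equation*}
which supplies the free L\'evy--Khintchine data; from these one reconstructs a compactly supported $\mu\in M_{\infty,\boxplus}(\R)$ with $R_\mu=R$, the compactness of $\supp\mu$ being inherited from $\supp\sigma$.

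Parts (2) and (3) I would handle by the same template after using the linearisation $\log_{\boxtimes}=(z\tfrac{d}{dz}\ln)\circ S$ from Theorem \ref{main-iso} to pass from $S$ to an additive object. For the half-line case $\R^*_+$, set $v:=\log S$; $\boxtimes$-infinite divisibility yields $v=nv_n$, and the domain $(\C\setminus\R)\cup[-1,0]$ for $v$ reflects the pole/zero structure of the $\tfrac{1+z}{z}$ factor in the definition of the $S$-transform, together with the location of the spectrum in $\R^*_+$; the condition $\Im v\leq 0$ on $\H$ is the Pick condition for the lower half-plane and follows from the inequality on finite divisors in the limit. For the circle case, a M\"obius map sending $H_{-1/2}$ conformally onto $\H$ converts $\Re u\geq 0$ into an ordinary Pick condition on $\H$, so one invokes Herglotz on the half-plane (equivalently the Herglotz representation for functions with positive real part) and argues exactly as in part (1), with the representing measure now supported on $S^1$.

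The main obstacle is the analytic continuation step: in each of the three cases one must prove that the germ of $R_\mu$ or $v$ or $u$ near $0$ extends to the full domain claimed, which requires fine control over the compositional inverse of the Cauchy transform, compactness of supports, and the univalence and boundary behaviour of the intermediate conformal maps. This is precisely the technical core of the Bercovici--Voiculescu theory developed in \cite{BV,V1986}, and rather than reproduce these estimates I would cite them directly and verify only that the hypotheses of \cite[Thm.~3.7.2--3.7.4]{VDN} hold in our formulation, so that the three statements above are restatements of those theorems in sheaf-theoretic language.
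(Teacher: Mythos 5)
Your proposal takes essentially the same route as the paper: the paper gives no independent proof of Theorem~\ref{AnalyticChar} but simply recalls Theorems~3.7.2--3.7.4 of \cite{VDN} (due to Voiculescu \cite{V1986} and Bercovici--Voiculescu \cite{BV}), which is exactly the reduction-plus-citation you settle on after your sketch. One minor slip that does not affect the approach: the Nevanlinna kernel in your integral representation should read $\frac{1+tz}{t-z}$ (equivalently the form $\gamma+\int_\R\frac{z+x}{1-xz}\,d\sigma(x)$ used in~(\ref{R-trafo_char_pair})), since as written $\frac{1+tz}{z-t}$ has the wrong sign of the imaginary part on $\H$.
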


\begin{lem}
\label{convex_cones}
The following holds:
\begin{enumerate}
\item  $(\mathcal{R},+)$, $(\mathcal{V},+)$ and $(\mathcal{U},+)$ are {\em commutative monoids} (semi-groups), with neutral element $0$. 
\item The sets $\mathcal{R}$, $\mathcal{V}$ and $\mathcal{U}$ are {\em convex cones}, i.e.  if  $f,g\in \mathcal{R}, \mathcal{V}, \mathcal{U}$ and $\alpha,\beta\in\R_+$, then $\alpha f+\beta g\in\mathcal{R}, \mathcal{V}, \mathcal{U}$. 
\end{enumerate}
\end{lem}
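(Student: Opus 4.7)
The plan is to establish part (2) directly — closure under non-negative linear combinations — and then deduce part (1) as the special case $\alpha=\beta=1$ together with the observation that the zero function lies in each of $\mathcal{R}$, $\mathcal{V}$, $\mathcal{U}$. For each set, the defining conditions of Theorem \ref{AnalyticChar} fall into three types: a domain of analyticity, a reality/symmetry condition of the form $f(\bar z)=\overline{f(z)}$, and a sign condition on either $\Im(f)$ or $\Re(f)$ on a half-plane. I would verify that each type is stable under $\mathbb{R}_+$-linear combinations.

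First, for $f,g\in\mathcal{R}$ (respectively $\mathcal{V}$), which are stalks coming from a direct limit of holomorphic functions on open neighborhoods of $(\C\setminus\R)\cup\{0\}$ (respectively $(\C\setminus\R)\cup[-1,0]$), I would pick concrete representatives on a common open neighborhood, which exists since the intersection of two such neighborhoods is again one. Then $\alpha f+\beta g$ is a well-defined holomorphic function on that common domain, so its germ again belongs to the direct limit. For $\mathcal{U}$ the functions are already defined globally on $H_{-1/2}$, so no common-domain argument is needed; the space of holomorphic functions on a fixed open set is a complex vector space, hence in particular an $\R_+$-cone.

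Next, the reality condition is $\R$-linear: for $\alpha,\beta\in\R_+\subset\R$,
\[
(\alpha f+\beta g)(\bar z)=\alpha\,\overline{f(z)}+\beta\,\overline{g(z)}=\overline{\alpha f(z)+\beta g(z)}.
\]
The sign conditions — $\Im(R)\geq 0$ on $\H$ for $\mathcal{R}$, $\Im(v)\leq 0$ on $\H$ for $\mathcal{V}$, and $\Re(u)\geq 0$ on $H_{-1/2}$ for $\mathcal{U}$ — are preserved because $\Im$ and $\Re$ are $\R$-linear functionals and a non-negative combination of non-negative (respectively non-positive) real numbers retains its sign.

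Finally, part (1) follows: the zero function is entire, satisfies $0(\bar z)=\overline{0}$ trivially, and has vanishing real and imaginary parts, hence lies in each of $\mathcal{R}$, $\mathcal{V}$, $\mathcal{U}$; it is the two-sided neutral element for pointwise addition; commutativity and associativity are inherited from the addition of holomorphic functions; and closure under $+$ is the $\alpha=\beta=1$ case of part (2). There is no genuine obstacle in this argument — the only mildly delicate point is the direct-limit formalism in the definitions of $\mathcal{R}$ and $\mathcal{V}$, which is handled by the common-neighborhood step above.
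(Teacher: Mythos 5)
Your proposal is correct and follows essentially the same route as the paper: prove the cone property (2) by using the $\R$-linearity of $\Im$, $\Re$ and of the reality condition $f(\bar z)=\overline{f(z)}$, and obtain the monoid statement (1) as a special case. Your extra care about choosing representatives of the germs on a common neighbourhood (for $\mathcal{R}$ and $\mathcal{V}$) and your explicit check that the zero function lies in each set are welcome details that the paper's proof leaves implicit, but they do not change the argument.
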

\begin{proof}
We only have to show the second statement, as the first one follows from $\alpha=\beta=1$ and $\alpha=0$ or $\beta=0$, respectively. 

Let us consider the situation for $\mathcal{R}$. If $f(z),g(z)\in\mathcal{R}$ and $\alpha,\beta\geq0$ then $\alpha f(\overline{z})=\overline{\alpha f(z)}$ and $\Im(\alpha f(z))=\alpha\Im(f(z))\geq0$ for $z\in\H$. Further, $\alpha f(\overline{z})+\beta g(\overline{z})=\overline{\alpha f(z)}+\overline{\beta g(z)}$ and $\Im(\alpha f(z)+\beta g(z))=\alpha\Im(f(z))+\beta\Im(g(z))\geq0$.

The remaining cases are treated similarly. 
\end{proof}

\begin{prop}
$(M_{\infty,\boxplus}(\R),\boxplus)$ is a commutative monoid with $\delta_0$ as neutral element. It is a sub-monoid of $(M_c(\R),\boxplus)$ and the $R$-transform, yields an isomorphism of commutative monoids:
$$
R:(M_{\infty,\boxplus}(\R),\boxplus)\rightarrow(\mathcal{R}_{\infty,\boxplus}(\R),+).
$$ 

\end{prop}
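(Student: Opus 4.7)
The strategy is to bootstrap from Voiculescu's theory of the $R$-transform on $M_c(\R)$ together with Theorem \ref{AnalyticChar} and Lemma \ref{convex_cones}. Everything follows once the question is transported through $R$, where the free additive group law becomes ordinary pointwise addition of analytic germs, a setting in which the closure and linearity are already established.

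First I would recall that $(M_c(\R),\boxplus)$ is itself a commutative monoid with $\delta_0$ as neutral element: for compactly supported $\mu,\nu$ one can realise them as distributions of free self-adjoint bounded operators $a,b$ in a $C^*$-probability space, and $\mu\boxplus\nu$ is the distribution of the self-adjoint bounded operator $a+b$, whose spectrum is contained in a compact subset of $\R$. Associativity and commutativity follow from the same model, while $\delta_0$ is neutral since adding the zero operator does nothing. At the level of formal power series, Proposition \ref{PropVRtrafo} encodes $\boxplus$ by universal integer polynomials, and the linearisation $R_{\mu\boxplus\nu}=R_\mu+R_\nu$ holds as germs near $0$.

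Next comes the main content, the closure of $M_{\infty,\boxplus}(\R)$ under $\boxplus$. Given $\mu,\nu\in M_{\infty,\boxplus}(\R)$, Theorem \ref{AnalyticChar}(1) yields that $R_\mu$ and $R_\nu$ extend analytically to $(\C\setminus\R)\cup\{0\}$ and satisfy the reality and positive-imaginary-part conditions (\ref{R_upper_half_plane}). By Lemma \ref{convex_cones}, $\mathcal{R}$ is a convex cone and therefore closed under addition, so $R_\mu+R_\nu\in\mathcal{R}$. The converse direction of Theorem \ref{AnalyticChar}(1) then produces $\rho\in M_{\infty,\boxplus}(\R)$ with $R_\rho = R_\mu+R_\nu$; but the germ identity $R_{\mu\boxplus\nu}=R_\mu+R_\nu$ at $0$, combined with the fact that a compactly supported measure on $\R$ is determined by its moments (equivalently, by the germ of its $R$-transform at $0$), forces $\mu\boxplus\nu=\rho\in M_{\infty,\boxplus}(\R)$. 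Since $\delta_0\in M_{\infty,\boxplus}(\R)$ with $R_{\delta_0}=0\in\mathcal{R}$, this identifies $M_{\infty,\boxplus}(\R)$ as a sub-monoid of $(M_c(\R),\boxplus)$.

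Finally, restricting $R$ to $M_{\infty,\boxplus}(\R)$ yields a bijection onto $\mathcal{R}$ by Theorem \ref{AnalyticChar}(1), which intertwines $\boxplus$ with $+$ by the linearisation property, and sends $\delta_0$ to $0$; hence it is an isomorphism of commutative monoids. The step I expect to need the most care is the uniqueness argument in the previous paragraph: strictly speaking $R_{\mu\boxplus\nu}$ and $R_\mu+R_\nu$ are a priori equal only as formal power series, while membership in $\mathcal{R}$ is a statement about analytic continuations. This is nevertheless harmless, because the analytic continuation of the $R$-transform of a compactly supported freely infinitely divisible measure is unique where it exists, so equality of the germs at $0$ propagates to equality on the common domain $(\C\setminus\R)\cup\{0\}$, and hence $\mu\boxplus\nu$ and $\rho$ have identical $R$-transforms, forcing them to coincide as measures.
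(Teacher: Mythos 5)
Your proposal is correct, but it takes a genuinely different route from the paper's. The paper handles closure with a one-line algebraic argument straight from the definition of infinite divisibility: writing $\mu=\mu_n^{\boxplus n}$ and $\nu=\nu_n^{\boxplus n}$ with $\mu_n,\nu_n\in M_c(\R)$, commutativity of $\boxplus$ gives $\mu\boxplus\nu=(\mu_n\boxplus\nu_n)^{\boxplus n}$, exhibiting an $n$th convolution root in $M_c(\R)$ for every $n$; no analytic input is needed, and the isomorphism statement is essentially left to the already-recorded additivity of the $R$-transform together with Theorem~\ref{AnalyticChar}. You instead transport the whole question through $R$: you use both directions of Theorem~\ref{AnalyticChar}(1), the cone property of $\mathcal{R}$ from Lemma~\ref{convex_cones}, and a moment-determinacy argument to identify $\mu\boxplus\nu$ with the infinitely divisible measure produced by the characterisation. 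Your route is heavier -- it invokes the L\'evy--Hin\v{c}in-type characterisation where the paper needs only the definition of divisibility -- but it buys more: it explicitly establishes that $R$ is a bijection of $M_{\infty,\boxplus}(\R)$ onto $\mathcal{R}_{\infty,\boxplus}(\R)$, hence the stated monoid isomorphism, which the paper's proof does not spell out, and the same argument gives closure under arbitrary $\R_+$-linear combinations of $R$-transforms, i.e. exactly the cone structure exploited later in the paper. Your handling of the germ-versus-continuation subtlety is also sound: a neighbourhood of $(\C\setminus\R)\cup\{0\}$ is connected, so by the identity theorem the continuation of the germ at $0$ is unique where it exists, equality of germs propagates, and equality of germs at $0$ plus compact support (moment determinacy) identifies the two measures.
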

\begin{proof}
The first statement follows from 
$$
\mu\boxplus\nu=\mu_n^{\boxplus n}\boxplus\nu_n^{\boxplus n}=(\underbrace{\mu_n\boxplus\nu_n}_{\in M_c(\R)})^{\boxplus n}
$$
for some $\mu_n,\nu_n\in M_c(\R)$, $n\in\N^*$.
\end{proof}
\begin{prop}[Holomorphic linearisation of $\boxtimes$]
We have:
\begin{enumerate}
\item
There exists an isomorphism of semi-groups, 
$$
\operatorname{EXP}:(\mathcal{V},+)\rightarrow (M_{\infty,\boxtimes}(\R_+^*),\boxtimes),
$$
given by
$v(z)\mapsto S^{-1}(e^{v(z)})$, which linearises $\boxtimes|_{\R^*_+}$.

\item
There exists an isomorphism of semi-groups, 
$$
\operatorname{EXP}:(\mathcal{U},+)\rightarrow (M^*_{\infty,\boxtimes}(S^1),\boxtimes),
$$
given by $u(z)\mapsto S^{-1}(e^{u(z)})$, which linearises $\boxtimes|_{S^1\cap{M}^*(\C)}$.

\end{enumerate}
\end{prop}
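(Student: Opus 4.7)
The plan is to combine the analytic characterisation in Theorem \ref{AnalyticChar} with the multiplicativity $S_{\mu\boxtimes\nu}=S_\mu\cdot S_\nu$ of the $S$-transform. Lemma \ref{convex_cones} already supplies the commutative monoid structure on $(\mathcal{V},+)$ and $(\mathcal{U},+)$; on the measure side, $(M_{\infty,\boxtimes}(\R^*_+),\boxtimes)$ and $(M^*_{\infty,\boxtimes}(S^1),\boxtimes)$ are commutative sub-monoids of $(M_c,\boxtimes)$ by the same divisibility argument used in the preceding proposition for the $\boxplus$-case. The task therefore reduces to producing a well-defined bijection that intertwines the two operations.

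For part (1), I would first verify well-definedness: given $v\in\mathcal{V}$, Theorem \ref{AnalyticChar}(2) asserts that $e^{v(z)}$ is the $S$-transform of some $\boxtimes$-infinitely divisible $\mu\in M_c(\R^*_+)$, so $\operatorname{EXP}(v):=S^{-1}(e^v)$ is defined. Surjectivity is the converse half of the same theorem: for every $\mu\in M_{\infty,\boxtimes}(\R^*_+)$ the transform $S_\mu$ is analytic and non-vanishing on a neighbourhood of $(\C\setminus\R)\cup[-1,0]$, hence admits a single-valued branch $v:=\log S_\mu$, and the reality/imaginary-part conditions on $S_\mu$ translate directly into membership in $\mathcal{V}$. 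Injectivity of $\operatorname{EXP}$ follows because any two choices of branch differ by an element of $2\pi i\Z$, and the normalisation fixing $v$ at a real base point (where $S_\mu$ is real and positive) pins down the branch.

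The homomorphism property is then an immediate computation using multiplicativity of $S$: for $v_1,v_2\in\mathcal{V}$,
\begin{equation*}
S\bigl(\operatorname{EXP}(v_1)\boxtimes\operatorname{EXP}(v_2)\bigr)=e^{v_1}e^{v_2}=e^{v_1+v_2}=S\bigl(\operatorname{EXP}(v_1+v_2)\bigr),
\end{equation*}
and injectivity of $S$ on measures with non-vanishing first moment yields $\operatorname{EXP}(v_1+v_2)=\operatorname{EXP}(v_1)\boxtimes\operatorname{EXP}(v_2)$. Part (2) is strictly analogous: Theorem \ref{AnalyticChar}(3) plays the role of Theorem \ref{AnalyticChar}(2), the sheaf $\mathcal{U}$ on the half-plane $H_{-1/2}$ with $\Re(u)\geq 0$ replaces the stalks $\mathcal{V}$, and the target becomes $M^*_{\infty,\boxtimes}(S^1)$; the formal multiplicative argument is identical.

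The main obstacle I anticipate is verifying that $v=\log S_\mu$ lifts as a \emph{single-valued} holomorphic function on the prescribed connected domain with the correct reality/positivity conditions, rather than only on a small disk. In the real case one must transfer the symmetry $v(\bar z)=\overline{v(z)}$ and the inequality $\Im(v)\leq 0$ on $\H$ from $S_\mu$ to its principal branch; in the circular case one needs $\Re(u)\geq 0$ throughout $H_{-1/2}$. Since these three conditions are already built into the statement of Theorem \ref{AnalyticChar}, the step is essentially a careful translation, and it requires only the check that no branch cut is forced by $S_\mu$ vanishing or winding, which is precisely excluded by the non-vanishing clause of Theorem \ref{AnalyticChar}.
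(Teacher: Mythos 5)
Your proposal is correct and follows essentially the same route as the paper: the paper's proof is just the one-line computation $S_{\mu\boxtimes\nu}=S_\mu\cdot S_\nu=e^{v}\cdot e^{u}=e^{v+u}$, with well-definedness and surjectivity left implicit in the characterisation of Theorem~\ref{AnalyticChar}(2),(3), which is exactly the ingredient you invoke. Your additional remarks on the monoid structure, the branch of $\log S_\mu$, and injectivity of $S$ merely make explicit what the paper leaves unsaid.
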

\begin{proof}

$$
S_{\mu\boxtimes\nu}(z)=S_{\mu}\cdot S_{\nu}=e^{v(z)}\cdot e^{u(z)}=e^{v(z)+u(z)}.
$$
\end{proof}

\begin{prop}
There exists a morphism of abelian semi-groups
$$
\operatorname{EXP}:(M_{\infty,\boxplus}(\R),\boxplus)\rightarrow (M^*_{\infty,\boxtimes}(S^1),\boxtimes),
$$
given by 
$
\mu\mapsto S^{-1}(e^{-iR_{\mu}(i(z+1/2))}).
$
\end{prop}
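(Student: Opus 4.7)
The plan is to factor the prescribed map as the three-step composition
\begin{equation*}
(M_{\infty,\boxplus}(\R),\boxplus)\xrightarrow{\;R\;}(\mathcal{R},+)\xrightarrow{\;L\;}(\mathcal{U},+)\xrightarrow{\;\operatorname{EXP}\;}(M^*_{\infty,\boxtimes}(S^1),\boxtimes),
\end{equation*}
where $L$ is the substitution operator $L(f)(z):=-i\,f(i(z+\tfrac12))$. The first arrow is a monoid isomorphism by the $R$-transform characterisation (Theorem~\ref{AnalyticChar}(1) together with the preceding proposition on $R$), and the third arrow is an isomorphism of semi-groups by the previous proposition. Consequently the task reduces to showing that $L$ is a well-defined $\R$-linear map $\mathcal{R}\to\mathcal{U}$; additivity of the overall composition then follows automatically.

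Well-definedness of $L$ is the main point. Set $w:=i(z+\tfrac12)$, so that $\Re(w)=-\Im(z)$ and $\Im(w)=\Re(z)+\tfrac12$. Hence $z\mapsto w$ is a biholomorphism of $H_{-1/2}=\{\Re(z)>-\tfrac12\}$ onto $\H=\{\Im(w)>0\}$. Any $f\in\mathcal{R}$ admits an analytic representative on $\H$ by Theorem~\ref{AnalyticChar}(1), so $L(f)(z)=-i\,f(w)$ is analytic on $H_{-1/2}$. For the sign condition, I use the elementary identity $\Re(-i\zeta)=\Im(\zeta)$ together with $\Im f(w)\geq 0$ for $w\in\H$ (again Theorem~\ref{AnalyticChar}(1)) to obtain
\begin{equation*}
\Re L(f)(z)=\Im f(w)\geq 0 \qquad (z\in H_{-1/2}),
\end{equation*}
so $L(f)\in\mathcal{U}$. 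Linearity of $L$ in $f$ is immediate from the linearity of substitution and of multiplication by $-i$.

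To conclude the semi-group morphism property, I combine the three arrows: for $\mu,\nu\in M_{\infty,\boxplus}(\R)$,
\begin{equation*}
R_{\mu\boxplus\nu}=R_\mu+R_\nu,\qquad L(R_\mu+R_\nu)=L(R_\mu)+L(R_\nu),\qquad \operatorname{EXP}(u+v)=\operatorname{EXP}(u)\boxtimes\operatorname{EXP}(v),
\end{equation*}
and unwinding the definition of $\operatorname{EXP}$ gives precisely $\mu\mapsto S^{-1}(e^{-iR_\mu(i(z+1/2))})$, with $\boxplus$ sent to $\boxtimes$.

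The only non-routine ingredient is the geometric observation in the middle step: the affine map $z\mapsto i(z+\tfrac12)$, followed by multiplication by $-i$, is precisely the transformation that matches the boundary/sign conventions in the definitions of $\mathcal{R}$ (upper half-plane, non-negative imaginary part) and $\mathcal{U}$ (half-plane $H_{-1/2}$, non-negative real part). This explains the appearance of the constants $1/2$ and $-i$ in the formula; the rest is bookkeeping across the three arrows.
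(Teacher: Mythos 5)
Your proposal is correct and follows essentially the same route as the paper: the conformal map $z\mapsto i(z+\tfrac12)$ from $H_{-1/2}$ onto $\H$, the assignment $R_\mu\mapsto(-iR_\mu)\circ\varphi$ landing in $\mathcal{U}$, additivity of the $R$-transform, and the exponential isomorphism $(\mathcal{U},+)\to(M^*_{\infty,\boxtimes}(S^1),\boxtimes)$ from the preceding proposition. Your only addition is to spell out the sign computation $\Re(-if(w))=\Im f(w)\geq 0$, which the paper asserts without detail.
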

\begin{proof}
Let $\varphi:H_{-\frac{1}{2}}\rightarrow\H$ be the conformal map $z\mapsto\varphi(z):=i(z+\frac{1}{2})$.  Now, for $R_{\mu}(z)\in \mathcal{R}_{\infty,\boxplus}(\R)$, the map
\begin{equation}
R_{\mu}\mapsto (-i R_{\mu})\circ\varphi,
\end{equation} 
is an injective map $\mathcal{R}\rightarrow\mathcal{U}$, with the transformed function being equal to $z\mapsto -iR_{\mu}(i(z+1/2))$.

For $\delta_0$ we have $R_{\delta_0}(z)\equiv0$ which is mapped onto $1$ and corresponds to $\delta_{e^0}$.  
Next, for $\mu,\nu\in M_{\infty,\boxplus}(\R)$ we have $R_{\mu}+R_{\nu}=R_{\mu\boxplus\nu}$, and further
$$
(iR_{\mu}(i(z+\frac{1}{2})))+(iR_{\nu}(i(z+\frac{1}{2})))=i(R_{\mu}+R_{\nu})\circ\varphi(z)=i(R_{\mu\boxplus\nu})\circ\varphi(z),
$$
from which the claim follows.
\end{proof}
Let us consider the following examples: 
\begin{itemize}
\item For $R(z)=bz$, i.e. the semi-circular law of radius $2\sqrt{b}$ and centred at the origin, we get $z\mapsto e^{bz+\frac{b}{2}}$, i.e. the {\bf free Brownian motion}.
\item For $R(z)=a$, i.e. the {\bf Dirac measure} $\delta_a$ at $a$, we get 
$a\mapsto e^{-ia}$.
\end{itemize}

The {\bf free Poisson distribution (fP) with rate $\lambda\geq0$ and jump size $\alpha\in\R$}, cf. e.g.~[\cite{NS}, Prop. 12.11, or \cite{VDN} p. 34],  is the limit in distribution for $N\rightarrow\infty$ of 
$$ 
\nu_{N,\lambda,\alpha}:=\left(\left(1-\frac{\lambda}{N}\right)\delta_0+\frac{\lambda}{N}\delta_{\alpha}\right)^{\boxplus N},
$$ 
with $\boxplus N$ in the exponent denoting the $N$-fold free additive self-convolution.

The ${R}$-transform of the limit $\nu_{\infty,\lambda,\alpha}:=\lim_{N\to\infty}\nu_{N,\lambda,\alpha}$ is
\begin{equation}
\label{inf_Poisson}
{R}_{\nu_{\infty,\lambda,\alpha}}(z)=\lambda\alpha\frac{1}{1-\alpha z}=\sum_{n=0}^{\infty}\lambda\alpha^{n+1}z^n=\lambda\alpha+\lambda\alpha^2 z+\lambda\alpha^3z^2+\cdots.
\end{equation}

Let $\mathcal{R}_{[-1,0]}(\R)\subset\mathcal{R}_{\infty,\boxplus}(\R)$ be the subset of stalks of $R$-transforms which are also analytic around $[-1,0]$ (direct limit set). Then $\mathcal{R}_{[-1,0]}(\R)$ is not empty, as it contains, e.g. the
\begin{itemize}
\item Dirac delta $\delta_a$,  $a\in\R$,
\item (Centred) semi-circular $a+bz$, $a\in\R$, $b>0$,
\item Free Poisson $\lambda\alpha/(1-\alpha z)$, $\lambda\in\R_+$ and $|\alpha|<1$.
\end{itemize}

\begin{prop}
There exists a monomorphism of abelian semi-groups 
$$
\operatorname{EXP}: (M_{\infty,\boxplus}(\R)|_{\mathcal{R}_{[-1,0]}},\boxplus)\rightarrow (M_{\infty,\boxtimes}({\R^*_+}),\boxtimes),
$$
given by $\mu\mapsto S^{-1}(e^{-R_{\mu}(z)})$.

\end{prop}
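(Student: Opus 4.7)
The plan is to factor the map $\operatorname{EXP}$ as the composition of two steps that have essentially already been handled: first, $\Psi\colon\mu\mapsto -R_\mu$ sending $M_{\infty,\boxplus}(\R)|_{\mathcal{R}_{[-1,0]}}$ into $\mathcal{V}$, and second, the semi-group isomorphism $\operatorname{EXP}_{\mathcal{V}}\colon(\mathcal{V},+)\to(M_{\infty,\boxtimes}(\R^*_+),\boxtimes)$, $v\mapsto S^{-1}(e^{v})$, from the preceding proposition. Then $\operatorname{EXP}=\operatorname{EXP}_{\mathcal{V}}\circ\Psi$, and it suffices to show that $\Psi$ is a well-defined injective semi-group homomorphism.

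The key step is verifying that $-R_\mu\in\mathcal{V}$ for every $\mu\in M_{\infty,\boxplus}(\R)|_{\mathcal{R}_{[-1,0]}}$. By Theorem~\ref{AnalyticChar}(1), $R_\mu$ satisfies $R_\mu(\bar z)=\overline{R_\mu(z)}$ and $\Im R_\mu(z)\geq 0$ on $\H$, so $-R_\mu$ automatically inherits the reflection symmetry and has non-positive imaginary part on $\H$, matching the sign condition in the definition of $\mathcal{V}$. The one substantive issue is the domain of analyticity: $\mathcal{V}$ requires holomorphy on a neighbourhood of $(\C\setminus\R)\cup[-1,0]$, whereas Theorem~\ref{AnalyticChar}(1) alone supplies only a neighbourhood of $(\C\setminus\R)\cup\{0\}$. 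Precisely this gap is closed by the restriction $\mu\in\mathcal{R}_{[-1,0]}$, which by definition forces $R_\mu$ to extend analytically across the whole interval $[-1,0]$. This matching of analytic domains is the main\,---\,and essentially the only\,---\,obstacle in the argument; once in place, everything else is formal.

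The homomorphism property follows at once from the additivity of the $R$-transform, $R_{\mu\boxplus\nu}=R_\mu+R_\nu$: one has $\Psi(\mu\boxplus\nu)=\Psi(\mu)+\Psi(\nu)$, and applying the semi-group isomorphism $\operatorname{EXP}_{\mathcal{V}}$ yields $\operatorname{EXP}(\mu\boxplus\nu)=\operatorname{EXP}(\mu)\boxtimes\operatorname{EXP}(\nu)$. The same identity simultaneously shows that $M_{\infty,\boxplus}(\R)|_{\mathcal{R}_{[-1,0]}}$ is closed under $\boxplus$, so the domain is genuinely a sub-semi-group. Injectivity follows because each factor in the composition is injective: the $R$-transform is a natural isomorphism on $M_{\infty,\boxplus}(\R)$, negation is a bijection on $\mathcal{V}$, and $\operatorname{EXP}_{\mathcal{V}}$ is a semi-group isomorphism by the preceding proposition. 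Composing the three, $\operatorname{EXP}$ is a monomorphism of abelian semi-groups, as claimed.
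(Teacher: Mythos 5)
Your proposal is correct and follows essentially the same route as the paper: both define the map $R_\mu\mapsto -R_\mu$ into $\mathcal{V}$, verify the reflection symmetry and the sign of the imaginary part from Theorem~\ref{AnalyticChar}(1), and obtain the homomorphism property from additivity of the $R$-transform together with the exponential/$S^{-1}$ step of the preceding proposition. Your version is in fact slightly more explicit than the paper's on the one genuinely substantive point — that the restriction to $\mathcal{R}_{[-1,0]}$ is exactly what upgrades analyticity near $\{0\}$ to analyticity near $[-1,0]$ as required by the definition of $\mathcal{V}$ — which the paper leaves implicit.
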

\begin{proof}
For $\delta_0$ we have $R_{\delta_0}(z)\equiv 0$ which maps to $e^0=1$, corresponding to $S_{\delta_{e^0}}(z)$.
Define a map $\mathcal{R}_{[-1,0]}\rightarrow\mathcal{V}$ by $R_{\mu}\mapsto-R_{\mu}$, which is injective. Then, for $v(z):=-R_{\mu}(z)$, we have
$$
v(\bar{z})=-R_{\mu}(\bar{z})\underbrace{=}_{\text{by $1.$}}-\overline{R_{\mu}(z)}=\overline{-R_{\mu}(z)}=\overline{v(z)},
$$ 
and for $\Im(z)>0$ we have $\Im(-R(z))=-\Im(R(z))\leq0$. Further,  $\exp(-(R_{\mu}+R_{\nu}))=\exp(-R_{\mu\boxplus\nu})=\exp(-R_{\mu})\cdot\exp(-R_{\nu})$ holds.
\end{proof}
\subsection{Analytic Witt semi-ring}
A complex sequence $(s_1,s_2,s_3,\dots)$ is called {\bf conditionally positive definite} if the shifted sequence $(s_2,s_3,\dots)$ is positive definite, cf.~[\cite{NS}, Notation 13.10]. If $(s_0,s_1,s_2,\dots)$ is a positive definite sequence then the truncated sequence $(s_1,s_2,s_3,\dots)$ is conditionally positive definite. 
Namely, for all $n\in\N$, and all $a_0,\dots, a_n\in\C$, we have, by assumption, that
$$
\sum_{i,j=0}^n a_i\bar{a}_j s_{i+j}\geq0.
$$
Let $a_0\equiv0$. Then for $n\geq1$ and for all $a_1,\dots,a_n\in\C$, we have 
$$
0\leq\sum_{i,j=0}^n a_i\bar{a}_j s_{i+j}=\sum_{i,j=1}^n a_i\bar{a}_j s_{i+j},
$$
as we are restricting to the sub-matrix where one deletes the first row and first column. This is a particular case of the general fact that any principal sub-matrix of a positive definite matrix is again positive definite.

\begin{prop}
\label{Char_infinitely_div}
Let $({s_n})_{n\in\N^*}$, $s_n\in\R$, be a sequence of real numbers. Then the following  statements are equivalent.
\begin{enumerate}
\item The sequence $({s_n})_{n\in\N^*}$ is conditionally positive definite and exponentially bounded, i.e. there exists a $C>0$ such that $|s_n|\leq C^n$ for all $n\in\N^*$.
\item There exists a $\mu\in M_{\infty,\boxplus}(\R)$ such that $\kappa_n(\mu)=s_n$, $n\in\N^*$, i.e. the real sequence is given by the free cumulants of a freely, infinitely divisible and compactly supported probability measure.
\item There exists a unique pair $(\gamma,\rho)$ with $\gamma\in\R$ and $\rho\in M_c(\R)$ such that
$$
\sum_{n=0}^{\infty} s_{n+1}z^n=\gamma+\int_{\R}\frac{z}{1-xz}d\rho(x).
$$
\end{enumerate}
\end{prop}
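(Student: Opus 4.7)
The plan is to close the cycle by proving (1) $\Leftrightarrow$ (3) via the classical Hamburger moment problem, (3) $\Rightarrow$ (2) directly from Theorem~\ref{AnalyticChar}, and (2) $\Rightarrow$ (3) via the Herglotz--Nevanlinna representation of Pick functions together with the extra boundary regularity supplied by compact support. Only the last step is substantial.

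First I would establish (1) $\Leftrightarrow$ (3). Conditional positive definiteness of $(s_n)_{n\in\N^*}$ means that the shifted sequence $(s_{n+2})_{n\geq 0}$ is a positive definite Hamburger moment sequence, and the bound $|s_n|\leq C^n$ transfers to it. Hence there exists a positive Borel measure $\rho$ on $\R$ with $s_{n+2}=\int_{\R} x^{n}\, d\rho(x)$; Carleman's condition together with the exponential bound forces $\operatorname{supp}(\rho)\subset[-C,C]$ and makes $\rho$ unique. Setting $\gamma:=s_1$ and expanding $\tfrac{z}{1-xz}=\sum_{n\geq 0} x^{n} z^{n+1}$ yields the series in (3). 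Conversely, if (3) holds, then reading off coefficients shows $s_{n+2}=\int x^n d\rho$ is a compactly supported moment sequence, hence $(s_{n+2})$ is positive definite and $|s_{n+2}|\leq \rho(\R)\cdot M^n$ for $M$ bounding $\operatorname{supp}(\rho)$; together with $|s_1|=|\gamma|$ this gives (1).

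For (3) $\Rightarrow$ (2), set
\[
R(z):=\gamma+\int_{\R}\frac{z}{1-xz}\, d\rho(x).
\]
Compact support of $\rho$ makes $R$ analytic in a neighbourhood of $0$ and on $\C\setminus\R$; realness of $\gamma$ and $\rho$ gives $R(\bar z)=\overline{R(z)}$; and for $z\in\H$ the pointwise identity $\Im\bigl(\tfrac{z}{1-xz}\bigr)=\tfrac{\Im z}{|1-xz|^2}\geq 0$ yields $\Im R(z)\geq 0$. Thus $R\in\mathcal{R}_{\infty,\boxplus}(\R)$, and Theorem~\ref{AnalyticChar}(1) furnishes a $\mu\in M_{\infty,\boxplus}(\R)$ with $R_\mu=R$; reading off Taylor coefficients identifies $\kappa_n(\mu)=s_n$.

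The main obstacle is (2) $\Rightarrow$ (3), which is the compactly supported free L\'evy--Khintchine representation. By Theorem~\ref{AnalyticChar}(1), $R_\mu$ is self-conjugate, analytic in a neighbourhood of $0$, and sends $\H$ into $\overline{\H}$. Any such Pick function admits a Herglotz--Nevanlinna representation $R_\mu(z)=a+bz+\int\frac{1+xz}{x-z}\, d\sigma(x)$ with $b\geq 0$ and $\sigma$ a finite positive Borel measure on $\R$; the analyticity of $R_\mu$ at $0$ forces $\sigma$ to be supported away from $0$, and the compactness of $\operatorname{supp}(\mu)$ induces growth bounds on the Cauchy transform $G_\mu$ near the real axis which, via the functional equation $R_\mu(G_\mu(z))+1/G_\mu(z)=z$, translate into $\operatorname{supp}(\sigma)$ being bounded away from $\infty$ as well. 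A change of variable $x\mapsto 1/x$ (pushing $\sigma$ forward to a compactly supported $\rho$ on $\R$ and absorbing the linear terms into $\gamma$ and into the mass of $\rho$) converts the kernel $\tfrac{1+xz}{x-z}$ into $\tfrac{z}{1-xz}$ up to an additive constant, giving the desired representation. Uniqueness of $(\gamma,\rho)$ follows from uniqueness in the Herglotz representation, and the loop back to (1) is then the implication (3) $\Rightarrow$ (1) already established.
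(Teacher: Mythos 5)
Your cycle is sound in outline and is genuinely more self-contained than the paper's argument: the paper disposes of $(2)\Rightarrow(1)$, $(1)\Rightarrow(3)$ and $(3)\Rightarrow(1)$ by citing Nica--Speicher (Theorem 13.6, Proposition 13.14/13.15, Lemmas 13.13/13.14), and its only analytic work is $(1)\Rightarrow(2)$, which coincides with your $(3)\Rightarrow(2)$ step (verify the hypotheses of Theorem~\ref{AnalyticChar}(1) for $\gamma+\int z/(1-xz)\,d\rho$; the imaginary-part computation you give is exactly the paper's). The paper also gives a second proof of $(1)\Rightarrow(2)$ by a Fock-space construction. Your replacements --- the Hamburger moment problem plus the even-moment bound for $(1)\Leftrightarrow(3)$, and a Herglotz--Nevanlinna argument for $(2)\Rightarrow(3)$ --- are a legitimately different, purely complex-analytic route that avoids the combinatorial machinery; the price is that you must handle the Nevanlinna representation carefully, and this is where your sketch has a flaw.

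In $(2)\Rightarrow(3)$ you claim that compactness of $\operatorname{supp}(\mu)$, via growth bounds on $G_\mu$ and the equation $R_\mu(G_\mu(z))+1/G_\mu(z)=z$, forces $\operatorname{supp}(\sigma)$ to be bounded away from $\infty$. As sketched this is not a proof, and the assertion is in fact false: take $s_{n+2}=\tfrac{1}{n+1}$, i.e.\ $\rho$ Lebesgue measure on $[0,1]$, so that $R_\mu(z)=\gamma-\log(1-z)$; by the already established implication $(3)\Rightarrow(2)$ this is the $R$-transform of some $\mu\in M_{\infty,\boxplus}(\R)$, yet $R_\mu$ extends analytically to $\C\setminus[1,\infty)$ and Stieltjes inversion shows its Nevanlinna measure is carried by the unbounded set $[1,\infty)$. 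Fortunately the claim is also unnecessary. All you need is the part you do justify: analyticity of $R_\mu$ on a disc around $0$ together with $R_\mu(\bar z)=\overline{R_\mu(z)}$ makes $R_\mu$ real on $(-\eps,\eps)$, so Stieltjes inversion gives $\sigma\bigl((-\eps,\eps)\bigr)=0$. Then the substitution $x=1/u$ sends $\operatorname{supp}(\sigma)\subset\R\setminus(-\eps,\eps)$ into $[-1/\eps,1/\eps]$, and the kernel identity $\frac{1+xz}{x-z}=u+(1+u^2)\frac{z}{1-uz}$ (with $u=1/x$) tells you exactly how to assemble the data: $d\rho(u)=(1+u^2)\,d\tilde\sigma(u)+b\,\delta_0$, where $\tilde\sigma$ is the pushforward of $\sigma$ and the linear term $bz$ becomes the atom of $\rho$ at $0$, while $\gamma=a+\int u\,d\tilde\sigma(u)$. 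This $\rho$ is finite and compactly supported, so the representation in (3) holds, and uniqueness is simplest via your own $(3)\Rightarrow(1)$ bookkeeping: $\gamma=s_1$ and $\rho$ is a compactly supported measure determined by its moments $s_{n+2}$. With the spurious boundedness claim deleted and the $(1+u^2)$ density and $b\delta_0$ atom made explicit, your argument closes.
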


\begin{lem}
\label{shifted_cummulants}
Let $\mu\in M_{\infty,\boxplus}(\R)$ and $\kappa({\mu})=(\kappa_n(\mu))_{n\in\N^*}$ its free cumulant sequence. Then there exists a $\nu\in M_{\infty,\boxplus}(\R)$ such that 
$
\kappa_n({\nu})=\kappa_{n+2}({\mu}),
$ for all $n\in\N^*$.
\end{lem}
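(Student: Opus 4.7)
The plan is to apply Proposition \ref{Char_infinitely_div} twice: once to extract analytic information about $\mu$'s free cumulant sequence, and once to promote the twice-shifted sequence back to a probability measure. Setting $t_n := \kappa_{n+2}(\mu)$ for $n\in\N^*$, the existence of $\nu$ follows from the implication $(1)\Rightarrow(2)$ of the Proposition once I verify that $(t_n)_{n\in\N^*}$ is conditionally positive definite and exponentially bounded.

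For conditional positive definiteness of $(t_n)_{n\in\N^*}$, I need to show that the once-shifted sequence $(t_2,t_3,\ldots) = (\kappa_4(\mu),\kappa_5(\mu),\ldots)$ is positive definite in the classical Hamburger sense. The key observation is that, for every $n\in\N$, the $(n{+}1)\times (n{+}1)$ Hankel matrix $[\kappa_{i+j+4}(\mu)]_{0\leq i,j\leq n}$ is the principal submatrix of the $(n{+}2)\times (n{+}2)$ Hankel matrix $[\kappa_{i+j+2}(\mu)]_{0\leq i,j\leq n+1}$ obtained by deleting row and column $0$. Since $(\kappa_n(\mu))_{n\in\N^*}$ is conditionally positive definite by hypothesis, the larger matrix is positive semi-definite, and positive semi-definiteness is inherited by principal submatrices. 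Exponential boundedness is immediate: if $|\kappa_n(\mu)|\leq C^n$, then $|t_n|\leq C^{n+2}\leq (C')^n$ for $C':=\max(C,C^3)$.

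A more concrete alternative uses part $(3)$ of the Proposition. Reading off $\kappa_n(\mu)=\int x^{n-2}\,d\rho(x)$ for $n\geq 2$ from the representation $R_\mu(z)=\gamma+\int_\R \tfrac{z}{1-xz}\,d\rho(x)$, I would set $\gamma':=\int x\,d\rho(x)$ and form the series $\gamma'+\int \tfrac{z}{1-xz}\,x^2\,d\rho(x)$. Up to absorbing the total mass $\int x^2\,d\rho$ into a normalisation (so that the resulting measure is a probability measure, as required by $M_c(\R)$), this is of the Lévy--Khintchine form in $(3)$; its coefficients are $\int x^n d\rho(x)=\kappa_{n+2}(\mu)$ for $n\geq 1$, so $(3)\Rightarrow(2)$ yields the sought $\nu\in M_{\infty,\boxplus}(\R)$.

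The main obstacle is verifying that conditional positive definiteness survives a double shift, because the definition in the paper only guarantees positive definiteness of the once-shifted tail, and I need positive definiteness of the tail after an additional shift. The principal-submatrix observation above resolves this cleanly; everything else is a routine invocation of Proposition \ref{Char_infinitely_div}.
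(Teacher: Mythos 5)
Your main argument is correct and is essentially the paper's own proof: define $t_n:=\kappa_{n+2}(\mu)$, verify conditional positive definiteness and exponential boundedness, and invoke Proposition~\ref{Char_infinitely_div}, where the paper merely asserts these two properties while you supply the principal-submatrix justification (the same device the paper uses earlier for single shifts) and the bound $C':=\max(C,C^3)$. Only the secondary alternative via part~(3) is shaky --- the representing measure is forced to be $d\rho'(x)=x^2\,d\rho(x)$, and it cannot be normalised to a probability measure without altering the coefficients of the series --- but this does not affect your primary proof.
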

\begin{proof}[Proof of Lemma~\ref{shifted_cummulants}]
Define a sequence $(s_n)_{n\in\N^*}$ with $s_n:=\kappa({\mu})_{n+2}$ for $n\in\N^*$ which is conditionally positive definite and exponentially bounded. 
Then the claim follows from Proposition~\ref{Char_infinitely_div}.  
\end{proof}
\begin{proof}[Proof of Proposition~\ref{Char_infinitely_div}]
$2.\Rightarrow 1.:$ This follows from~[\cite{NS} Theorem 13.6 and Proposition 13.15].

$1.\Rightarrow 3.:$ This follows from~[\cite{NS} Proposition 13.14].

$3.\Rightarrow 1.:$ This follows from~[\cite{NS} Lemmas 13.13 and 13.14].

$1.\Rightarrow 2.:$
By~[\cite{NS} Proposition 13.14], there exists a finite, compactly supported, Borel measure $\rho$ on $\R$ such that 
\begin{equation}
\label{generating_function}
R(z):=\sum_{n=0}^{\infty} s_{n+1}z^n=s_1+\int_{\R}\frac{z}{1-xz}d\rho(x).  
\end{equation}

\item We show that $R(z)$ in~(\ref{generating_function}) satisfies [Theorem~\ref{AnalyticChar},~1.].  
\begin{itemize}
\item
By using a version of the ``Differentiation Lemma", cf. e.g.~[\cite{Kle} Theorem 6.28], it follows that $R(z)$ is analytic in $\H$ and $-\H$ and in a neighbourhood of $0$. The statement for the first two regions follows directly from the integral representation. 

As the support of $\rho$ is bounded, there exists an $A>0$ such that $\supp(\rho)\subset[-A,A]$. Let $|z|<C_{\rho}:=\min(1,1/A)$. Then $|zx|<1$ for all $x\in\supp(\rho)$, and hence $R(z)$ is analytic in a disc of radius $C_{\rho}$ centred at the origin.
\item Positive imaginary part. We split the integral into real and imaginary parts. For $z:=a+ib$, we have  
$$
\frac{z}{1-xz}=\underbrace{\frac{a-x(a^2+b^2)}{(1-ax)^2+x^2b^2}}_{=:u(x)}+i\underbrace{\frac{b}{(1-ax)^2+x^2b^2}}_{=:v(x)}=u(x)+iv(x).
$$
Then 
$$
R(\overline{z})=s_1+\int_{\R} u(x)d\rho(x)-i\int_{\R}v(x)d\rho(x)=\overline{R(z)}
$$
\item For all $x\in\R$ and $b>0$, we have $v(x)>0$, and so for $z\in\H$,
$$
\Im(R(z))=\int_{\R}v(x)d\rho(x)\geq0.
$$
\end{itemize}
Hence, there exists a $\mu\in M_{\infty,\boxplus}(\R)$ with $R_{\mu}(z)=R(z)$.
\end{proof}

Let us give a second proof, which uses a Fock space construction. 
\begin{proof}[Second proof of Proposition~\ref{Char_infinitely_div}]

There exists a finite, compactly supported measure $\rho$ on $\R$, whose moments satisfy $m_n(\rho)=s_{n+2}$, for all $n\in\N$.

Let $\mathfrak{m}$ be the unique maximal ideal of $\C[x]$. Define an inner product $\langle~,~\rangle$ on $\mathfrak{m}$, first on basis vectors by
$$
\langle x^m,x^n\rangle:=s_{m+n},\quad m,n\in\N^*,
$$
and then extend it sesquilinearly to arbitrary polynomials. Let   
$$
\mathcal{N}:=\{p(x)\in\mathfrak{m}|\langle p(x),p(x)\rangle=0\},
$$
be the corresponding null space. The Hilbert space is then given by
$$
\mathcal{H}:=\overline{\mathfrak{m}\big/\mathcal{N}},
$$
where the completion is taken with respect to $\langle~,~\rangle$.

Let $T(\mathcal{H})$ be the associated tensor algebra with $\mathfrak{B}(\mathcal{H})$ and $\mathfrak{B}(T(\mathcal{H}))$, the vector spaces of bounded linear operators on $\mathcal{H}$ and $T(\mathcal{H})$, respectively.

Consider the $C^*$-probability space $(\mathfrak{B}(T(\mathcal{H})),\tau_{\mathcal{H}})$ and the self-adjoint linear operator $\mathfrak{a}\in\mathfrak{B}(T(\mathcal{H}))$, given by
$$
\mathfrak{a}:=\left(l(x)+l^*(x)+\Lambda(x)+s_1\id_{T(\mathcal{H})}\right),
$$
where $l(x)$ and $l^*(x)$ are the raising and lowering operators, respectively. Further, let $T_x$ be the multiplication operator on $\mathcal{H}$, i.e. $T_x(p(x)):=x\cdot p(x)$ for all $p(x)\in\mathfrak{m}$, and define $\Lambda(x):=\Lambda(T_x)$, as the gauge operator corresponding to $T_x$. 

The operator $\mathfrak{a}$ is bounded as $\|l(x)\|=\|l^*(x)\|=\|x\|=\sqrt{s_{2}}$, $\|s_1\cdot\id\|=|s_1|$ are so, and because $\Lambda(x)$ is also bounded as the support of $\rho$ is compact. 

By~[\cite{NS} Proposition~13.8] the distribution of $\mathfrak{a}$ is infinitely divisible, i.e. there exists a $\mu_{\mathfrak{a}}\in M_{\infty,\boxplus}(\R)$ such that
$$
m_n(\mathfrak{a})=\tau_{\mathcal{H}}(\mathfrak{a}^n)=\int_{\R}x^nd\mu_{\mathfrak{a}}=m_n(\mu_{\mathfrak{a}}),
$$
for all $n\in\N$.
From [\cite{NS} Proposition 13.5] and as in [\cite{NS} Theorem 13.16, p.233] we have
\begin{eqnarray*}
\kappa_1(\mu_{\mathfrak{a}})=\kappa_1(\mathfrak{a})&=&s_1\\
\kappa_2(\mu_{\mathfrak{a}})=\kappa_2(\mathfrak{a})&=&\kappa_2(l^*(x)l(x))=\langle x,x\rangle=s_2,
\end{eqnarray*}
and for $n\geq2$,
\begin{equation*}
\kappa_n(\mu_{\mathfrak{a}})=\kappa_n(\mathfrak{a})=\kappa_n(l^*(x),\underbrace{\Lambda(x),\dots,\Lambda(x)}_{\text{$(n-2)$-times}},l(x))=\langle x,\Lambda(x)^{n-2} x\rangle=\langle x,x^{n-1}\rangle=s_{n}.
\end{equation*}
As all the free cumulants of $\mu_{\mathfrak{a}}$ agree with the sequence $(s_n)_{n\in\N}$,  $\mu_{\mathfrak{a}}$ is the wanted measure. 

Let us show that $T_x$ is bounded for $\rho$ with compact support. 
For $\xi(x), f(x)\in L^2(\R,\rho)$ with $|f|\big|_{\supp(\rho)}\leq C_f$, $C_f>0$, we have
$$
\|\Lambda(f)(\xi)\|=\|f\cdot\xi\|\leq C_f\|\xi\|
$$
as
$$
\|f\xi\|^2=\int_{\R}|f(x)|^2|\xi(x)|^2 d\rho(x)\leq\int_{\R} C_f^2|\xi(x)|^2 d\rho\leq C_f^2\|\xi\|^2.
$$
Then for $m\in\N^*$, we have
\begin{eqnarray*}
\|\Lambda(x)(x^m)\|^2=\langle\Lambda(x)(x^m),\Lambda(x)(x^m)\rangle=\langle x^{m+1}, x^{m+1}\rangle=s_{2m+2}\\
\int_{\R}x^{2m}d\rho(x)=\int_{\R}x^2\cdot x^{2m-2}d\rho(x)\leq C_{x}\cdot\|x^{m-1}\|\leq C_{x}\cdot\langle x^m,x^m\rangle=C_{x}\|x^m\|^2.
\end{eqnarray*}
\end{proof}

For two power series, the point-wise or Hadamard multiplication, $\cdot_H$, is defined as:
$$
\left(\sum_{n=0}^{\infty} a_n z^n\right)\cdot_H \left(\sum_{n=0}^{\infty} b_n z^n\right):=\sum_{n=0}^{\infty} a_n  b_b z^n~.
$$

The $R$-transform is a monomorphism $R:M(\R)\rightarrow \R[[z]]$ which is given by $\mu\mapsto R_{\mu}(z)$. For $\mu,\nu\in\M_c(\R)$, we define a binary operation $\boxdot$, as follows: 
\begin{equation}
\label{boxdot}
\mu\boxdot\nu:=R^{-1}(R_{\mu}(z)\cdot_H R_{\nu}(z)),
\end{equation}
where $R^{-1}$ is the inverse of the $R$-transform. However, sometimes it is preferable to use $\mathcal{R}:=zR$.  
\begin{thm}
$(M_{\infty,\boxplus}(\R),\boxplus,\boxdot,\delta_0, \nu_{\infty,1,1})$ is a commutative semi-ring with the free Poisson distribution $\nu_{\infty,1,1}$ as multiplicative unit.  
\end{thm}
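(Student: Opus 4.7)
The plan is to transport everything through the $R$-transform, which by the earlier proposition identifies $(M_{\infty,\boxplus}(\R),\boxplus)$ isomorphically with the additive monoid $(\mathcal{R}_{\infty,\boxplus}(\R),+)$, and then verify the semi-ring axioms on the level of coefficient sequences. Under this identification, $\boxplus$ becomes addition of power series, while $\boxdot$ becomes the Hadamard (coefficient-wise) product. Commutativity and associativity of $\boxdot$, the distributive law $(\mu\boxplus\nu)\boxdot\eta=(\mu\boxdot\eta)\boxplus(\nu\boxdot\eta)$, and the absorbing behaviour of $\delta_0$ then reduce to the corresponding (trivial) properties of Hadamard multiplication on $\R[[z]]$, since $R_{\delta_0}(z)\equiv 0$ is the additive zero and is killed by coefficient-wise multiplication.

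The non-trivial content is (a) that $\nu_{\infty,1,1}$ acts as the multiplicative unit and (b) that $M_{\infty,\boxplus}(\R)$ is closed under $\boxdot$. For (a), I would read off from equation~(\ref{inf_Poisson}) with $\lambda=\alpha=1$ that $R_{\nu_{\infty,1,1}}(z)=\sum_{n\geq 0}z^n$, the sequence of all $1$'s; Hadamard multiplication by this sequence is the identity on $\R[[z]]$, so $\mu\boxdot\nu_{\infty,1,1}=R^{-1}(R_\mu\cdot_H R_{\nu_{\infty,1,1}})=R^{-1}(R_\mu)=\mu$. I would also verify en passant that $\nu_{\infty,1,1}\in M_{\infty,\boxplus}(\R)$, which is immediate from the definition of the free Poisson as a limit of $\boxplus$-iterates.

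The heart of the argument — and the main obstacle — is closure under $\boxdot$. By Proposition~\ref{Char_infinitely_div}, a real sequence $(s_n)_{n\in\N^*}$ is of the form $(\kappa_n(\mu))_{n\in\N^*}$ for some $\mu\in M_{\infty,\boxplus}(\R)$ if and only if it is exponentially bounded and conditionally positive definite. So given $\mu,\nu\in M_{\infty,\boxplus}(\R)$ with cumulants $s_n:=\kappa_n(\mu)$ and $t_n:=\kappa_n(\nu)$, I need to show that $(s_n t_n)_{n\in\N^*}$ again satisfies both properties. Exponential boundedness is immediate: if $|s_n|\leq C^n$ and $|t_n|\leq D^n$, then $|s_n t_n|\leq (CD)^n$. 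Conditional positive definiteness says the shifted Hankel-type kernels $(s_{i+j})_{i,j\geq 1}$ and $(t_{i+j})_{i,j\geq 1}$ are positive semi-definite; their entry-wise product $(s_{i+j}t_{i+j})_{i,j\geq 1}$ is then positive semi-definite by the Schur product theorem, which is exactly the conditional positive definiteness of $(s_n t_n)_{n\in\N^*}$. Applying the $(1.\Rightarrow 2.)$ direction of Proposition~\ref{Char_infinitely_div} produces the required measure $\mu\boxdot\nu\in M_{\infty,\boxplus}(\R)$, closing the argument.

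Finally I would collect the pieces: $(M_{\infty,\boxplus}(\R),\boxplus,\delta_0)$ is a commutative monoid by the earlier proposition; $(M_{\infty,\boxplus}(\R),\boxdot,\nu_{\infty,1,1})$ is a commutative monoid by the preceding paragraph combined with (a); distributivity follows from the identity $R_{\mu\boxplus\nu}\cdot_H R_\eta=R_\mu\cdot_H R_\eta+R_\nu\cdot_H R_\eta$ in $\R[[z]]$ after applying $R^{-1}$; and $\delta_0\boxdot\mu=\delta_0$ because $R_{\delta_0}=0$ annihilates every Hadamard product. This exhausts the semi-ring axioms.
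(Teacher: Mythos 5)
Your proposal is correct and follows essentially the same route as the paper: identify the unit via $R_{\nu_{\infty,1,1}}=(1,1,1,\dots)$, prove closure under $\boxdot$ by combining exponential boundedness of the product of cumulant sequences with the Schur product theorem applied to the shifted Hankel kernels, invoke the characterisation in Proposition~\ref{Char_infinitely_div} to produce the measure, and obtain associativity and distributivity component-wise through the $R$-transform. The only cosmetic difference is that the paper checks $\nu_{\infty,1,1}\in M_{\infty,\boxplus}(\R)$ via the analytic characterisation of Theorem~\ref{AnalyticChar} rather than via the limit construction, which changes nothing of substance.
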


\begin{proof}
We have $\nu_{\infty,1,1}\in\M_{\infty,\boxplus}(\R)$ which follows from, e.g. the characterisation in Theorem~\ref{AnalyticChar}~1, and further $R_{\nu_{\infty,1,1}}=(1,1,1,\dots)$, which is clearly the unit for $\boxdot$.

We have to show that this product is well defined.
\begin{enumerate}
\item
Let $\mu,\nu\in\M_{\infty,\boxplus}(\R)$ and  consider the corresponding free cumulant sequences $(k_n(\mu))_{n\in\N}$ and $(k_n(\nu))_{n\in\N}$. 
Both sequences are conditionally positive definite and by~[\cite{NS} Theorem 13.16, properties (1) and (2)], [\cite{NS} Theorem 13.16, property (3)] and [\cite{NS} Lemma 13.13] also exponentially bounded, i.e. $|\kappa_n(\bullet)|\leq C_{\bullet}^n$, $n\in\N$ for some constant $C_{\bullet}>0$, depending on the measures $\mu$ and $\nu$, respectively.

Hence, $|\kappa_n(\mu)\cdot\kappa_n(\nu)|\leq (C_{\mu}\cdot C_{\nu})^n$ for all $n\in\N$, i.e. the point-wise product of the free cumulants does not grow faster than exponentially. 

\item
In order to show positivity, we consider the corresponding Hankel matrices. Let $\tilde{\kappa}_n(\bullet):=\kappa_{n+2}(\bullet)$ for $n\in\N$. Then, by the Schur Product Theorem, 
$$
(\tilde{\kappa}_{i+j}(\mu)\cdot \tilde{\kappa}_{i+j}(\nu))_{0\leq i,j\leq n},
$$ 
is positive definite.  

Therefore the sequence $(h_n)_{n\in\N^*}$, with $h_1:=\kappa_1(\mu)\cdot\kappa_1(\nu)$ and $h_n:=\kappa_n(\mu)\cdot\kappa_n(\nu)$, $n\in\N^*$, is conditionally positive definite and exponentially bounded.

By Proposition~\ref{Char_infinitely_div} there exists 
a probability measure $\xi\in M_{\infty,\boxplus}(\R)$ such that $h_n=\kappa_n(\xi)$,$n\in\N^*$. Finally, let $\mu\boxdot\nu:=\xi$.
\item
Associativity and distributivity follow from the fact that the operations are given component-wise, which individually satisfy these properties. Apply $R$ to 
$(\mu\boxplus\nu)\boxdot\eta$, which by definition, yields, $R_{\mu\boxplus\nu}\cdot_H R_{\eta}=(R_{\mu}+R_{\nu})\cdot_H R_{\eta}$ from which the result $\mu\boxdot\eta\boxplus\nu\boxdot\eta$, after applying $R^{-1}$, follows.
\end{enumerate}
\end{proof}

We remark that the binary operation $\boxdot$ can be lifted via the exponential map to $M_{\infty,\boxtimes}^*(S^1)$ and $M_{\infty,\boxtimes}^*(\R^*_+)$, respectively. However, in the analytic case some additional considerations are necessary.
\subsection{Geometric properties}
\begin{prop}
There exists a surjection $\pi:M_{\infty,\boxplus}(\R)\rightarrow \operatorname{Meas}_c(\R)$, with fibre $\pi^{-1}(\rho)\cong\R\times\{\rho\}\cong\R$.
\end{prop}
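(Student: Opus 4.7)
The plan is to read the statement off Proposition~\ref{Char_infinitely_div} directly. The key observation is that item~3 of that proposition provides a canonical bijection between $M_{\infty,\boxplus}(\R)$ and pairs $(\gamma,\rho) \in \R \times \operatorname{Meas}_c(\R)$, via
$$
R_\mu(z) \;=\; \gamma_\mu + \int_\R \frac{z}{1-xz}\, d\rho_\mu(x), \qquad \gamma_\mu := \kappa_1(\mu).
$$
I would therefore define $\pi(\mu) := \rho_\mu$. Well-definedness and uniqueness of $\rho_\mu$ come from reading off coefficients: $\gamma_\mu = \kappa_1(\mu)$ and $m_n(\rho_\mu) = \kappa_{n+2}(\mu)$ for $n\geq 0$, and any compactly supported Borel measure is determined by its moments.

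For surjectivity, given an arbitrary $\rho \in \operatorname{Meas}_c(\R)$, I would exhibit an explicit preimage. Pick any $\gamma \in \R$ (e.g.\ $\gamma = 0$) and define a sequence $s_1 := \gamma$, $s_{n+2} := m_n(\rho)$ for $n \geq 0$. The shifted tail $(s_2, s_3, \ldots)$ is the moment sequence of the positive measure $\rho$, hence positive definite, so $(s_n)_{n \in \N^*}$ is conditionally positive definite in the sense of the preceding subsection; exponential boundedness follows immediately from compactness of $\supp(\rho)$. Proposition~\ref{Char_infinitely_div}, implication $(1) \Rightarrow (2)$, then produces a $\mu \in M_{\infty,\boxplus}(\R)$ with $\kappa_n(\mu) = s_n$, and by construction $\pi(\mu) = \rho$.

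The fiber description is now immediate from the same construction: letting $\gamma$ vary over $\R$ while keeping $\rho$ fixed produces a family $\{\mu_{\gamma,\rho} : \gamma \in \R\} \subset \pi^{-1}(\rho)$. Distinct values of $\gamma$ yield distinct measures, since $\gamma = \kappa_1(\mu_{\gamma,\rho})$ is recoverable from $\mu$, and conversely every element of $\pi^{-1}(\rho)$ arises this way (just take $\gamma := \kappa_1(\mu)$). Thus $\gamma \mapsto \mu_{\gamma,\rho}$ gives the desired bijection $\R \xrightarrow{\sim} \pi^{-1}(\rho) \cong \R \times \{\rho\}$.

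I do not expect a genuine obstacle: the entire argument amounts to unpacking Proposition~\ref{Char_infinitely_div}. The only subtle point worth flagging is that conditional positive definiteness depends only on the shifted tail $(s_2, s_3, \ldots)$ and is therefore insensitive to the value of $s_1$; this is exactly what legitimises both the surjectivity construction (any $\gamma$ works) and the one-parameter fiber description. If one wished to upgrade the statement to a homeomorphism or a bundle, topology on $\operatorname{Meas}_c(\R)$ would need to be fixed and the moment map shown continuous, but that lies outside what the stated proposition asks for.
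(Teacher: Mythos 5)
Your proposal is correct and is precisely the argument the paper intends: the proposition is stated without an explicit proof because it is an immediate unpacking of item~3 of the characterisation proposition (the unique pair $(\gamma,\rho)$ attached to $R_\mu$), exactly as you do, with surjectivity via $(1)\Rightarrow(2)$ and the fibre parametrised by $\gamma=\kappa_1(\mu)$. Your flag that conditional positive definiteness is insensitive to $s_1$ is the right observation and matches the paper's later remark that the pair $(s_1,\rho)$ uniquely characterises $\mu$.
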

\begin{prop}
The space $M_{\infty,\boxplus}(\R)$ carries two actions: for $\mu\in M_{\infty,\boxplus}(\R)$, $r\in\R$ and $c\in\R_+^*$, we have:
\begin{itemize}
\item the $\R$-action,(shift along the fibre) is given by:
$$
r.\mu:=\delta_{r}\boxplus\mu,
$$
\item
the $\R^*_+$-action (scaling) is given by: 
$$
c.\mu:=\nu_{\infty,c,1}\boxdot \mu.
$$
\end{itemize}
\end{prop}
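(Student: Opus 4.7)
The plan is to verify the two group action axioms (identity and associativity/compatibility) for each of the proposed actions, using as the main input the sub-monoid and semi-ring structure on $M_{\infty,\boxplus}(\R)$ already established, together with explicit computations on the relevant $R$-transforms.

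For the $\R$-action, I would first observe that $\delta_r\in M_{\infty,\boxplus}(\R)$ for every $r\in\R$ (any Dirac mass is trivially infinitely divisible, since $\delta_r=\delta_{r/n}^{\boxplus n}$), so by the sub-monoid statement proved earlier, $r.\mu=\delta_r\boxplus\mu\in M_{\infty,\boxplus}(\R)$. The identity axiom $0.\mu=\mu$ is precisely the fact that $\delta_0$ is the neutral element of $(M_{\infty,\boxplus}(\R),\boxplus)$. The compatibility axiom reduces to the formula $\delta_r\boxplus\delta_s=\delta_{r+s}$, which in turn is immediate from the linearisation of $\boxplus$ by the $R$-transform: $R_{\delta_r}(z)=r$, so $R_{\delta_r\boxplus\delta_s}(z)=r+s=R_{\delta_{r+s}}(z)$, and then associativity of $\boxplus$ gives
\[
(r+s).\mu=\delta_{r+s}\boxplus\mu=(\delta_r\boxplus\delta_s)\boxplus\mu=\delta_r\boxplus(\delta_s\boxplus\mu)=r.(s.\mu).
\]

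For the $\R^*_+$-action, the key point is to identify how the measures $\nu_{\infty,c,1}$ behave under the second multiplication $\boxdot$. From formula~(\ref{inf_Poisson}) one reads off that the free cumulant sequence of $\nu_{\infty,c,1}$ is the constant sequence $(c,c,c,\dots)$. Hence, by the very definition~(\ref{boxdot}) of $\boxdot$ in terms of the Hadamard product of $R$-transforms, for any $c,d>0$ one has $\nu_{\infty,c,1}\boxdot\nu_{\infty,d,1}=\nu_{\infty,cd,1}$, and $\nu_{\infty,1,1}$ is the multiplicative unit, which was already noted in the previous theorem. Closure $c.\mu\in M_{\infty,\boxplus}(\R)$ follows because $(M_{\infty,\boxplus}(\R),\boxplus,\boxdot)$ is a semi-ring, so $\boxdot$ does not leave the set. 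Identity $1.\mu=\nu_{\infty,1,1}\boxdot\mu=\mu$ is then immediate, and compatibility follows from associativity of $\boxdot$ together with the multiplicativity just noted:
\[
(cd).\mu=\nu_{\infty,cd,1}\boxdot\mu=(\nu_{\infty,c,1}\boxdot\nu_{\infty,d,1})\boxdot\mu=\nu_{\infty,c,1}\boxdot(\nu_{\infty,d,1}\boxdot\mu)=c.(d.\mu).
\]

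The only step that requires more than bookkeeping is the identity $\nu_{\infty,c,1}\boxdot\nu_{\infty,d,1}=\nu_{\infty,cd,1}$; this is really the crux of why the semi-ring multiplication descends to a one-parameter multiplicative action, and I would expect it to be the place where a careful reader wants the computation spelled out via~(\ref{inf_Poisson}). Everything else is a formal consequence of the semi-ring and monoid structure on $M_{\infty,\boxplus}(\R)$ together with the elementary identity $\delta_r\boxplus\delta_s=\delta_{r+s}$.
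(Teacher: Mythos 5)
Your proposal is correct and follows the same route as the paper: the paper's own (one-line) proof checks exactly the $\R$-action axioms via $\delta_{r+r'}\boxplus\mu=\delta_r\boxplus(\delta_{r'}\boxplus\mu)$ and $\delta_0\boxplus\mu=\mu$, which is what you do. Your additional verification of the scaling action -- reading off from~(\ref{inf_Poisson}) that $\nu_{\infty,c,1}$ has constant free cumulant sequence $(c,c,c,\dots)$, so that $\nu_{\infty,c,1}\boxdot\nu_{\infty,d,1}=\nu_{\infty,cd,1}$ and closure follows from the semi-ring theorem -- is exactly the argument the paper defers to its later proposition on the $\boxdot$-closure of the free Poisson family, so you have simply made explicit what the paper leaves implicit.
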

\begin{proof}
$(r+r').\mu=\delta_{r+r'}\boxplus\mu=\delta_r\boxplus(\delta_{r'}\boxplus\mu)$ and $0.\mu=\delta_0\boxplus\mu=\mu$.
\end{proof}

Example: for the Dirac, semi-circular and free Poisson distribution, with $c,r,\lambda\geq0$, and $\alpha\in\R$, we have
\begin{eqnarray*}
c.\delta_a&=& \delta_{c\cdot a},\\
c.\gamma_{a,r}&=& \gamma_{c\cdot a,{\sqrt{c}}\cdot r},\\
c.\nu_{\lambda,\alpha}&=& \nu_{c\cdot\lambda,\alpha},
\end{eqnarray*}
which shows that these classes of measures are invariant under the action. In particular, the semi-circular distributions form a convex sub-cone.

\subsection{Endomorphisms}

\begin{prop}
The family of Dirac distributions is a two-sided ideal in $M_{\infty,\boxplus}(\R)$. We have  
\begin{eqnarray*}
\delta_a\boxplus\delta_b&=&\delta_{a+b},\\
\delta_a\boxdot\delta_b&=&\delta_{ab}.
\end{eqnarray*}
Further for $\mu\in M_{\infty,\boxplus}(\R)$ we have
\begin{equation*}
\delta_a\boxdot\mu=\delta_{\kappa_1(\mu)\cdot a}
\end{equation*}
where $\kappa_1(\mu)$ is the first free cumulant of $\mu$.
\end{prop}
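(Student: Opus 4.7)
The plan is to reduce each of the three identities to a statement about $R$-transforms, invoke the fact that $R$ is a monomorphism (as established earlier in the excerpt), and then derive the ideal claim as a formal consequence. The pivotal observation is that a Dirac mass has a constant $R$-transform, so Hadamard multiplication by it becomes an evaluation at the constant coefficient.

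First I would compute $R_{\delta_a}$. Since $m_n(\delta_a)=a^n$, the Cauchy transform is $G_{\delta_a}(z)=1/(z-a)$, whose compositional inverse is $1/z+a$, so $R_{\delta_a}(z)\equiv a$ is the constant series; equivalently $\kappa_1(\delta_a)=a$ and $\kappa_n(\delta_a)=0$ for $n\geq 2$, which is also immediate from the moment-cumulant formula. The identity $\delta_a\boxplus\delta_b=\delta_{a+b}$ then follows from additivity of $R$, since $R_{\delta_a}+R_{\delta_b}=a+b=R_{\delta_{a+b}}$ and $R$ is injective on $M_{\infty,\boxplus}(\R)$.

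For the two $\boxdot$-identities I would appeal directly to the definition (\ref{boxdot}), i.e.\ $\mu\boxdot\nu=R^{-1}(R_\mu\cdot_H R_\nu)$. The Hadamard product of the constant series $a$ and $b$ is the constant $ab$, whence $\delta_a\boxdot\delta_b=R^{-1}(ab)=\delta_{ab}$. More generally, writing $R_\mu(z)=\kappa_1(\mu)+\kappa_2(\mu)z+\kappa_3(\mu)z^2+\cdots$ and Hadamard-multiplying by $R_{\delta_a}(z)=a+0\cdot z+0\cdot z^2+\cdots$ annihilates every coefficient of $R_\mu$ other than the constant term, leaving $a\,\kappa_1(\mu)$; hence $\delta_a\boxdot\mu=R^{-1}(a\,\kappa_1(\mu))=\delta_{a\kappa_1(\mu)}$. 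The ideal claim is now a formal consequence: closure of the Diracs under $\boxplus$ is the first identity, and two-sided absorption under $\boxdot$ is the third identity combined with the commutativity of $\boxdot$ proved in the preceding theorem.

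I do not anticipate a real obstacle. The only non-routine ingredient is the calculation $R_{\delta_a}\equiv a$, and that is standard. One should merely verify that $\delta_{a\kappa_1(\mu)}$ genuinely lies in $M_{\infty,\boxplus}(\R)$, which is trivial since every Dirac mass is freely infinitely divisible (its $R$-transform is constant, hence satisfies the analytic characterisation of Theorem~\ref{AnalyticChar}).
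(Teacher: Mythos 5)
Your proposal is correct and is essentially the argument the paper intends (the paper leaves this proposition unproved, but the neighbouring proofs, e.g.\ for the semicircular family, use exactly this mechanism: component-wise/Hadamard multiplication of $R$-transforms, with $R_{\delta_a}\equiv a$, i.e.\ $\kappa_1(\delta_a)=a$ and $\kappa_n(\delta_a)=0$ for $n\geq 2$, plus injectivity of $R$). Your closing check that the resulting Dirac mass is again in $M_{\infty,\boxplus}(\R)$ is a sensible addition and consistent with Theorem~\ref{AnalyticChar}.
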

For $a,b\in\R$ and $r,s>0$, the {\bf semicircle law} centred at $a$ and of radius $r$, is defined as the distribution $\gamma_{a,r}:\C[z]\rightarrow\C$, given by
\begin{equation*}
\gamma_{a,r}(z^n):=\frac{2}{\pi r^2}\int_{a-r}^{a+r} t^n\sqrt{r^2-(t-a)^2}\,dt\qquad \forall n\in\N,
\end{equation*}
and its ${R}$-transform is 
\begin{equation}
\label{Rsemicircle}
{R}_{\gamma_{a,r}}(z)=a+\frac{r^2}{4}z~.
\end{equation}
\begin{prop}
Let $\gamma_{a,0}:=\delta_a$. The family of semicircular distributions is a two-sided ideal in $M_{\infty,\boxplus}(\R)$, which contains the Dirac distributions as sub-ideal. We have 
\begin{eqnarray}\nonumber
\gamma_{a,r}\boxplus\gamma_{b,s} & = & \gamma_{a+b,\sqrt{r^2+s^2}}, \\
\gamma_{a,r}\boxdot\gamma_{b,s} & = &  \gamma_{ab, rs/2}.
\end{eqnarray}  
\end{prop}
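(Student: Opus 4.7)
The strategy is to translate the entire statement into a computation with $R$-transforms, exploiting that $R$ is an injective homomorphism $M_{\infty,\boxplus}(\R)\to\R[[z]]$ sending $\boxplus$ to the sum of power series and, by definition~(\ref{boxdot}), sending $\boxdot$ to the Hadamard product. Since $R_{\gamma_{a,r}}(z)=a+\frac{r^2}{4}z$ from~(\ref{Rsemicircle}) is a polynomial of degree at most one, the entire proof reduces to coefficient-wise manipulations, and the only real content is checking that the resulting polynomial of degree $\leq 1$ is again the $R$-transform of an honest semicircular distribution.

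For the two internal product laws, I would just substitute into the formulas. Additivity of $R$ under $\boxplus$ gives $R_{\gamma_{a,r}}+R_{\gamma_{b,s}} = (a+b)+\frac{r^2+s^2}{4}z$, which is $R_{\gamma_{a+b,\sqrt{r^2+s^2}}}(z)$; injectivity of $R$ then yields the first formula. The Hadamard product gives $R_{\gamma_{a,r}}\cdot_H R_{\gamma_{b,s}} = ab + \frac{r^2 s^2}{16}z = ab + \frac{(rs/2)^2}{4}z = R_{\gamma_{ab,rs/2}}(z)$, yielding the second. Setting $r=s=0$ recovers the Dirac formulas, so the family of Dirac distributions sits inside the semicircular family as a sub-ideal.

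For the two-sided ideal property, take an arbitrary $\mu\in M_{\infty,\boxplus}(\R)$ with $R_\mu(z)=\sum_{n\geq 0}\kappa_{n+1}(\mu)z^n$. Because $R_{\gamma_{a,r}}(z)$ has only two non-zero coefficients, the Hadamard product annihilates every free cumulant $\kappa_n(\mu)$ with $n\geq 3$, leaving
\[
R_\mu(z)\cdot_H R_{\gamma_{a,r}}(z) = \kappa_1(\mu)\,a \;+\; \kappa_2(\mu)\,\tfrac{r^2}{4}\,z.
\]
To read off the right-hand side as an $R$-transform of a semicircular, I set $a'=\kappa_1(\mu)a$ and $r'=r\sqrt{\kappa_2(\mu)}$, obtaining $\mu\boxdot\gamma_{a,r}=\gamma_{a',r'}$.

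The one non-routine point — and the main obstacle — is justifying that $r'$ is a non-negative real, i.e.\ that $\kappa_2(\mu)\geq 0$ for every $\mu\in M_{\infty,\boxplus}(\R)$. This I would extract from Proposition~\ref{Char_infinitely_div}: the free cumulant sequence $(\kappa_n(\mu))_{n\in\N^*}$ is conditionally positive definite, meaning the Hankel matrix $(\kappa_{i+j+2}(\mu))_{i,j\geq 0}$ is positive semi-definite; its $(0,0)$-entry is exactly $\kappa_2(\mu)$, forcing $\kappa_2(\mu)\geq 0$. Commutativity of $\boxdot$ then takes care of the two-sidedness of the ideal, and the containment of the Dirac ideal is recovered by taking $r=0$.
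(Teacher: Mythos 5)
Your proposal is correct and follows essentially the same route as the paper: both reduce everything to $R$-transforms, verify the two formulas by additivity and Hadamard multiplication of $R_{\gamma_{a,r}}(z)=a+\frac{r^2}{4}z$, and obtain the ideal property from the non-negativity of the second free cumulant of an arbitrary $\mu\in M_{\infty,\boxplus}(\R)$. The only (minor) difference is how you justify $\kappa_2(\mu)\geq 0$: you read it off the $(0,0)$-entry of the Hankel matrix in the conditional positive definiteness condition, while the paper uses the equivalent integral representation to get $\kappa_2(\mu)=\rho(\R)\geq 0$ --- both are just different clauses of the same characterisation proposition, and your version is, if anything, spelled out in more detail (explicitly noting that the higher cumulants are annihilated and identifying $a'=\kappa_1(\mu)a$, $r'=r\sqrt{\kappa_2(\mu)}$).
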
 
\begin{proof}
We have ${R}_{\gamma_{a,r}\boxdot\gamma_{b,s}}(z)=abz+\frac{r^2 s^2}{4\cdot 4}z^2$ by component-wise multiplication of the ${R}$-transforms and the relation~(\ref{Rsemicircle}).

For any $\mu\in M$, $\kappa(\mu)=(\kappa_1(\mu),\kappa_2(\mu),\dots)$ with $\kappa_1(\mu)\in\R$ and $\kappa_2(\mu)\geq0$ as, e.g. by~[\cite{NS} Prop], $\kappa_2(\mu)=m_0(\rho)=\rho(\R)\geq0$ for some compactly supported finite measure $\rho$. Therefore, the non-negativity of the second free cumulant is always preserved under the commutative point-wise multiplication, and therefore the property of being an ideal follows. 
\end{proof}
\begin{prop}
The family of free Poisson distributions $\mathbf{fP}$ with rate $\lambda\in\R_+$ and jump size $\alpha\in\R$ is closed with respect to the $\boxdot$-convolution, i.e. for $\lambda,\lambda'\in\R_+$ and $\alpha,\beta\in\R$ we have
$$
\nu_{\infty,\lambda,\alpha}\boxdot\nu_{\infty,\lambda',\beta}=\nu_{\infty,\lambda\lambda',\alpha\beta}.
$$
Therefore, $\mathbf{fP}$ is a $\boxdot$-monoid, and for $(\lambda,\alpha)\in\R^*_+\times\R^*$, an abelian group. 
\end{prop}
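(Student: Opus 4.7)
By formula~(\ref{inf_Poisson}), the $R$-transform of a free Poisson distribution has the explicit expansion
\[
R_{\nu_{\infty,\lambda,\alpha}}(z) \;=\; \lambda\alpha\,\frac{1}{1-\alpha z} \;=\; \sum_{n=0}^{\infty}\lambda\alpha^{n+1}\,z^n,
\]
so its $n$-th coefficient is $\lambda\alpha^{n+1}$. The strategy is to compute $R_{\nu_{\infty,\lambda,\alpha}\boxdot\nu_{\infty,\lambda',\beta}}$ by the very definition~(\ref{boxdot}), namely as the pull-back under $R^{-1}$ of the Hadamard product of the two $R$-transforms, and to recognise the result as the $R$-transform of $\nu_{\infty,\lambda\lambda',\alpha\beta}$. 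Since $R$ is a monomorphism on $M_{\infty,\boxplus}(\R)$, this identification yields the stated convolution identity.

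Concretely, the $n$-th coefficient of $R_{\nu_{\infty,\lambda,\alpha}}\cdot_H R_{\nu_{\infty,\lambda',\beta}}$ is
\[
(\lambda\alpha^{n+1})\,(\lambda'\beta^{n+1}) \;=\; (\lambda\lambda')\,(\alpha\beta)^{n+1},
\]
which matches the $n$-th coefficient of $R_{\nu_{\infty,\lambda\lambda',\alpha\beta}}$ by~(\ref{inf_Poisson}). Since $\lambda\lambda'\in\R_+$ and $\alpha\beta\in\R$, the resulting measure again lies in $\mathbf{fP}$, so closure under $\boxdot$ is established. Note that the need to stay inside $M_{\infty,\boxplus}(\R)$ for $R^{-1}$ to be applicable is guaranteed by the preceding semi-ring theorem, which tells us that $\boxdot$ is well-defined on the whole of $M_{\infty,\boxplus}(\R)$.

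For the algebraic structure, commutativity and associativity of $\boxdot$ on $\mathbf{fP}$ are inherited from the coordinate-wise multiplication of the parameters $(\lambda,\alpha)$, and the neutral element is $\nu_{\infty,1,1}$, which, by the preceding theorem, is precisely the $\boxdot$-unit in $M_{\infty,\boxplus}(\R)$. For $(\lambda,\alpha)\in\R^*_+\times\R^*$ the $\boxdot$-inverse is $\nu_{\infty,1/\lambda,1/\alpha}$, since the product of parameters equals $(1,1)$. This yields the claimed abelian group structure on the subset indexed by $\R^*_+\times\R^*$.

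The key observation driving everything is the fact that, on free Poisson parameters, $\boxdot$ acts as ordinary multiplication of the pairs $(\lambda,\alpha)$, a consequence of the geometric-series form of~(\ref{inf_Poisson}). Once this is recognised, the argument reduces to a coefficient-level identity and parameter bookkeeping, so no serious obstacle arises; the only point demanding attention is verifying the edge cases $\lambda=0$ or $\alpha=0$ (which collapse the measure to $\delta_0$), and these are precisely the ones excluded to pass from the monoid to the group.
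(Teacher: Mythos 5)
Your proof is correct and follows essentially the same route as the paper: the paper's proof is exactly the observation that, by the coefficient representation~(\ref{inf_Poisson}), the Hadamard (component-wise) product of the $R$-transforms multiplies the parameters, so $\nu_{\infty,\lambda,\alpha}\boxdot\nu_{\infty,\lambda',\beta}=\nu_{\infty,\lambda\lambda',\alpha\beta}$. You merely spell out the coefficient computation and the unit/inverse bookkeeping that the paper leaves implicit.
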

\begin{proof}
The statement follows from the representation given in equation~(\ref{inf_Poisson}) by component-wise multiplication.
\end{proof}

Let us remark that a more general statement should be valid which holds also for the {\bf compound free Poisson distribution}, cf.~[\cite{NS}~p.206]. 

We derived our analytic results for compactly supported probability measures, but it is natural to assume that one can extend them to all freely infinitely divisible probability measures.

For $a\in\R$, we define the {\bf Teichmüller representative} as $\tau:\R\rightarrow M_{\infty,\boxplus}(\R)$, $a\mapsto\nu_{\infty,1,a}$ which gives a group isomorphism $\tau:(\R^*,\cdot)\rightarrow (\nu_{\infty,1,\bullet},\boxdot)$.

Let us define the shift operator $\mathbf{V}$.  For $\mu\in M_{\infty,\boxplus}(\R)$, with free cumulants $\kappa({\mu})=(\kappa_n({\mu}))_{n\in\N^*}$, let $\mathbf{V}(\mu)\in M_{\infty,\boxplus}(\R)$, such that for all $n\in\N^*$, we have:
$$
\kappa_n(\mathbf{V}(\mu)):=\kappa_{n+2}(\mu),
$$
which by Lemma~\ref{shifted_cummulants} is well-defined.
\begin{prop}
The operator $\mathbf{V}$ defines a semi-ring endomorphism of $M_{\infty,\boxplus}(\R)$, i.e. for $\mu_1,\mu_2\in M_{\infty,\boxplus}(\R)$, we have:
\begin{eqnarray*}
\mathbf{V}(\mu_1\boxplus\mu_2)&=&\mathbf{V}(\mu_1)\boxplus \mathbf{V}(\mu_2),\\
\mathbf{V}(\mu_1\boxdot\mu_2)&=&\mathbf{V}(\mu_1)\boxdot \mathbf{V}(\mu_2).
\end{eqnarray*}
In particular, it respects the additive and multiplicative unit, i.e. we have
\begin{eqnarray*}
\mathbf{V}(\delta_0)&=&\delta_0,\\
\mathbf{V}(\nu_{\infty,1,1})&=&\nu_{\infty,1,1}.
\end{eqnarray*}
\end{prop}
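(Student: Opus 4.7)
The approach is to reduce the homomorphism property to component-wise identities on free cumulant sequences, exploiting the fact that both $\boxplus$ and $\boxdot$ act diagonally on cumulants and that $\mathbf{V}$ is a shift on the same sequence. Since a freely infinitely divisible measure is uniquely determined by its free cumulant sequence (via the moment-cumulant formula), it suffices to verify equality of cumulants on each side of each identity. Lemma~\ref{shifted_cummulants} already guarantees that the shifted sequence $(\kappa_{n+2}(\mu))_{n\in\N^*}$ is realised by an element of $M_{\infty,\boxplus}(\R)$, so $\mathbf{V}$ has codomain inside $M_{\infty,\boxplus}(\R)$ and the statements below make sense.

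First I would record the two basic component-wise formulae. For $\boxplus$, the standard Nica--Speicher identity $\kappa_n(\mu_1\boxplus\mu_2)=\kappa_n(\mu_1)+\kappa_n(\mu_2)$ holds for all $n\in\N^*$. For $\boxdot$, the definition in~\eqref{boxdot} via the Hadamard product of $R$-transforms, together with the fact that the coefficients of $R_\mu$ are precisely the free cumulants of $\mu$, yields $\kappa_n(\mu_1\boxdot\mu_2)=\kappa_n(\mu_1)\cdot\kappa_n(\mu_2)$ for all $n\in\N^*$. Both identities were already used implicitly to show that $\boxdot$ is well-defined on $M_{\infty,\boxplus}(\R)$.

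Next I would check the two homomorphism identities by evaluating cumulants. For any $n\in\N^*$,
\begin{equation*}
\kappa_n(\mathbf{V}(\mu_1\boxplus\mu_2))=\kappa_{n+2}(\mu_1\boxplus\mu_2)=\kappa_{n+2}(\mu_1)+\kappa_{n+2}(\mu_2)=\kappa_n(\mathbf{V}(\mu_1))+\kappa_n(\mathbf{V}(\mu_2)),
\end{equation*}
which equals $\kappa_n(\mathbf{V}(\mu_1)\boxplus\mathbf{V}(\mu_2))$; the same computation with the product in place of the sum handles $\boxdot$. Since the common cumulant sequence lies in $M_{\infty,\boxplus}(\R)$ (apply Lemma~\ref{shifted_cummulants} to each factor and Proposition~\ref{Char_infinitely_div} to the combination), uniqueness of the measure with a prescribed cumulant sequence yields the claimed equalities of measures.

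Finally, the units are checked directly: $\kappa_n(\delta_0)=0$ for every $n\in\N^*$, so $\kappa_n(\mathbf{V}(\delta_0))=\kappa_{n+2}(\delta_0)=0$, whence $\mathbf{V}(\delta_0)=\delta_0$; and by~\eqref{inf_Poisson} with $\lambda=\alpha=1$ the cumulant sequence of $\nu_{\infty,1,1}$ is $(1,1,1,\dots)$, which is invariant under the shift, so $\mathbf{V}(\nu_{\infty,1,1})=\nu_{\infty,1,1}$. There is essentially no hard step: the only subtlety is that $\mathbf{V}$ must land in $M_{\infty,\boxplus}(\R)$ for the right-hand sides to make sense, but this is precisely the content of Lemma~\ref{shifted_cummulants}. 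The rest is a transparent bookkeeping argument turning ``shift commutes with component-wise operations'' into the desired endomorphism property.
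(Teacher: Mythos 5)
Your proof is correct and follows the same route as the paper: both reduce everything to the component-wise cumulant identities $\kappa_n(\mu_1\boxplus\mu_2)=\kappa_n(\mu_1)+\kappa_n(\mu_2)$ and $\kappa_n(\mu_1\boxdot\mu_2)=\kappa_n(\mu_1)\cdot\kappa_n(\mu_2)$, with the shift $\mathbf{V}$ commuting with these operations and Lemma~\ref{shifted_cummulants} guaranteeing well-definedness. Your write-up is simply more explicit than the paper's (which leaves the unit computations and the uniqueness-of-measure step implicit), but the underlying argument is identical.
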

\begin{proof} 
From
$\kappa_n(\mu_1\boxplus\mu_2)=\kappa_n(\mu_1)+\kappa_n(\mu_2)$ and, by definition $\kappa_n(\mu_1\boxdot\mu_2)=\kappa_n(\mu_1)\cdot_{H}\kappa_n(\mu_2)$, $n\in\N^*$, the claim follows.
\end{proof}

For $a\in A^{\N^*}$, $n\in\N$, with $0^0:=1$, let $\mathbf{f}_n:A^{\N^*}\rightarrow A^{\N^*}$ be given by
\begin{equation}
\label{Frob_map}
(a_j)_{j\in\N^*}\mapsto (a_j^n)_{j\in\N^*}.
\end{equation}
At the level of measures we have, by slight abuse of notation, $\mathbf{f}_n(\mu):=R^{-1}(\mathbf{f}_n(\kappa(\mu)))$, which is in fact well defined.
\begin{prop}
For $\mu\in M_{\infty,\boxplus}(R)$, $n\in\N$, we have that:
\begin{equation}
\mathbf{f}_n(\mu)=\mu^{\boxdot n},
\end{equation}
and $\mathbf{f}_n$ yields a $\boxdot$-endomorphisms, i.e. for $\mu,\nu\in M_{\infty,\boxplus}(\R)$: 
\begin{eqnarray*}
\mathbf{f}_n(\mu\boxdot\nu)&=&\mathbf{f}_n(\mu)\boxdot\mathbf{f}_n(\nu),\\
(\mu\boxdot\nu)^{\boxdot n}&=&\mu^{\boxdot n}\boxdot\nu^{\boxdot n}.
\end{eqnarray*}
\end{prop}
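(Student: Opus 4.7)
The strategy is to pass everything to free cumulant sequences via the $R$-transform and then invoke the defining property of $\boxdot$, namely that $R_{\mu\boxdot\nu}=R_\mu\cdot_H R_\nu$, so that $\boxdot$-products of measures correspond to component-wise products of the sequences $\kappa(\mu)=(\kappa_j(\mu))_{j\in\N^*}$. The map $\mathbf{f}_n$ from (\ref{Frob_map}) acts by raising each component to the $n$th power, which is exactly what one obtains by iterating the Hadamard product $n$ times.

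First I would verify the identity $\mathbf{f}_n(\mu)=\mu^{\boxdot n}$ by induction on $n$, with the convention that $\mu^{\boxdot 0}$ is the $\boxdot$-unit $\nu_{\infty,1,1}$ (whose free cumulant sequence is $(1,1,1,\dots)$, matching $(\kappa_j(\mu)^0)_j$ under $0^0:=1$). The induction step is immediate: $\kappa_j(\mu^{\boxdot(n+1)})=\kappa_j(\mu^{\boxdot n})\cdot\kappa_j(\mu)=\kappa_j(\mu)^n\cdot\kappa_j(\mu)=\kappa_j(\mu)^{n+1}$. Applying $R^{-1}$ gives $\mu^{\boxdot(n+1)}=R^{-1}(\mathbf{f}_{n+1}(\kappa(\mu)))=\mathbf{f}_{n+1}(\mu)$.

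Next I would prove the endomorphism property. Using the defining relation of $\boxdot$ from (\ref{boxdot}), the free cumulants of $\mathbf{f}_n(\mu\boxdot\nu)$ are $(\kappa_j(\mu)\kappa_j(\nu))^n=\kappa_j(\mu)^n\kappa_j(\nu)^n=\kappa_j(\mathbf{f}_n(\mu))\kappa_j(\mathbf{f}_n(\nu))$, which is the cumulant sequence of $\mathbf{f}_n(\mu)\boxdot\mathbf{f}_n(\nu)$. Combining this with the first part yields the final identity $(\mu\boxdot\nu)^{\boxdot n}=\mu^{\boxdot n}\boxdot\nu^{\boxdot n}$.

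The only genuine obstacle is well-definedness: one must check that $\mathbf{f}_n(\mu)$ really lies in $M_{\infty,\boxplus}(\R)$, i.e. that the sequence $(\kappa_j(\mu)^n)_j$ is conditionally positive definite and exponentially bounded, as required by Proposition~\ref{Char_infinitely_div}. Exponential boundedness is clear since $|\kappa_j(\mu)|\leq C^j$ gives $|\kappa_j(\mu)^n|\leq C^{nj}=(C^n)^j$. Conditional positive definiteness follows from the Schur product theorem applied $n$ times to the Hankel matrix $(\kappa_{i+j+2}(\mu))_{i,j\geq 0}$ (the same argument already used in the proof that $\boxdot$ closes on $M_{\infty,\boxplus}(\R)$), or alternatively by noting inductively that $\mu^{\boxdot n}=\mu\boxdot\mu^{\boxdot(n-1)}$ stays in $M_{\infty,\boxplus}(\R)$ by the preceding theorem. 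This closure is what allows the $R^{-1}$ in the definition of $\mathbf{f}_n(\mu)$ to be taken within $M_{\infty,\boxplus}(\R)$ rather than merely within formal power series.
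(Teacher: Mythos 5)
Your argument is correct and follows essentially the same route as the paper: identifying $\boxdot$ with the Hadamard product of free cumulant sequences via the $R$-transform, so that $\mathbf{f}_n$ on measures is just the $n$-fold component-wise power, from which both identities follow immediately. Your added check that $(\kappa_j(\mu)^n)_j$ remains conditionally positive definite and exponentially bounded makes explicit the well-definedness that the paper only asserts before stating the proposition, but it is the same underlying argument (Schur product / closure of $\boxdot$), not a different method.
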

\begin{proof}
$\mathbf{f}_n(\kappa(\mu))=(\kappa_j(\mu)^n)_{j\in\N^*}=\kappa(\mu)^{\boxdot n}$ from which the claim follows.
\end{proof}

We remark that the set of free Poisson distributions is invariant with respect to  $\mathbf{V}$ and $\mathbf{f}_n$, in particular 
$$
\mathbf{V}(\nu_{\infty,\lambda,\alpha})=\nu_{\infty,\lambda\alpha^2,\alpha}.
$$
\subsection{Relation with classical infinite divisibility}
The main references for this section are~\cite{BNT,BPB,C2014}. Here, we denote by $\operatorname{ID}(\R,*)$ and $\operatorname{ID}(\R,\boxplus)$ the sets of $*$-infinitely (classically) divisible and $\boxplus$-infinitely (freely) divisible probability measure on the real line; i.e. there is no restriction on the support.  

We have $M_{\infty,\boxplus}(\R)\subset\operatorname{ID}(\R,\boxplus)$ as a dense sub-monoid and that there is a bijection $\varphi$ between characteristic pairs $(\gamma,\sigma)$ and $\bullet$-infinitely divisible probability measures on $\R$, where $\bullet=*,\boxplus$, i.e.  we have
\begin{equation}
\label{LKpairs}
\varphi_{\operatorname{cp}}:\R\times\operatorname{Meas}(\R)\rightarrow\operatorname{ID}(\R,\bullet).
\end{equation}

Let $\mu\in\operatorname{ID}(\R,\boxplus)$ with characteristic pair $(\gamma,\sigma)$. Then from~[\cite{BNT} (4.3)] we obtain
\begin{equation}
\label{R-trafo_char_pair}
R_{\mu}(z)=\gamma+\int_{\R}\frac{z+x}{1-xz}d\sigma(x).
\end{equation}

\begin{prop}
Let $\pi_*:\operatorname{ID}(\R,*)\rightarrow\operatorname{Meas}(\R)$ and $\pi_{\boxplus}:\operatorname{ID}(\R,\boxplus)\rightarrow\operatorname{Meas}(\R)$ be the projections onto the second factor of the characteristic pair, i.e. $\pi_i(\mu):=\pr_2(\gamma,\sigma)=\sigma$, $i=*,\boxplus$ where $\mu$ is represented by $(\gamma,\sigma)$. The fibre $\pi^{-1}_i(\sigma)$ is isomorphic to $\R$, and the Berkovi-Pata bijection gives a bundle map $\varphi_{\operatorname{BP}}:\operatorname{ID}(\R,*)\rightarrow \operatorname{ID}(\R,\boxplus)$, i.e. the following diagram 
\[
\begin{xy}
  \xymatrix{
 \operatorname{ID}(\R,*)\ar[dr]_{\pi_*}\ar[rr]_{\varphi_{\operatorname{BP}}} & &\operatorname{ID}(\R,\boxplus)\ar[dl]^{\pi_{\boxplus}} \\
       & \operatorname{Meas}(\R)&
               }
\end{xy}
\]
commutes. 
\end{prop}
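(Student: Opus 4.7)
The proposition is essentially a compatibility statement, so the plan is to reduce everything to the definition of the Berkovici--Pata bijection via characteristic pairs rather than to compute with transforms directly.

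First, I would recall from the hypothesis~(\ref{LKpairs}) that there exist bijections
\[
\varphi_{\mathrm{cp}}^{*}:\R\times\operatorname{Meas}(\R)\to\operatorname{ID}(\R,*),\qquad \varphi_{\mathrm{cp}}^{\boxplus}:\R\times\operatorname{Meas}(\R)\to\operatorname{ID}(\R,\boxplus),
\]
each sending a characteristic pair $(\gamma,\sigma)$ to the (classically or freely) infinitely divisible probability measure it parametrises. The projections are, tautologically, $\pi_*=\pr_2\circ(\varphi_{\mathrm{cp}}^{*})^{-1}$ and $\pi_{\boxplus}=\pr_2\circ(\varphi_{\mathrm{cp}}^{\boxplus})^{-1}$.

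Second, I would recall the standard content of the Berkovici--Pata bijection from~\cite{BPB,BNT}: $\varphi_{\mathrm{BP}}$ is defined precisely as the composite
\[
\varphi_{\mathrm{BP}}:=\varphi_{\mathrm{cp}}^{\boxplus}\circ(\varphi_{\mathrm{cp}}^{*})^{-1},
\]
i.e.\ as the map that fixes the characteristic pair $(\gamma,\sigma)$ and only reinterprets it from the classical Lévy--Khintchine side to the free one (whose $R$-transform is given by~(\ref{R-trafo_char_pair})). With this identification the commutativity of the triangle is immediate:
\[
\pi_{\boxplus}\circ\varphi_{\mathrm{BP}}=\pr_2\circ(\varphi_{\mathrm{cp}}^{\boxplus})^{-1}\circ\varphi_{\mathrm{cp}}^{\boxplus}\circ(\varphi_{\mathrm{cp}}^{*})^{-1}=\pr_2\circ(\varphi_{\mathrm{cp}}^{*})^{-1}=\pi_*.
\]

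For the fibres, once it is observed that $(\varphi_{\mathrm{cp}}^{i})^{-1}$ is a bijection, one has $\pi_i^{-1}(\sigma)=\varphi_{\mathrm{cp}}^{i}\bigl(\R\times\{\sigma\}\bigr)$, which is canonically in bijection with $\R$ via the first coordinate $\gamma$. This shows at once that both fibrations are $\R$-bundles and that $\varphi_{\mathrm{BP}}$ restricts to the identity on each fibre under the parametrisation by $\gamma$, so it is in fact a (trivial) isomorphism of $\R$-bundles over $\operatorname{Meas}(\R)$.

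The only nontrivial point is making sure the definition of $\varphi_{\mathrm{BP}}$ used here really coincides with the one appearing in the literature --- this is the step where one has to invoke~\cite{BPB} (or~\cite{BNT}, formula~(\ref{R-trafo_char_pair})) to identify the classical and free Lévy--Khintchine data. Once this identification is cited, the rest of the argument is purely diagrammatic, and no analytic estimate is needed.
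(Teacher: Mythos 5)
Your proof is correct and is exactly the argument the paper leaves implicit: the paper states this proposition without proof, relying on the fact (encoded in~(\ref{LKpairs}) and~(\ref{R-trafo_char_pair})) that the Berkovici--Pata bijection is by definition the map that preserves the characteristic pair, from which the commutativity of the triangle and the identification of the fibres with $\R\times\{\sigma\}\cong\R$ follow tautologically. So your proposal matches the paper's intended approach; the only substantive input, as you note, is the citation of~\cite{BPB,BNT} identifying $\varphi_{\operatorname{BP}}$ with $\varphi_{\operatorname{cp}}^{\boxplus}\circ(\varphi_{\operatorname{cp}}^{*})^{-1}$.
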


For $\mu,\nu\in\operatorname{ID}(\R,*)$ with classical cumulants $(c(\mu))_{n\in\N^*}$ and $(c(\nu))_{n\in\N^*}$, we define, equivalently to~(\ref{boxdot}), a binary operation $\star$ as follows:
\begin{equation}
\label{class_prod}
c(\mu\star\nu)_n:=c(\mu)_n\cdot c(\nu)_n,
\end{equation}
i.e. the two cumulant sequences are multiplied component-wise, and then the measure is obtained by the inverse Fourier transform of the exponential of the resulting sequence. 

We have:
\[
\begin{tabular}{l|r|l} distribution & parameters& classical cumulants   \\\hline 
Dirac & $a\in\R$ & (a,0,0,0,\dots) \\Normal & $m\in\R$, $\sigma^2\in\R^*_+$ & $(m,\sigma^2,0,0,\dots)$\\ 
Poisson & $\lambda\in\R_+^*$ & $(\lambda,\lambda,\lambda,\lambda,\dots)$ \end{tabular}
\]

\begin{prop}
The joint set of Dirac, normal and Poisson distributions form a commutative monoid for the operation $\star$, with unit the Poisson distribution with parameter $\lambda=1$.  
\end{prop}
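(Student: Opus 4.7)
The plan is to verify the monoid axioms by explicit inspection of how the operation $\star$ acts on the classical cumulant sequences of the three families, using the defining formula~(\ref{class_prod}) and the cumulant data recorded in the table immediately preceding the statement.

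First I would list the cumulant vectors: $c(\delta_a)=(a,0,0,\dots)$, $c(N(m,\sigma^2))=(m,\sigma^2,0,0,\dots)$, and $c(\mathrm{Poi}(\lambda))=(\lambda,\lambda,\lambda,\dots)$. Since $\star$ is defined by component-wise multiplication of cumulant sequences followed by the inverse Fourier transform, commutativity and associativity are immediate from the corresponding properties of the product in $\R^{\N^*}$; likewise the sequence $(1,1,1,\dots)=c(\mathrm{Poi}(1))$ is clearly a two-sided identity for the component-wise product, so the Poisson with $\lambda=1$ acts as unit. What actually requires verification is that the joint set is \emph{closed} under $\star$.

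For closure I would run through the (at most six, up to symmetry) binary combinations. Dirac$\star$Dirac gives $(ab,0,0,\dots)=c(\delta_{ab})$. Dirac$\star$Normal gives $(am,0,0,\dots)=c(\delta_{am})$ (the variance collapses). Dirac$\star$Poisson gives $(a\lambda,0,0,\dots)=c(\delta_{a\lambda})$. Normal$\star$Normal gives $(mm',\sigma^2{\sigma'}^2,0,0,\dots)=c(N(mm',\sigma^2{\sigma'}^2))$. Normal$\star$Poisson gives $(m\lambda,\sigma^2\lambda,0,0,\dots)=c(N(m\lambda,\sigma^2\lambda))$. Poisson$\star$Poisson gives $(\lambda\lambda',\lambda\lambda',\lambda\lambda',\dots)=c(\mathrm{Poi}(\lambda\lambda'))$. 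In every case the resulting sequence is the cumulant sequence of a member of one of the three families, so $\star$ restricts to a binary operation on the joint set.

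There is essentially no obstacle: the only point deserving a remark is that the three cumulant patterns $(\ast,0,0,\dots)$, $(\ast,\ast,0,0,\dots)$ and $(\ast,\ast,\ast,\dots)$ are \emph{stable} under Hadamard product, since multiplying a sequence whose tail is zero from position $k$ onwards annihilates everything from position $k$ onwards in the product, and the constant pattern of Poisson acts as a neutral pattern. This stability is what forces closure, and it is precisely what makes the Poisson-with-$\lambda=1$ sequence $(1,1,1,\dots)$ the unit. Once closure has been verified for the six cases above, commutativity, associativity and the identity property are inherited from $(\R^{\N^*},\cdot_{H})$, completing the proof.
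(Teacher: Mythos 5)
Your proposal is correct and follows exactly the route the paper intends: the paper states this proposition without an explicit proof, relying on the cumulant table and the component-wise definition of $\star$ in~(\ref{class_prod}), which is precisely the case-by-case Hadamard-product verification you carry out. Your closure check of the six combinations (noting that the resulting variances and rates stay strictly positive) and the observation that $(1,1,1,\dots)=c(\mathrm{Poi}(1))$ is the Hadamard unit supply the details the paper leaves implicit, so nothing is missing.
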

\begin{rem}
Let us note that in equality~(\ref{generating_function}), the pair $(s_1,\rho)$ also uniquely characterises $\mu\in M_{\infty,\boxplus}(\R)$. From expression~(\ref{R-trafo_char_pair}) we obtain the relation $\kappa_1(\mu)=\gamma$, and for the measures $\sigma$ and $\rho$
$$
\int_{\R}\frac{z+x}{1-xz}d\sigma(x)=\int_{\R}\frac{z}{1-xz}d\rho(x).
$$
\end{rem}

\begin{prop}
The following statements hold:
\begin{enumerate}
\item
The sets $\operatorname{ID}(\R,\ast)$ and $\operatorname{ID}(\R,\boxplus)$ are convex cones for the following operation: for $\alpha,\beta\geq0$ and $\mu,\nu\in\operatorname{ID}(\R,\bullet)$ with $\bullet=*,\boxplus$,  let
$$
\alpha\mu+\beta\nu:=\varphi^{-1}(\alpha\gamma_{\mu}+\beta\gamma_{\nu},\alpha\mu+\beta\nu)
$$
where $\varphi_{\operatorname{cp}}^{-1}$ is the inverse of the bijection~(\ref{LKpairs}) .
\item The normal and the semicircular distributions form isomorphic convex sub-cones of $\operatorname{ID}(\R,\bullet)$.
\item
$\operatorname{ID}(\R,\bullet)$, $\bullet=*,\boxplus$, carries a fibre-wise $\R$-action, which is given by 
$$
\delta_a.\mu:=\varphi_{\operatorname{cp}}^{-1}((\gamma+a,\rho)),
$$
where $a\in\R$ and $\varphi_{\operatorname{cp}}^{-1}$ is the compositional inverse of the bijection(\ref{LKpairs}).
\end{enumerate}
\end{prop}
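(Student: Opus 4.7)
The plan is to transfer all three structures through the characteristic-pair bijection $\varphi_{\operatorname{cp}}\colon \R\times\operatorname{Meas}(\R)\to\operatorname{ID}(\R,\bullet)$ from (\ref{LKpairs}). The parameter space $\R\times\operatorname{Meas}(\R)$ is itself a convex cone in a completely elementary way: the first factor is a real vector space, and finite positive Borel measures form a cone under ordinary scalar multiplication and addition of measures. So (1) reduces to checking that the stated definition pulls this cone structure consistently along $\varphi_{\operatorname{cp}}$, (2) to recognising the normal and semicircular families as the same distinguished sub-cone on the parameter side, and (3) to recognising the $\R$-action as translation in the first coordinate.

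For (1), I would first read the formula in the statement as
$$\alpha\mu+\beta\nu\;:=\;\varphi_{\operatorname{cp}}^{-1}\bigl(\alpha\gamma_\mu+\beta\gamma_\nu,\;\alpha\sigma_\mu+\beta\sigma_\nu\bigr),$$
where $(\gamma_\mu,\sigma_\mu)=\varphi_{\operatorname{cp}}^{-1}(\mu)$ and similarly for $\nu$, so that the symbols $\mu,\nu$ appearing in the second slot of the display are understood as their $\sigma$-components. For $\alpha,\beta\ge 0$ the right-hand pair lies in $\R\times\operatorname{Meas}(\R)$, hence $\alpha\mu+\beta\nu\in\operatorname{ID}(\R,\bullet)$. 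The cone axioms (associativity and commutativity of addition, compatibility with nonnegative scalar multiplication, neutrality of $\varphi_{\operatorname{cp}}^{-1}(0,0)$) then pull back componentwise from $\R$ and from the additive monoid of finite positive Borel measures.

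For (2), I identify the classical normal $\mathcal{N}(m,\sigma^2)$ with the characteristic pair $(m,\sigma^2\delta_0)$ and, using the integral formula (\ref{R-trafo_char_pair}), I identify the semicircular $\gamma_{a,r}$ with the free characteristic pair $(a,(r^2/4)\delta_0)$: substituting $\sigma=(r^2/4)\delta_0$ into (\ref{R-trafo_char_pair}) reproduces exactly the $R$-transform (\ref{Rsemicircle}). Both families therefore correspond to the sub-cone $\R\times\R_{+}\delta_0\subset\R\times\operatorname{Meas}(\R)$, a convex sub-cone isomorphic to $\R\times\R_{+}$. Since the Berkovici-Pata bijection $\varphi_{\operatorname{BP}}$ preserves the $\sigma$-coordinate (by the commutative triangle of the preceding proposition), it restricts to a cone isomorphism between normals and semicirculars. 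For (3) the verification is immediate: $\delta_0.\mu=\mu$ because $\gamma+0=\gamma$; $\delta_a.(\delta_b.\mu)=\delta_{a+b}.\mu$ by associativity in $\R$; and since $\pi_\bullet$ reads off only the $\sigma$-coordinate, which is untouched by the action, orbits lie inside the fibres. The step requiring the most care is checking that the recipe $\sigma=c\delta_0$ really does single out the normals on the classical side and the semicirculars on the free side uniformly under $\varphi_{\operatorname{BP}}$; once this is secured, the rest is a purely formal manipulation on the parameter space.
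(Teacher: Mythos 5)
The paper states this proposition without giving any proof, so there is nothing to compare against line by line; your argument fills that gap and is correct. Your key move --- reading the second slot of the defining formula as $\alpha\sigma_\mu+\beta\sigma_\nu$, i.e.\ the combination of the second components of the characteristic pairs rather than of the probability measures themselves --- is the right resolution of the paper's circular notation: it is exactly the reading that makes the cone structure on $\operatorname{ID}(\R,\boxplus)$ restrict to the one already introduced on $M_{\infty,\boxplus}(\R)$ via $\mu+_{\alpha,\beta}\nu=R^{-1}(\alpha R_\mu+\beta R_\nu)$, because by (\ref{R-trafo_char_pair}) scaling and adding $R$-transforms is componentwise scaling and adding of $(\gamma,\sigma)$. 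Given that $\varphi_{\operatorname{cp}}$ is a bijection onto $\operatorname{ID}(\R,\bullet)$ and that $\R\times\operatorname{Meas}(\R)$ is closed under nonnegative combinations, part (1) is then immediate, as you say. Your identification in (2) of the normal family with pairs $(m,\sigma^2\delta_0)$ (Kolmogorov form of the L\'evy--Khintchine representation, as in the cited Barndorff-Nielsen--Thorbj{\o}rnsen setup) and of the semicircular family with $(a,(r^2/4)\delta_0)$ via (\ref{R-trafo_char_pair}) and (\ref{Rsemicircle}) is correct, and already suffices: both sub-cones are isomorphic to $\R\times\R_+$, hence to each other. One small inaccuracy: you justify that $\varphi_{\operatorname{BP}}$ restricts to a cone isomorphism by citing only the preservation of the $\sigma$-coordinate from the commuting triangle; to have the induced map respect the first coordinate as well you should invoke that the Bercovici--Pata bijection is, by definition, the identity on the whole characteristic pair $(\gamma,\sigma)$ --- which is true and standard, so this is a matter of citing the right fact, not a gap. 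Part (3) is the routine verification you give; you could add the observation that the action coincides with $\mu\mapsto\delta_a\ast\mu$ (resp.\ $\delta_a\boxplus\mu$), which explains the notation, but it is not needed for the claim.
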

\section{Algebraic theories related to convolution of probability measures}
\subsection{Convex sets}

For $\mu,\nu\in M_c(\R)$ and $q\in[0,1]$ let $q\mu+(1-q)\nu$ be the usual {\bf $q$-convex linear combination} of measures. 
Then the moments satisfy:
$$
m_n(q\mu+(1-q)\nu)=qm_n(\mu)+(1-q) m_n(\nu),
$$ 
which follows from 
$$
\int_{\R} x^n d(q\mu+(1-q)\nu)=q\int_{\R}x^nd\mu+(1-q)\int_{\R}x^nd\nu.
$$
Hence the moment map $m$ is a morphism for convex linear combinations. 

Let us briefly recall the notion of convex sets, cf.~\cite{Fr,J}.
\begin{df}
\label{convex}
For every $p,q\in[0,1]$ we have a binary operation $+_q$ which satisfies the following rules:
\begin{enumerate}
\item $+_q(x,y)=+_{1-q}(y,x),$
\item $+_q(x,x)=x,$
\item $+_0(x,y)=y,$
\item $+_p(+_q(x,y),z)=+_{p+(1-p)q}(+_{\frac{p}{p+(1-p)q}}(x,y),z)$,\quad if~$p+(1-p)q\neq0$.
\end{enumerate}
\end{df} 

\begin{prop}
For $\mu,\nu\in M_{\infty,\boxplus}(\R)$ and $\alpha,\beta\in\R_+$, define the family of binary operations $+_{\alpha,\beta}$ as follows:
$$
\mu+_{\alpha,\beta}\nu:=R^{-1}(\alpha R_{\mu}+\beta R_{\nu}),
$$
which yields
$$
\mu+_{\alpha,\beta}\nu=\mu^{\boxplus\alpha}\boxplus\nu^{\boxplus\beta}.
$$
Therefore, $(M_{\infty,\boxplus}(\R),+_{\alpha,\beta})$ has the structure of a convex cone. 
\end{prop}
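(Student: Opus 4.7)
The plan is to transport the convex-cone structure from the $R$-transform side to the measure side, using that $R$ is an isomorphism of commutative monoids and that $\mathcal{R}_{\infty,\boxplus}(\R)$ is already known to be a convex cone.

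First I would verify that the right-hand side $\alpha R_\mu + \beta R_\nu$ lies in $\mathcal{R}_{\infty,\boxplus}(\R)$: this is exactly Lemma~\ref{convex_cones}~(2.), which states that $\mathcal{R}$ is stable under nonnegative linear combinations. Combined with the isomorphism $R:(M_{\infty,\boxplus}(\R),\boxplus)\to(\mathcal{R}_{\infty,\boxplus}(\R),+)$ of the earlier Proposition, this guarantees that $R^{-1}(\alpha R_\mu+\beta R_\nu)$ is a well-defined element of $M_{\infty,\boxplus}(\R)$, so the operation $+_{\alpha,\beta}$ is well-defined.

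Next I would identify $\mu+_{\alpha,\beta}\nu$ with $\mu^{\boxplus\alpha}\boxplus\nu^{\boxplus\beta}$. For $t\in\R_+$ and $\mu\in M_{\infty,\boxplus}(\R)$, freely infinite divisibility permits one to \emph{define} the fractional convolution power by $\mu^{\boxplus t}:=R^{-1}(tR_\mu)$, consistent with the integer case because $R$ linearises $\boxplus$. Then
$$
R(\mu^{\boxplus\alpha}\boxplus\nu^{\boxplus\beta})=R(\mu^{\boxplus\alpha})+R(\nu^{\boxplus\beta})=\alpha R_\mu+\beta R_\nu,
$$
and applying $R^{-1}$ gives the desired identification.

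Finally, I would verify the four convex-set axioms of Definition~\ref{convex}. The cleanest route is to normalise: for $q\in[0,1]$, put $+_q:=+_{q,1-q}$, so that $\mu+_q\nu=R^{-1}(qR_\mu+(1-q)R_\nu)$. Each of the four axioms (symmetry, idempotency, left neutrality of $+_0$, and the reparametrisation identity) then reduces under $R$ to the corresponding identity for ordinary convex combinations of elements in the convex cone $(\mathcal{R},+)$, where they hold trivially. The stability of $\mathcal{R}$ under all nonnegative scalar multiplications, again from Lemma~\ref{convex_cones}, gives the full cone structure (not merely the convex set). I do not expect any genuine obstacle: the only delicate point is the definition of $\mu^{\boxplus t}$ for real $t\geq 0$, which is standard on $M_{\infty,\boxplus}(\R)$ and is precisely what allows one to substitute \emph{arbitrary} $\alpha,\beta\in\R_+$ rather than only rationals or integers.
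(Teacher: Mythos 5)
Your proposal is correct and follows essentially the same route as the paper: the paper's (first) proof is precisely to invoke Lemma~\ref{convex_cones} together with the monoid isomorphism $R:(M_{\infty,\boxplus}(\R),\boxplus)\to(\mathcal{R}_{\infty,\boxplus}(\R),+)$, which is what you do, merely spelled out in more detail (well-definedness, the identification $\mu^{\boxplus t}=R^{-1}(tR_\mu)$, and the convex-set axioms). The paper also sketches a second proof via the integral representation~(\ref{R-trafo_char_pair}) of $R_\mu$ through characteristic pairs, which you do not need.
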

\begin{proof}
This follows from Lemma~\ref{convex_cones}. The second proof follows from the equality~(\ref{R-trafo_char_pair}).
\end{proof}
\begin{prop}
For $q\in[0,1]$, let $+_q:=+_{q,(1-q)}$. The operations $\boxplus$ and $\boxdot$ distribute over the addition $+_q$, i.e. for $\mu,\nu,\xi\in M_{\infty,\boxplus}(\R)$, we have:
\begin{eqnarray*}
(\mu+_q\nu)\boxplus\xi=(\mu\boxplus\xi)+_q(\nu\boxplus\xi),\\
(\mu+_q\nu)\boxdot\xi=(\mu\boxdot\xi)+_q(\nu\boxdot\xi).\\
\end{eqnarray*}
\end{prop}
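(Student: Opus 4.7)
The plan is to transport both identities through the $R$-transform, which is a bijection $R\colon M_{\infty,\boxplus}(\R)\to\mathcal{R}_{\infty,\boxplus}(\R)$ by Theorem~\ref{AnalyticChar}. Once on the side of $R$-transforms, the first identity becomes the associativity/commutativity of $+$ on the semigroup $(\mathcal{R},+)$ (Lemma~\ref{convex_cones}), while the second becomes the fact that the Hadamard product $\cdot_H$ is bilinear with respect to ordinary addition of power series. Injectivity of $R$ then concludes.

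Concretely, I would first translate the three ingredients: by definition of $+_q$ we have $R_{\mu+_q\nu}=qR_\mu+(1-q)R_\nu$, by the defining property of $R$ we have $R_{\mu\boxplus\xi}=R_\mu+R_\xi$, and by the definition of $\boxdot$ in equation~(\ref{boxdot}) we have $R_{\mu\boxdot\xi}=R_\mu\cdot_H R_\xi$. For the additive law, applying $R$ to both sides yields
$$R_{(\mu+_q\nu)\boxplus\xi}=qR_\mu+(1-q)R_\nu+R_\xi,$$
$$R_{(\mu\boxplus\xi)+_q(\nu\boxplus\xi)}=q(R_\mu+R_\xi)+(1-q)(R_\nu+R_\xi),$$
which coincide. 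For the multiplicative law, bilinearity of $\cdot_H$ gives
$$R_{(\mu+_q\nu)\boxdot\xi}=\bigl(qR_\mu+(1-q)R_\nu\bigr)\cdot_H R_\xi=q(R_\mu\cdot_H R_\xi)+(1-q)(R_\nu\cdot_H R_\xi),$$
and the right-hand side equals $R_{(\mu\boxdot\xi)+_q(\nu\boxdot\xi)}$ by the same translation. Injectivity of $R$ then returns the identities at the level of measures.

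The only genuine point to check, and the main potential obstacle, is well-definedness: one must confirm that every intermediate power series occurring in the argument is actually the $R$-transform of some element of $M_{\infty,\boxplus}(\R)$, so that $R^{-1}$ may legitimately be applied. For the convex combinations this is exactly Lemma~\ref{convex_cones} together with Theorem~\ref{AnalyticChar}, which shows that $\mathcal{R}_{\infty,\boxplus}(\R)$ is closed under nonnegative linear combinations. For the Hadamard products involving $\xi$ this is provided by the semi-ring theorem proved above, which establishes that $\boxdot$ is a well-defined, internal binary operation on $M_{\infty,\boxplus}(\R)$. Once these closure facts are invoked, the proof reduces to the purely formal verification above; no further analytic input is needed.
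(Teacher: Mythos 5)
Your proposal is correct and follows essentially the same route as the paper's proof: apply the $R$-transform, use $R_{\mu+_q\nu}=qR_\mu+(1-q)R_\nu$, $R_{\mu\boxplus\xi}=R_\mu+R_\xi$ and $R_{\mu\boxdot\xi}=R_\mu\cdot_H R_\xi$, and conclude by the bilinearity of $\cdot_H$ and injectivity of $R$. Your added remarks on well-definedness (closure of $\mathcal{R}_{\infty,\boxplus}(\R)$ under convex combinations and of $M_{\infty,\boxplus}(\R)$ under $\boxdot$) make explicit what the paper leaves implicit, but the argument is the same.
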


\begin{proof}
In order to show distributivity, we calculate:
$$
R\left((\mu+_q\nu)\boxplus\xi\right)=(qR_{\mu}+(1-q)R_{\nu})+R_{\xi}=q(R_{\mu}+R_{\xi})+(1-q)(R_{\nu}+R_{\xi}),
$$
and
$$
R\left((\mu+_q\nu)\boxdot\xi\right)=(qR_{\mu}+(1-q)R_{\nu})\cdot_HR_{\xi}=q(R_{\mu}\cdot_HR_{\xi})+(1-q)(R_{\nu}\cdot_HR_{\xi}).
$$
\end{proof}

Let us point out that we have to distinguish between operations defined on moments and cumulants and in general we can not mix the two, as the following example shows, cf.~[\cite{NS} Remarks 12.10 (2)]. For $\mu:=\frac{1}{2}(\delta_{-1}+\delta_{+1})$, with the convex combination taken for moments, we have 
$$
\frac{1}{2}(\delta_{-1}+\delta_{+1})\boxplus\mu\neq\frac{1}{2}(\delta_{-1}\boxplus\mu)+\frac{1}{2}(\delta_{+1}\boxplus\mu).
$$
Let $S$ be a semi-ring and $G_S$ be the functor
$$
G_S:\mathbf{Set}\rightarrow\mathbf{Set}
$$
such that for $X\in\mathbf{Set}$, we have 
$$
G_S(X):=\left\{\varphi:X\rightarrow S|~\text{$\varphi(x)=0$ for almost all $x\in X$, and}\sum_{x\in X}\varphi(x)=1\right\}.
$$
Then every $\varphi\in G_S(X)$ has a representation
$$
\varphi=s_1x_1+\cdots+s_nx_n,
$$
with $s_i\in S$, $x_i\in X$, $i=1,\dots,n$, and where $s_i=0$ is possible and the $x_i$ are not necessarily different, i.e. we allow also non-reduced formal sums, which can be considered as special $S$-valued divisors. 
\begin{prop}[\cite{J,Fr}]
Let $S$ be a commutative unital semiring. The {\bf distribution} or {\bf finite Giry monad} $(G_S,\eta,\mu)$ with coefficients in $S$, is given by: 
\begin{eqnarray*}
\eta_X&:&X\rightarrow G_S(X),\\
x&\mapsto& 1_Sx,\\
\mu_X&:&G_S^2:=G_S\circ G_S\rightarrow G_S,\\
\mu_X\left(\sum_{i=1}^n s_i\varphi_i\right)&:=&x\mapsto\sum_{i=1}^n s_i\varphi_i(x)
\end{eqnarray*}
\end{prop}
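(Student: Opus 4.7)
The plan is to verify directly the three monad axioms (associativity and the two unit laws), together with the naturality of $\eta$ and $\mu$ and the functoriality of $G_S$. The main ingredient is the assumption that $S$ is a commutative unital semiring, which provides distributivity of multiplication over the (finite) sums occurring in the support of the elements of $G_S(X)$.

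First I would make the functorial action explicit: for $f:X\to Y$ and $\varphi\in G_S(X)$, define
\[
G_S(f)(\varphi)(y):=\sum_{x\in f^{-1}(y)}\varphi(x),
\]
a finite sum since $\varphi$ has finite support. One checks that $G_S(f)(\varphi)$ lies in $G_S(Y)$: its support is contained in $f(\operatorname{supp}(\varphi))$ hence is finite, and $\sum_y G_S(f)(\varphi)(y)=\sum_x\varphi(x)=1_S$ after interchanging the two finite sums. Functoriality $G_S(\id)=\id$ and $G_S(g\circ f)=G_S(g)\circ G_S(f)$ is then an elementary reorganization of finite sums.

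Next I would verify naturality. For $\eta$, both $G_S(f)\circ\eta_X$ and $\eta_Y\circ f$ send $x$ to the Dirac-type function $1_S\cdot f(x)$, so naturality is immediate. For $\mu$, given $\Phi=\sum_i s_i\varphi_i\in G_S^2(X)$, one computes on the one hand $G_S(f)(\mu_X(\Phi))(y)=\sum_{x\in f^{-1}(y)}\sum_i s_i\varphi_i(x)$, and on the other hand $\mu_Y(G_S^2(f)(\Phi))(y)=\sum_i s_i\big(\sum_{x\in f^{-1}(y)}\varphi_i(x)\big)$; these agree by distributivity in $S$ and commutativity of finite sums.

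For the unit axioms, the left unit $\mu_X\circ\eta_{G_S(X)}=\id$ is immediate because $\eta_{G_S(X)}(\varphi)=1_S\cdot\varphi$, so $\mu_X$ returns $x\mapsto 1_S\cdot\varphi(x)=\varphi(x)$. The right unit $\mu_X\circ G_S(\eta_X)=\id$ requires unwinding: $G_S(\eta_X)(\varphi)$ is the element of $G_S^2(X)$ supported on Diracs $1_S\cdot x$ with coefficient $\varphi(x)$, and applying $\mu_X$ then evaluates to $\varphi$ pointwise. Finally, the main obstacle (the one nontrivial computation) is the associativity axiom $\mu_X\circ G_S(\mu_X)=\mu_X\circ\mu_{G_S(X)}$. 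I would expand both sides on a typical $\Psi=\sum_k t_k\Phi_k\in G_S^3(X)$ with $\Phi_k=\sum_i s_{k,i}\varphi_{k,i}$, and show that both sides evaluate at $y\in X$ to $\sum_{k,i}t_k s_{k,i}\varphi_{k,i}(y)$. This uses associativity and commutativity of multiplication in $S$ together with distributivity over finite sums; it is precisely here that the semiring structure of $S$ is genuinely needed, whereas the other steps only use its monoid structures. A final remark is that $G_S$ generalises the classical finite distribution monad, recovered when $S=\R_{\geq 0}$, and the above verification follows the same pattern as in that case.
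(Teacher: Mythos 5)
Your verification is correct, and it is worth noting that the paper itself gives no proof of this proposition: it is quoted from Jacobs and Fritz, so your direct check of functoriality, naturality of $\eta$ and $\mu$, the unit laws and associativity is precisely the argument delegated to those references, and your identification of associativity as the only step genuinely using distributivity over finite sums is accurate (commutativity of $S$ is in fact not needed for the bare monad structure, only the unital semiring axioms). Two small points you should add for completeness: first, that $\mu_X(\Phi)$ really lands in $G_S(X)$, i.e.\ its support is finite (contained in $\bigcup_i \operatorname{supp}(\varphi_i)$) and its total mass is $\sum_i s_i\sum_x\varphi_i(x)=\sum_i s_i=1_S$, using that $\Phi\in G_S^2(X)$ is itself normalised; second, that the formula for $\mu_X$ is independent of the chosen (possibly non-reduced) representation $\Phi=\sum_i s_i\varphi_i$, which holds because the assignment is $S$-linear in the coefficients --- the paper explicitly allows non-reduced formal sums, so this well-definedness remark is needed to match its conventions.
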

\begin{prop}
$(M_{\infty,\boxplus}(\R),+_q, q\in[0,1])$ is an algebra for the finite Giry / distribution monad, with $\alpha:G_{\R_+}(M_{\infty,\boxplus}(\R))\rightarrow M_{\infty,\boxplus}(\R)$ given by:
$$ 
\alpha(\lambda_1\mu_1+\cdots\lambda_n x_n):=\mu_1+_{q_1}(\mu_2+_{\mu_2}(\cdots +_{q_{n-2}}(\mu_{n-1}+_{q_{n-1}}\mu_n)))
$$
for $\mu_i\in M_{\infty,\boxplus}(\R)$, $\lambda_i\in\R_+$, $\sum_{i=1}^n\lambda_i=1$, $q_1:=\lambda_1$ and $q_i:=\frac{q_i}{1-q_1-\dots-q_{i-1}}$, $i=1,\dots, n$.
\end{prop}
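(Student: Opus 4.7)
The strategy is to exploit the $R$-transform isomorphism $R:(M_{\infty,\boxplus}(\R),\boxplus)\to(\mathcal{R},+)$ together with the fact, established in Lemma~\ref{convex_cones}, that $\mathcal{R}$ is a convex cone inside a real vector space. By construction $+_q = +_{q,1-q}$ is transported from the standard linear convex combination on $\mathcal{R}$; i.e.\ $\mu+_q\nu = R^{-1}\bigl(qR_\mu+(1-q)R_\nu\bigr)$. It therefore suffices to verify the axioms of Definition~\ref{convex} in the ambient linear structure on $\mathcal{R}$ and then invoke the well-known correspondence between convex sets (in the sense of Definition~\ref{convex}) and algebras for the finite distribution monad $G_{\R_+}$.

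\textbf{Step 1: convex-set axioms.} Symmetry, idempotency, and left-neutrality follow respectively from $qR_\mu+(1-q)R_\nu = (1-q)R_\nu+qR_\mu$, $qR_\mu+(1-q)R_\mu = R_\mu$, and $0\cdot R_\mu+1\cdot R_\nu = R_\nu$. The parametric associativity in axiom (4) is the standard re-bracketing of a three-term convex combination: applying $R$ to both sides reduces the identity to the equality
$$
p\bigl(qR_\mu+(1-q)R_\nu\bigr)+(1-p)R_\xi \;=\; \bigl(p+(1-p)q\bigr)\!\left(\tfrac{p}{p+(1-p)q}R_\mu+\tfrac{(1-p)q}{p+(1-p)q}R_\nu\right)+(1-p-(1-p)q)R_\xi,
$$
which holds in the ambient vector space and hence on the convex cone $\mathcal{R}$.

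\textbf{Step 2: the algebra map.} Using axiom (4) inductively (with the prescribed $q_i$ chosen precisely so that the telescoping weights come out right), one sees that the nested expression defining $\alpha(\lambda_1\mu_1+\cdots+\lambda_n\mu_n)$ equals $R^{-1}\!\bigl(\sum_{i=1}^n\lambda_i R_{\mu_i}\bigr)$. Since $\sum_i\lambda_i = 1$ and $\mathcal{R}$ is a convex cone, this barycentre lies in $\mathcal{R}$ and its $R$-preimage is a well-defined element of $M_{\infty,\boxplus}(\R)$, independent of how the formal sum is enumerated. The unit law $\alpha\circ\eta = \id$ then reduces to $R^{-1}(1\cdot R_\mu)=\mu$, and the multiplication law $\alpha\circ G_{\R_+}(\alpha) = \alpha\circ\mu_{G}$ reduces, after transport by $R$, to the elementary identity
$$
\sum_{i}\lambda_i\Bigl(\sum_{j}\lambda_{ij}R_{\mu_{ij}}\Bigr) \;=\; \sum_{i,j}(\lambda_i\lambda_{ij})R_{\mu_{ij}},
$$
which is the defining property of the multiplication of $G_{\R_+}$ on convex combinations.

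\textbf{Main obstacle.} Verification of the four convex-set axioms is routine; the only delicate point is establishing that the iterated binary expression in the definition of $\alpha$ really computes the barycentre $\sum_i\lambda_iR_{\mu_i}$ regardless of enumeration. Once this is in place (an induction on $n$ using axiom (4) and symmetry), both the unit and associativity axioms for a Giry algebra follow by pure linearity, since Lemma~\ref{convex_cones} guarantees that the convex combinations never leave the cone $\mathcal{R}$ and hence admit well-defined $R$-preimages in $M_{\infty,\boxplus}(\R)$.
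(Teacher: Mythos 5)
Your overall route is the same as the paper's: the paper first establishes, via the $R$-transform and Lemma~\ref{convex_cones}, that the operations $+_q$ give $M_{\infty,\boxplus}(\R)$ the structure of a convex set, and then settles the proposition by citing Jacobs's Theorem~4 in~\cite{J}, which identifies convex sets with algebras for the finite distribution monad. You simply unpack that citation: you verify the convex-set axioms after transport by $R$, check that the telescoping weights $q_i=\lambda_i/(1-\lambda_1-\cdots-\lambda_{i-1})$ make the nested expression compute $R^{-1}\bigl(\sum_i\lambda_iR_{\mu_i}\bigr)$, and reduce the unit and multiplication laws of a $G_{\R_+}$-algebra to linear identities in the cone $\mathcal{R}$. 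In outline this is sound and is in substance the same argument the paper delegates to~\cite{J}.

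There is, however, one step that fails as written: the displayed identity in your Step~1 is not an identity in a vector space, so the claim that it ``holds in the ambient vector space'' is wrong. Take $p=\tfrac12$, $q=0$: your left-hand side is $\tfrac12 R_\nu+\tfrac12 R_\xi$, while your right-hand side is $\tfrac12 R_\mu+\tfrac12 R_\xi$. You have transported the parametric associativity axiom with the nesting on the wrong side (the printed form of Definition~\ref{convex}(4) contains the same slip); the correct axiom, as in Jacobs, is $+_p\bigl(x,+_q(y,z)\bigr)=+_{p+(1-p)q}\bigl(+_{\frac{p}{p+(1-p)q}}(x,y),z\bigr)$, whose $R$-transported form $pR_\mu+(1-p)\bigl(qR_\nu+(1-q)R_\xi\bigr)=\bigl(p+(1-p)q\bigr)\bigl(\tfrac{p}{p+(1-p)q}R_\mu+\tfrac{(1-p)q}{p+(1-p)q}R_\nu\bigr)+(1-p)(1-q)R_\xi$ is indeed a consequence of linearity. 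With that correction (and a word about the degenerate case $1-\lambda_1-\cdots-\lambda_{i-1}=0$, where the remaining weights vanish and the corresponding $q_i$ need not be defined), your Step~2 and the monad-algebra laws go through as you describe.
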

\begin{proof}
The statement follows from~[\cite{J} Theorem 4].
\end{proof}
\subsection{The convolution algebra }
We introduce, following the notation in~\cite{MacL}, an algebraic theory $(\Omega,E)$ consisting of a graded set $\Omega$ of operators and $E$ a set of identities which together  define an $\langle\Omega,E\rangle$-algebra.

\begin{df}
The {\bf (partial) convolution algebra} is given by the following generators $\Omega$ and relations $E$:
\begin{itemize}
\item $\Omega(0):=\{0,1\}\times\R$, nullary operations,
\item $\Omega(1):=\{\id,\operatorname{inv},\partial,f_n\}$, $n\in\N$, unary operations, 
\item $\Omega(2):=\{+,\cdot,+_q\}$, $q\in\Delta^1$, where $\Delta^1$ denotes the geometric one-simplex, binary operations.
\end{itemize}
These satisfy the following relations $E$:
\begin{enumerate}
\item $(+,\cdot,\mathbf{0}:=(0,0),\mathbf{1}:=(1,1))$ forms a commutative semi-ring with additive unit $\mathbf{0}$ and multiplicative unit $\mathbf{1}$,
\item $+_q$ $q\in[0,1]$ satisfies the properties of Definition~\ref{convex},
\item $\partial$ is an endomorphism for $+,\cdot$ and $+_q$, $q\in[0,1]$,
\item $f_n$ $n\in\N^*$ is an endomorphism for $\cdot$.
\end{enumerate}
\end{df}

\begin{rem} 
\begin{enumerate}
\item We call it ``partial" because there exist further operations, however we intended to capture those which we have encountered so-far.
\item
The above algebraic structure is not operadic, as its definition also involves co-operations. 
However, it contains several sub-operads, e.g. Definition~\ref{convex}, without property $2.$ defines an operad, cf. the remark by T.~Leinster quoted in~[\cite{Fr} p.~6]. 
\item
The derived $n$-ary operations also contain the standard $n-1$-simplex as label.
\end{enumerate}
\end{rem}
\begin{prop}
For every $A\in\mathbf{cAlg}_k$, 
$(A^{\N^*},\text{$+,\cdot$ point-wise}, +_q, q\in[0,1],\partial, f_n, n\in\N^*)$ is a partial convolution algebra which is furthermore a commutative unital ring for $(+,\cdot)$-point-wise.
\end{prop}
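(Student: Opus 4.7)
The plan is to verify each of the four axioms of the partial convolution algebra pointwise, exploiting the fact that every operation in the list is defined coordinatewise on $A^{\N^*}$, so each axiom reduces to a scalar identity that already holds in the commutative $k$-algebra $A$. Concretely, I will define $\partial(a_1,a_2,a_3,\dots):=(a_2,a_3,\dots)$, $f_n(a_j)_{j\in\N^*}:=(a_j^n)_{j\in\N^*}$ (as in equation~(\ref{Frob_map})), and $\underline{a}+_q\underline{b}:=(qa_j+(1-q)b_j)_{j\in\N^*}$, where $q\in[0,1]$ is interpreted through the structure map $k\to A$ (the case of interest being $k\supset\Q$, which suffices because all identities below involve rational combinations).

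First I would check that $(A^{\N^*},+,\cdot,\mathbf{0},\mathbf{1})$, with $\mathbf{0}=(0,0,\dots)$ and $\mathbf{1}=(1,1,\dots)$, is a commutative unital ring: associativity, commutativity, distributivity, and the unit axioms all follow coordinatewise from the corresponding identities in $A$. This gives relation $1.$ in the definition (and in fact the stronger statement of being a ring, not just a semi-ring).

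Next I would verify the four convex-combination axioms of Definition~\ref{convex} for $+_q$. Each axiom is a polynomial identity in the scalar $q$, so it is enough to check it coordinatewise: $qa+(1-q)b=(1-q)b+qa$, $qa+(1-q)a=a$, $0\cdot a+1\cdot b=b$, and the reparametrisation identity
\[
p\bigl(qa+(1-q)b\bigr)+(1-p)c=\bigl(p+(1-p)q\bigr)\!\left(\tfrac{p}{p+(1-p)q}a+\tfrac{(1-p)q}{p+(1-p)q}b\right)+\bigl(1-p-(1-p)q\bigr)c,
\]
each of which is a straightforward algebraic manipulation in $A$.

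Finally I would verify the endomorphism properties. For $\partial$: since shift commutes with pointwise operations,
\[
\partial(\underline{a}+\underline{b})=\partial(\underline{a})+\partial(\underline{b}),\qquad \partial(\underline{a}\cdot\underline{b})=\partial(\underline{a})\cdot\partial(\underline{b}),\qquad \partial(\underline{a}+_q\underline{b})=\partial(\underline{a})+_q\partial(\underline{b}),
\]
are immediate from reading off the $j$-th coordinate, which is just the $(j{+}1)$-st coordinate of the input. For $f_n$: commutativity of $A$ gives $(a_jb_j)^n=a_j^nb_j^n$, so $f_n(\underline{a}\cdot\underline{b})=f_n(\underline{a})\cdot f_n(\underline{b})$ coordinatewise. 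There is no genuine obstacle; the only mild subtlety is justifying the scalar $q\in[0,1]$ in the base $k$, which I would address by noting that we may assume $k\supseteq\Q$ (char.\ zero) and all required identities are polynomial in $q$, so the statement holds for every $A\in\mathbf{cAlg}_k$ once $+_q$ is interpreted through the structure map. Collecting these four verifications yields the partial convolution algebra structure, and the ring statement is the first step already established.
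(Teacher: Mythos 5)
Your proof is correct and is exactly the routine coordinatewise verification that the paper leaves implicit (it states this proposition without proof): every listed operation is defined componentwise, so each axiom reduces to an identity in the commutative unital ring $A$. Your flag about interpreting $q\in[0,1]$ through the structure map $k\to A$ is a fair observation about an imprecision in the paper's own setup rather than a gap in your argument.
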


\begin{thm}
$(M_{\infty,\boxplus}(\R), \boxplus,\boxdot,\delta_0,\nu_{\infty,1,1}, \hat{\partial}^2,\mathbf{f}_n, n\in\N^*)$, $\mathbf{f}_n$ as in~(\ref{Frob_map}) and with $\hat{\partial}^2:=R^{-1}\circ\partial^2\circ R$, has the structure of an algebra over the set operad generated by $\Omega(0),\Omega(1)$ and $+,\cdot$. 
$(M_{\infty,\boxplus}(\R), \boxplus,\boxdot,+_q, q\in[0,1], \hat{\partial}^2,f_n,n\in\N)$ is an $(\Omega,E)$-algebra.
\end{thm}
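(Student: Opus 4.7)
The plan is to transport the entire structure through the $R$-transform, which by the previous results identifies $(M_{\infty,\boxplus}(\R),\boxplus,\boxdot)$ with its image inside $(\R^{\N^*},+,\cdot_H)$ where every generator of $(\Omega,E)$ already acts componentwise. Under this correspondence $\boxplus\leftrightarrow+$, $\boxdot\leftrightarrow\cdot_H$, $+_q\leftrightarrow q\cdot+(1-q)\cdot$, $\hat{\partial}^2\leftrightarrow\partial^2$, and $\mathbf{f}_n\leftrightarrow$ componentwise $n$-th power on the free cumulant sequence. Once each of these operations is shown to preserve $M_{\infty,\boxplus}(\R)$, the required identities become elementary componentwise identities in $\R^{\N^*}$.

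First I would verify stability of $M_{\infty,\boxplus}(\R)$ under all the operations. Stability under $\boxplus,\boxdot$ and the constants $\delta_0,\nu_{\infty,1,1}$ is the content of the preceding semi-ring theorem; stability under $+_q$ for $q\in[0,1]$ is contained in Lemma~\ref{convex_cones} combined with the identification of convex combinations of $R$-transforms with convex combinations of measures via the characteristic pair formula~(\ref{R-trafo_char_pair}); stability under $\hat{\partial}^2$ is exactly Lemma~\ref{shifted_cummulants}; and stability under $\mathbf{f}_n$ follows because the sequence $(\kappa_j(\mu)^n)_{j\in\N^*}$ is conditionally positive definite by iterated application of the Schur product theorem (as used in the semi-ring proof for $\boxdot$) and exponentially bounded since $|\kappa_j(\mu)^n|\leq (C_\mu^n)^j$, so Proposition~\ref{Char_infinitely_div} produces a measure in $M_{\infty,\boxplus}(\R)$ realising it.

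Second I would check each axiom of $E$ by pulling back along $R$: the semi-ring axioms are the previous theorem, the convex-combination axioms of Definition~\ref{convex} hold termwise in $\R$ and are then the content of the $+_q$-proposition, the endomorphism property of $\hat{\partial}^2$ with respect to $\boxplus,\boxdot,+_q$ reduces by conjugation to the trivial fact that the shift $\partial^2$ on $\R^{\N^*}$ commutes with pointwise addition, Hadamard product, and componentwise scalar combinations, and the endomorphism property of $\mathbf{f}_n$ for $\boxdot$ reduces to $(a_jb_j)^n=a_j^nb_j^n$ (this is the proposition preceding this theorem). The algebra over the free set operad generated by $\Omega(0),\Omega(1)$ and $\{+,\cdot\}$ (the first assertion) is then immediate: each generator acts on $M_{\infty,\boxplus}(\R)$ and composition of well-defined operations is well defined, so no relations need to be checked for the operadic part.

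The only non-routine steps are the closure statements for $\hat{\partial}^2$ and $\mathbf{f}_n$: the former genuinely uses Lemma~\ref{shifted_cummulants}, and the latter requires that iterated Hadamard powers of a conditionally positive definite, exponentially bounded sequence remain so. Everything else is a termwise transcription through $R$, and the compatibility of the convex-combination operations $+_q$ with $\boxplus$ and $\boxdot$ (needed implicitly in axiom~$3$ applied to $\hat{\partial}^2$) is inherited from the distributivity proposition proved just before. Hence the two assertions of the theorem reduce to the stability lemmas of the previous section plus the identification of operations under $R$.
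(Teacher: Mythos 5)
Your proposal is correct and follows essentially the route the paper intends: the paper gives no separate proof of this theorem, treating it as an assembly of the preceding results (the semi-ring theorem for $\boxplus,\boxdot$, Lemma~\ref{convex_cones} and the $+_q$ propositions, Lemma~\ref{shifted_cummulants} for $\hat{\partial}^2$, and the $\mathbf{V}$/$\mathbf{f}_n$ endomorphism propositions), exactly as you do by transporting everything componentwise through the $R$-transform. Your closure argument for $\mathbf{f}_n$ via iterated Schur products (equivalently $\mathbf{f}_n(\mu)=\mu^{\boxdot n}$) matches the paper's justification that this operation is well defined.
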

\begin{rem}
\begin{itemize}
\item The notation $\hat{\partial}^2$ emphasises the fact that in order to have the décalage operator acting properly on measures we have to take its square, i.e. it is elliptic. 

\item By using e.g. the Berkovici-Pata-bijection or checking the properties directly, we can transfer this structure of a convolution algebra to the classical and boolean case, i.e. it applies also to the set $\operatorname{ID}(\R,*)$.
\end{itemize}
\end{rem}
\subsection*{Acknowledgements}
The author thanks the following people: John McKay for the discussions and his continuous interest, G. Cébron for the previous discussions and the helpful comments he made on a early version of the article and Roland Speicher for numerous inspiring discussions, his comments and questions and his support. The MPI in Bonn, he would like to thank for its hospitality.


\begin{thebibliography}{00}
\bibitem{AA2016} M. Anshelevich, O. Arizmendi, {\it The Exponential Map in Non-commutative Probability.}, Int Math Res Notices 2016 rnw164. doi: 10.1093/imrn/rnw164
Published: 04 August 2016
\bibitem{BNT} O. Barndorff-Nielsen, S. Thorbjørnsen, {\it Classical and Free Infinite Divisibility and Lévy Processes, Quantum Independent Increment Processes II}, LNM, vol. {\bf 1866}, Springer Berlin, pp. 33-159, 2005
\bibitem{Bel} L. Bélair, {\it Les vecteurs de Witt et l'algèbre universelle, d'après Joyal}, Séminare de structure algèbriques ordonnées, Univ. Paris 7, le 16 mars 2010
\bibitem{BPB} H. Bercovici, V. Pata, and Ph. Biane, {\it Stable laws and domains of attraction in free probability theory}, Annals of Mathematics {\bf 149} (1999), no. 3, 1023–1060.
\bibitem{BV} H. Bercovici, D. Voiculescu, {\it Lévy-Hin\u{c}in type theorems for multiplicative and additive free convolution}, Pacific J. Math. {\bf 153}, no. 2, 217-248, (1992)
\bibitem{C2014} G. Cébron, {\it Matricial model for the free multiplicative convolution} Ann. Probab. vol. {\bf 44}, Number 4 (2016), 2427-2478.

\bibitem{FrdMcK}
D. Ford, J. McKay, {\it Monstrous Moonshine - two footnotes}, Third spring conference 
``Modular Forms and Related Topics", Hamamatsu, 56-61, (2004)

\bibitem{FMcK2011}
R. Friedrich, J. McKay, {\it Free Probability Theory and Complex Cobordism},
C. R. Math. Rep. Acad. Sci. Canada Vol. {\bf 33} (4), pp. 116-122 (2011)

\bibitem{FMcK2012}
R. Friedrich, J. McKay, {\it Formal Groups, Witt vectors and Free Probability}, preprint (2012)

\bibitem{FMcK2013a}
R. Friedrich, J. McKay, {\it The $S$-transform in arbitrary dimensions}, preprint (2013)

\bibitem{FMcK2013}
R. Friedrich, J. McKay, {\it Almost Commutative Probability Theory}, preprint (2013)

\bibitem{FMcK2015}
R. Friedrich and J. McKay, {\it Homogeneous Lie Groups and Quantum Probability},\\ arXiv:1506.07089 [math.RT] 2015

\bibitem{Fr} T. Fritz, {\it Convex Spaces I: Definition and Examples}, arXiv:0903.5522 v3 (2009)

\bibitem{GS}
J. Galambos, I. Simonelli, {\it Products of Random Variables, Applications to Problems of Physics and to Arithmetic Functions}, Pure and Applied Mathematics, Marcel Dekker, New York, (2004)

\bibitem{GaSt}
R. Garner, R. Street, {\it Coalgebras governing both weighted Hurwitz products and their pointwise transforms}, Bull. Belg. Math. Soc. Simon Stevin
Volume {\bf 23}, Number 5 (2016), 643-666. 

\bibitem{GK}
L. Guo, W. Keigher, {\it On differential Rota-Baxter algebras}, Journal Pure and Applied Algebra, {\bf 212}, 522-540, 2008

\bibitem{I1979} L. Illusie, {\it Complexe de de Rham-Witt et cohomologie cristalline}, Ann. scien. de l'É.N.S. $4^e$ série, tome {\bf 12}, $n^o$ 4, (1979), p. 501-661 

\bibitem{J} B. Jacobs, {\it Convexity, Duality and Effects}, Theoretical Computer Science: 6th IFIP TC 1/WG 2.2 International Conference, TCS 2010, Held as Part of WCC 2010, Brisbane, Australia, September 20-23, 2010. Proceedings, Springer, p. 1-19, 2010

\bibitem{H} M. Hazewinkel, {\it Witt vectors. Part 1}, revised version: 20 April 2008

\bibitem{Hes} L. Hesselholt, {\it Lecture notes on Witt vectors}, available from http://www.math.nagoya-u.ac.jp/${\sim}$larsh/papers/s03/wittsurvey.pdf, June 11, 2008

\bibitem{Joy1} A. Joyal, {\it $\delta$-Anneaux et Vecteurs de Witt}, C.R. Math. Rep. Acad. Sci. Canada, vol. VII, No. {\bf 3}, 1985

\bibitem{Kle} A. Klenke, {\it Probability Theory; A Comprehensive Course}, 2nd edition, Springer 2013 

\bibitem{MacL} S. Mac Lane, {\it Categories for the Working Mathematician}, 2nd edition, Springer 1997

\bibitem{MN} 
M. Mastnak, A. Nica, {\it  ALGEBRAS AND THE LOGARITHM OF THE $S$-TRANSFORM IN FREE PROBABILITY}, Trans. AMS, Vol. {\bf 362}, Nr. 7, July 2010, Pages 3705-3743 

\bibitem{MR} N. Metropolis, G-C. Rota, {\it Witt Vectors and the Algebra of Necklaces}, Adv. in Math. {\bf 50}, 95-125 (1983)

\bibitem{MuB} D. Mumford, {\it Lectures on Curves on an Algebraic Surface.},(with a section by G.M. Bergman) {\bf (AM-59)}, Ann. Math. Studies, Princeton University Press, 1966

\bibitem{NS}
 A. Nica, R. Speicher, {\it Lectures on the Combinatorics of Free
Probability}, LMS LNS {\bf 335}, Cambridge University Press, (2006)

\bibitem{S1997}
R. Speicher, {\it Free probability theory and non-crossing partitions},
S{\' e}minaire Lotharingien de Combinatoire, {\bf B39c}, 38pp., (1997)

\bibitem{V1985}
  D.V. Voiculescu, {\it Symmetries of some reduced free product $C^*$-algebras, Operator Algebras and Their Connections with Topology and Ergodic Theory}, Lecture Notes in Mathematics, vol. {\bf 1132}, Springer Verlag, pp. 556-588, (1985)

\bibitem{V1986}
  D.V. Voiculescu, {\it Addition of certain non-commuting random
variables}, J. Funct. Anal. {\bf 66}, 323-335, (1986)


\bibitem{V1987}
  D.V. Voiculescu, {\it Multiplication of certain non-commuting random
variables}, J. Operator Theory, {\bf 18}, 223-235, (1987)

\bibitem{VDN}
  D.V. Voiculescu, K.J. Dykema, A. Nica {\it Free random variables: a noncommutative probability approach to free products with applications to random matrices, operator algebras, and harmonic analysis on free groups}, AMS, (1992)

\bibitem{Y} D. Yau, {\it Lambda-Rings}, World Scientific, 2010

\bibitem{ZGK}
Sh. Zhang, L. Guo, W. Keigher, {\it Monads and distributive laws for Rota-Baxter and differential algebras}, Advances in Applied Mathematics, {\bf 72}, 139-165, (2016)

\end{thebibliography}
\end{document}